\theoremstyle{theorem}
\newtheorem {theo}{Theorem}[section]
\newtheorem*{theo*}{Theorem}
\newtheorem {lemme}[theo]{Lemma}
\newtheorem*{lemme*}{Lemma}
\newtheorem {prop}[theo]{Proposition}
\newtheorem*{prop*}{Proposition}
\newtheorem {cor}[theo]{Corollary}
\newtheorem*{cor*}{Corollary}
\newtheorem*{cor_proof*}{Corollary (of the proof)}
\newtheorem*{conjecture*}{Conjecture}
\theoremstyle{definition}
\newtheorem {defi}[theo]{Definition}
\newtheorem*{defi*}{Definition}
\newtheorem {nota}[theo]{Notation}
\newtheorem*{nota*}{Notation}
\theoremstyle{remark}
\newtheorem {remarque}[theo]{Remark}
\newtheorem*{remarque*}{Remark}
\newtheorem*{warning*}{Warning}
\newtheorem*{remarques*}{Remarks}
\newtheorem*{warnings*}{Warnings}
\newtheorem*{convention*}{Convention}
\newtheorem*{exemple*}{Example}
\newtheorem*{exemples*}{Examples}
\newtheorem*{question*}{Question}
\newtheorem*{questions*}{Questions}
\newtheorem*{fact*}{Fact}
\newtheorem*{acknowledgments}{Acknowledgments}
\def\N{{\mathds N}}
\def\R{{\mathds R}}
\def\RR{{\mathcal R}}
\def\RRR{{\textrm R}}
\def\Z{{\mathds Z}}
\def\2Z{{\fract{\Z}{2\Z}}}
\def\e{\varepsilon}
\def\p{\partial}
\newcommand{\fract}[2]{\hbox{\leavevmode
  \kern.1em \raise .25ex \hbox{\the\scriptfont0 $#1$}\kern-.1em }\big/
  {\hbox{\kern-.15em \lower .5ex \hbox{\the\scriptfont0 $#2$}} }}
\newcommand{\saut}{\vspace{\baselineskip}}
\newcommand{\dessin}[2]{\vcenter{\hbox{\includegraphics[height=#1]{#2.pdf}}}}
\def\wGraph{w\textnormal{Graph}}
\DeclareMathOperator{\Tube}{Tube}
\begin{document} 

\title{Welded graphs, Wirtinger groups and knotted punctured spheres} 
\author[B. Audoux]{Benjamin Audoux}
\address{Aix Marseille Univ, CNRS, Centrale Marseille, I2M, Marseille, France}
\email{benjamin.audoux@univ-amu.fr}
\author[J.B. Meilhan]{Jean-Baptiste Meilhan} 
\address{Univ. Grenoble Alpes, CNRS, Institut Fourier, F-38000 Grenoble, France}
\email{jean-baptiste.meilhan@univ-grenoble-alpes.fr}
\author[A. Yasuhara]{Akira Yasuhara} 
\address{Faculty of Commerce, Waseda University, 1-6-1 Nishi-Waseda,
  Shinjuku-ku, Tokyo 169-8050, Japan}
	 \email{yasuhara@waseda.jp}

\begin{abstract} 
  We develop a general diagrammatic theory of welded graphs, and
 provide an extension of Satoh's Tube map from welded graphs to ribbon surface-links.
As a topological application, we obtain a complete link-homotopy
classification of so-called \emph{knotted punctured spheres} in
$4$--space, by means of the $4$--dimensional Milnor invariants
introduced previously by the authors.
On the algebraic side, we show that the theory of welded graphs can be reinterpreted as a theory of Wirtinger group presentations, up
to a natural set of transformations; a group admitting such a
  presentation arises as the
fundamental group of the exterior of the surface-link obtained from the
given welded graph by the extended Tube map.
Finally, we address the injectivity question for the Tube map,
  identifying a new family of local moves on welded links, called
  $\Upsilon$ moves, under which the (non extended) Tube map is invariant. 
\end{abstract} 

\subjclass[2020]{57K45; 57K12; 20F05; 20F34; 57M15}

\keywords{surface-links in $4$--space, welded knotted objects, Gauss diagrams, Wirtinger group, Tube map.}

\maketitle

\section*{Introduction}

A \emph{surface-link}\index{surface-link} in $4$--space is the image of an embedding of a compact oriented surface in $\R^4$ up to ambient isotopy. 
The first examples of non trivial surface-links were spun links, described in the mid-twenties by Artin \cite{cEmilletueur};  
these are obtained as the trace of the $2\pi$--rotation of a tangle in a half $3$--dimensional (3D) subspace of $\R^4$ around the
boundary of this subspace. 
Such links happen to be \emph{ribbon} surface-links, meaning that they bound immersed 3D handlebodies in $\R^4$ with only
\emph{ribbon singularities}, locally shaped as the 3D thickening of an arc passing transversally through a $3$--ball. 
Not all surface-links are ribbon, but they form an important subclass which has been
extensively studied in the late sixties \cite{Yaji,yanagawa1,yanagawa2,yanagawa3}, when the systematic study of surface-links began. 
In several ways, ribbon surface-links are easier to handle than general surface-links.  
Indeed, they admit \emph{simple projections}, which are projections in
$\R^3$ with only double points --- while a generic projection of a
surface-link contains in general triple and branch points; for embedded spheres, being ribbon is actually equivalent to having a simple projection \cite{Yaji}. 
Ribbon surface-links also admit 3D \emph{ribbon presentations}, see e.g. \cite{Maru}, which are certain ribbon-immersed disks in $3$--space. 
But despite their simplicity, ribbon surface-links can have a central role in the study of surface-links, such as in \cite{AMW} where $2$--dimensional string links  are classified up to link-homotopy using the fact that any such surface-link is link-homotopic to a ribbon one (a result which is generalized by Proposition \ref{prop:Ribbon}, see below). 

In 2000, ribbon surface-links received renewed interest when Satoh
extended Yajima's Tube map \cite{yajima} ---  
a construction which ``inflates''  classical knot diagrams into
ribbon surface-link diagrams --- to \emph{welded knotted objects}, and showed that this yields a surjective map from welded knots and knotoids to ribbon tori and spheres \cite{Satoh}; he also showed that the Tube map coincides with Artin's spun map on classical knots.  
Welded knotted objects were first introduced by Fenn--Rimanyi--Rourke \cite{FRR} for the $4$--dimensional study of
motions of unknotted circles in $\R^3$, and can be seen as a quotient of the virtual (or, equivalently, Gauss) diagram theory. 
Via Satoh's Tube map, welded knotted objects provide a $2$--dimensional description of ribbon surface-links, 
which has been used effectively in several recents works \cite{ABMW,AMW,arrow,MY7}. 

However, welded links and string links cannot describe all type of ribbon
surface-links, as it restricts to surfaces with connected components of genus 0 or 1, with
0 or 2 boundary components. To overcome this issue, the present chapter develops
a theory of \emph{welded graphs}  which generalize  welded knots, links and string links. We observe that the Tube map extends naturally to welded graphs (Proposition \ref{prop:Tube}), thus allowing for the study of the whole class of ribbon surface-links, of any topological type. 
As a concrete topological application, we obtain the following
classification result. 
\begin{theo*}[Theorem~\ref{thm:classif}] 
  Knotted punctured spheres are classified, up to link-homotopy, by non repeated  $4$--dimensional Milnor invariants.
\end{theo*}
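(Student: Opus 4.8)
The plan is to run an argument in the spirit of Habegger--Lin's classification of string links, transported to the world of welded graphs through the extended Tube map. First I would apply Proposition~\ref{prop:Ribbon} to replace a given knotted punctured sphere by a link-homotopic \emph{ribbon} one, and then Proposition~\ref{prop:Tube}: a ribbon knotted punctured sphere is exactly the image under the extended Tube map of a welded graph whose underlying graph is a forest (genus~$0$, the boundary circles of the surface corresponding to the free ends of the trees). Consequently, knotted punctured spheres of a fixed topological type, taken up to link-homotopy, form a quotient of the set of such \emph{welded forests} by the welded-graph moves together with the self-virtualization moves implementing link-homotopy at the diagrammatic level; when the surfaces have boundary, stacking along the boundary $3$--sphere moreover makes these classes into a group. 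The problem thus becomes combinatorial: classify welded forests up to welded-graph moves and self-virtualization.

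To a welded forest I would attach the fundamental group of the exterior of its Tube image, equipped with its meridian--longitude peripheral system; by the Wirtinger-group machinery of the chapter this group carries an explicit Wirtinger presentation read off the graph. Link-homotopy amounts to imposing the relations $[m,m^{g}]=1$ for every meridian $m$, that is, to passing to the \emph{reduced} Wirtinger group, and one checks that link-homotopic welded forests have isomorphic reduced peripheral systems. The $4$--dimensional Milnor invariants are then recovered as the Magnus coefficients of the reduced longitudes, and reducedness of the group is exactly what makes every coefficient indexed by a repeated sequence redundant --- which is why only the \emph{non repeated} invariants occur.

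The core of the proof is the converse: two welded forests with equal non repeated Milnor invariants must be related by welded-graph moves and self-virtualizations. I would establish this by cutting each tree open along its edges into a welded string link, invoking the known link-homotopy classification of welded string links by Milnor invariants \cite{ABMW}, and then controlling the re-gluing at the trivalent vertices and the capping at the univalent ones, so as to reach a normal form in which each edge is decorated by a standard product of iterated commutators with the Milnor invariants as exponents; from such a form the reduced peripheral system recovers the forest. The delicate point, exactly as classically, is the faithfulness of the resulting reduced Artin-type action on the reduced free group; the structural results on Wirtinger presentations proved earlier are what make this accessible, and this is where I expect the genuine work to lie.

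Finally I would check realizability: every system of non repeated Milnor invariants subject to the appropriate symmetry and Jacobi-type relations is attained by the welded forest whose edges carry the prescribed iterated commutators --- a welded analogue of iterated Bing doubling --- pushed through the extended Tube map. Combined with the previous steps, and generalizing the known $2$--dimensional string link case \cite{AMW}, this yields the desired bijection between link-homotopy classes of knotted punctured spheres and tuples of non repeated Milnor invariants, establishing the classification. I expect the two topological reductions, the construction of the invariant, and the realization step to be essentially formal once the welded-graph framework is in place; the completeness step of the third paragraph is the main obstacle.
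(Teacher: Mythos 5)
Your topological reductions match the paper exactly: Proposition~\ref{prop:Ribbon} to pass to a ribbon representative, Proposition~\ref{prop:Tube} to realize it as $\Tube(\Gamma)$ for a welded forest $\Gamma$, the observation that an SV move is realized by a link-homotopy under Tube, and the identification of the topological Milnor invariants with the combinatorial ones (Remark~\ref{rk:TopologicalCase}). Where you diverge is at the combinatorial heart, namely the statement that two welded forests with equal non repeated Milnor invariants are sv--equivalent (Proposition~\ref{prop:wTreeClassification}). The paper gets this from the injectivity of the reduced Magnus expansion together with Theorem~\ref{prop:RedClassification}, which classifies \emph{all} welded graphs up to sv--equivalence by their reduced peripheral systems; that theorem is proved by putting each graph in a Chen--Milnor normal form (one unmarked vertex per component, Proposition~\ref{prop:ChenMilnor}) and then transposing the local commutator-manipulation argument of \cite{AM}. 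You instead propose to cut each tree into a welded string link, invoke the string link classification of \cite{ABMW}, and control the re-gluing, with the completeness resting on faithfulness of a reduced Artin-type action in the style of Habegger--Lin.

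This alternative route has genuine gaps as stated. First, a tree component with $m_i>2$ marked vertices is not a string link strand, and "cutting it open" is not canonical: different cuttings of sv--equivalent forests need not produce link-homotopic string links, and conversely the re-gluing at trivalent vertices is exactly where the $\Upsilon$-type subtleties of Theorem~\ref{th:Embed} live; "controlling the re-gluing" is not a formality but is essentially equivalent to reproving the normal-form argument of Theorem~\ref{prop:RedClassification}. Second, your claim that stacking along the boundary makes these classes into a group fails for general punctured spheres: gluing two $m_i$-punctured spheres along the unlink changes the topological type (it does not return an $m_i$-punctured sphere unless $m_i=2$), so the Habegger--Lin group-action framework is not available here, and indeed the paper avoids it entirely. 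Finally, the realizability step you add is not needed for the classification as stated (the theorem is an "iff" for pairs of surfaces, which the paper's proof establishes without a surjectivity claim). If you replace your third paragraph by the normal form plus reduced-peripheral-system argument --- or prove your re-gluing control in detail --- the proof closes; as written, the completeness step is the missing piece, exactly where you predicted the work would lie.
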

\noindent 
In this statement,  knotted punctured spheres are properly and smoothly embedded unions of punctured spheres whose boundary
is a fixed unlink (or slice link, see Remark \ref{rem:ark}), forming a vast class of surface-links which contains Le Dimet's linked disks \cite{LeDimet} and $2$--string links studied in \cite{ABMW,AMW}. 
Link-homotopy is Milnor's classical notion of continuous deformation leaving distinct components disjoint at all
time \cite{Milnor},  and  the classification is given using the $4$--dimensional Milnor invariants of surface-links developed in \cite{cutAMY}, where this result was first announced; see Section \ref{sec:Topology} for details. 
\saut

In a different direction, we use welded graphs to address the
injectivity problem for the Tube map on welded knotted objects. 
An important open problem is indeed to determine the kernel of this map. 
The Tube map is known to be injective for welded braids \cite{BH} and
to have a non-trivial kernel in the link case \cite{Blake,IK},
but the general case remains widely open. 
As developed in this chapter, one advantage of the notion of welded graphs is that 
it naturally encompasses all the known moves on welded objects for which the Tube map is invariant. 
Actually, using this setting, we identify a large family of local
moves on welded (string) links, the \emph{$\Upsilon$ moves}  
(Definition \ref{def:HulaHoop}), under which the Tube map is invariant: 
\begin{prop*}[Proposition \ref{prop:hh}]
  The Tube map is invariant under $\Upsilon$ moves. 
\end{prop*}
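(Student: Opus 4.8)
The plan is to reduce the statement to a purely local, four-dimensional isotopy check. Since the Tube map of Proposition~\ref{prop:Tube} is defined diagram-locally --- arcs are inflated to tubes, a classical crossing contributes the standard piece in which the under-tube is pushed into the fourth coordinate and re-glued via a pair of ribbon singularities, and a welded crossing contributes two tubes passing disjointly using the extra dimension --- it suffices to prove the following: if $D$ and $D'$ are two welded (string-link) diagrams which agree outside a disk and differ inside it by a single $\Upsilon$ move, then $\Tube(D)$ and $\Tube(D')$ coincide, as ribbon surfaces-with-boundary in a $4$--ball, rel their common boundary. Equivalently, I want an ambient isotopy of $B^4$, supported near the relevant region, carrying one onto the other; such an isotopy then extends to the whole ambient surface for free, because it fixes a neighbourhood of the boundary sphere along which the local models are glued back in.

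\textbf{Key steps.} Next I would write out the two local models explicitly and compare their broken surface diagrams. On one side of an $\Upsilon$ move there is a small ``hoop'' --- the tube (resp.\ torus) produced by $\Tube$ from the distinguished loop of the move --- threaded through the tubes coming from the neighbouring strands; on the other side this hoop has been slid off. The heart of the argument is to produce the isotopy realising this sliding. I would do this by a short, explicit sequence of Roseman moves on the broken surface diagrams: because we are dealing with tubes, the hoop-tube can be ``finger-pushed'' along the strands through which it is threaded, the successive ribbon self-intersections created and cancelled in the process being precisely the $2$--dimensional shadow of the defining relations of the $\Upsilon$ move. Alternatively --- and this is likely the cleanest write-up --- one can argue at the level of $3$--dimensional ribbon presentations, available here by the ribbon nature of the objects (see Proposition~\ref{prop:Ribbon}): there the move becomes a band slide across a ribbon disk, a manipulation already known to preserve the associated ribbon surface-link. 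One then observes that the various instances of the move (the choices of over/under data and of the orientations of the strands involved) all follow from a single one, since the Tube construction treats a strand and its reverse symmetrically and is equivariant under the relevant local symmetries.

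\textbf{Main obstacle.} The delicate point is the bookkeeping in this central isotopy: one must check that the finger moves can be carried out without producing branch points or triple points that cannot subsequently be cancelled --- that is, that the intermediate surfaces can be kept ribbon, or at least that the net effect is genuinely realised by an ambient isotopy --- and that its support truly remains inside the prescribed disk, so that nothing has to be adjusted elsewhere along the diagram. A secondary subtlety is that the statement concerns the \emph{non-extended} Tube map, so one should make sure that the loop introduced by an $\Upsilon$ move is of the topological type (genus $0$ or $1$, with $0$ or $2$ boundary components) for which the non-extended Tube map is defined, so that the local models above make literal sense; this is built into Definition~\ref{def:HulaHoop}. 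Finally, as a consistency check one can compute the Wirtinger presentation of $\pi_1$ of the exterior of $\Tube(D)$ directly from $D$ and verify, via the dictionary between welded graphs and Wirtinger presentations, that it is unchanged by an $\Upsilon$ move; this is a necessary condition which, although not sufficient by itself (ribbon surface-links are not determined by their groups), confirms that the geometric isotopy above is the correct target.
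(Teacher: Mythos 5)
The reduction to a purely local statement is fine, but the heart of your argument --- the ambient isotopy realizing the move at the tube level --- is never actually produced: you defer it to ``a short, explicit sequence of Roseman moves'' or to the claim that, on $3$--dimensional ribbon presentations, the move ``becomes a band slide \dots already known to preserve the associated ribbon surface-link.'' That claim is unsupported, and there is good reason to doubt that any off-the-shelf manipulation does the job: the paper conjectures that $\Upsilon$ moves are \emph{not} consequences of the usual welded moves, so the isotopy you need is not a diagrammatic triviality, and constructing it is precisely the content of the result. In the paper, this content is carried by Theorem \ref{th:Embed} together with Proposition \ref{prop:Tube}: the classical Tube map factors as $\Tube\circ\psi_{SL}$ (resp. $\Tube\circ\psi_L$, see Remark \ref{rem:genius}); the two sides of an $\Upsilon$ move are shown, by an explicit sequence of expansion, push and contraction moves on $w$--graphs, to have the same image under $\psi_{SL}$; and the extended Tube map is shown to be invariant under all welded graph moves, the genuinely $4$--dimensional step being the stabilization move, handled by creating a cancelling pair of ribbon disks and pushing one through a chamber. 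Your sketch contains no substitute for these steps, so the central gap is exactly the statement to be proved.

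Two further inaccuracies. First, the $\Upsilon$ move (Definition \ref{def:HulaHoop}) is a rearrangement of arrows on the existing strands and does not introduce a separate circle component; the picture of a ``hoop-tube threaded through the neighbouring tubes being slid off'' is therefore not a faithful description of the two local models, and the worry about the topological type of ``the loop introduced by the move'' is moot --- the non-extended Tube map applies verbatim to both sides. Second, invoking Proposition \ref{prop:Ribbon} to obtain ribbon presentations is out of place: that proposition is a statement up to link-homotopy, whereas the invariance of the Tube map is an isotopy statement, and the fact that the relevant surfaces are ribbon with explicit fillings is already built into the Tube construction itself. The Wirtinger-group consistency check at the end is correct but, as you note, far from sufficient, since ribbon surface-links are not determined by their groups.
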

\noindent These $\Upsilon$ moves do not seem to be induced by usual
welded moves, which leads us to conjecture that they provide 
general obstructions for the injectivity of the Tube map on welded (string) links. 
\saut 

Welded graphs already appeared in the literature \cite{KM,BND}, in closely related forms. 
Our approach is in comparison deliberately more combinatorial, 
given in terms of oriented graphs with edges labeled
by elements in the free group generated by the vertices. 
This highlights a striking relationship between welded objects and \emph{Wirtinger presentations}, which are finite presentations where each relation identifies a generator with a conjugate of another one. 
A celebrated result of Wirtinger provides such a presentation for the
fundamental group of the complement of a (string)link described by a diagram, 
and it is well-known that this procedure extends to welded knots and (string) links.
One of the main claims of the present chapter is that the theory of welded
graphs can actually be recasted as a theory of Wirtinger presentations
up to some natural transformations which preserve the associated
group; see Theorem \ref{prop:SameSame} for a precise statement. 
Moreover, this correspondence is naturally compatible with the extended Tube
map, providing a nice connection between combinatorial,
algebraic and topological objects, illustrated by the following result:
\begin{prop*}[Proposition \ref{prop:TubePi1}]
   For any welded graph $\Gamma$, the associated Wirtinger group $G(\Gamma)$ is isomorphic to the fundamental group $\pi_1\big(X^4\setminus \Tube(\Gamma)\big)$ of the exterior of the surface-link $\Tube(\Gamma)$.
 \end{prop*}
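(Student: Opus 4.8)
The plan is to extend to welded graphs Satoh's computation of the fundamental group of the exterior of a tubed welded link \cite{Satoh}. One may assume the ambient space is $\R^4=\R^3\times\R$, with the welded graph diagram $D$ realizing $\Gamma$ lying in $\R^2\subset\R^3$; the general statement then follows since $\Tube(\Gamma)$ sits in a $4$--ball, whose inclusion into $X^4$ induces an isomorphism on the fundamental groups of the exteriors. Recall from Proposition~\ref{prop:Tube} that $\Tube(\Gamma)$ is assembled from local pieces governed by $D$: over each edge of $\Gamma$ --- each arc of the broken diagram --- a tube is inflated in $\R^3$; at each classical crossing the under--tube is pushed into the fourth coordinate so as to run disjointly past the over--tube; at each virtual crossing the two tubes are similarly separated in the fourth coordinate, forming an unlinked pair; and each vertex is inflated to a punctured $2$--sphere, embedded via a small fourth--coordinate perturbation, whose boundary circles are capped by the tubes of the incident edges.

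The heart of the argument is to feed this local picture into van Kampen's theorem. I would cover $\R^4\setminus\Tube(\Gamma)$ by the exteriors of regular neighborhoods of: (i) each edge--tube; (ii) a small $4$--ball around each classical crossing; (iii) a small $4$--ball around each virtual crossing; (iv) a small $4$--ball around each vertex; arranged so that every piece and every pairwise intersection is path--connected. The exterior of an edge--tube retracts onto a circle, contributing one generator, the meridian $m_e$ of that edge. The exterior in a classical--crossing ball is Satoh's local model of one tube diving past another, for which van Kampen produces exactly the Wirtinger relation $m_{e'}=m_b^{\,\e}\,m_e\,m_b^{\,-\e}$ among the meridians of the two under--arcs $e,e'$ and the over--arc $b$ ($\e=\pm1$ the crossing sign). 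The exterior in a virtual--crossing ball is that of two disjoint unlinked tubes and imposes no relation, so meridians pass unchanged through virtual crossings. The exterior in a vertex ball is that of a punctured $2$--sphere with the incident tubes attached, and its fundamental group --- the genuinely new local computation --- contributes, among the meridians of the incident edges, precisely the relation assigned to that vertex in the definition of $G(\Gamma)$. Gluing everything, van Kampen yields a presentation of $\pi_1(\R^4\setminus\Tube(\Gamma))$ with one generator per edge, a Wirtinger relation per classical crossing, the prescribed relation per vertex, and nothing per virtual crossing; up to the evident Tietze reorganization --- identifying each generator of $G(\Gamma)$ with the appropriate meridian and each edge label in $F(V(\Gamma))$ with the word in meridians read off along the corresponding tube --- this is exactly the defining presentation of $G(\Gamma)$, giving the isomorphism $G(\Gamma)\cong\pi_1\big(X^4\setminus\Tube(\Gamma)\big)$, which one checks to be natural in $\Gamma$.

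The main obstacle is step (iv): fixing the $4$--dimensional model of an inflated vertex and checking that attaching the incident tubes to the punctured sphere creates no relation among the meridians beyond the one coming from $G(\Gamma)$ --- in particular, that the fourth--coordinate perturbation used to embed the vertex surface and route its tubes introduces no hidden linking. I would argue that, up to ambient isotopy inside the vertex ball, the inflated vertex together with truncated collars of its tubes is an unknotted, unlinked planar surface, so that its exterior deformation retracts onto a wedge of circles with one $2$--cell attached, realizing exactly the desired relation; vertices carrying a higher genus, or edges that are loops at a vertex, are treated the same way and yield commutation relations. The remaining points --- connectedness of the van Kampen pieces and their intersections, sign and orientation bookkeeping in the Wirtinger relations, and the Tietze reduction matching the two presentations --- are routine, and I would merely indicate them.
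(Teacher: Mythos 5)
Your argument is correct in outline, but it takes a genuinely different route from the paper's. You compute $\pi_1\big(X^4\setminus\Tube(\Gamma)\big)$ by choosing a broken surface diagram for $\Tube(\Gamma)$ assembled from a planar (virtual) diagram of $\Gamma$, and running van Kampen over local pieces: arc-tubes give meridian generators, classical crossings give Wirtinger conjugation relations, virtual crossings give nothing, and the inflated vertices give identifications among the incident tube meridians, after which a Tietze reduction recovers the presentation of $G(\Gamma)$. The paper instead works directly with the ribbon filling $H$ associated with a reduced representative of $\Gamma$: since $H$ retracts onto a $1$--complex, $X^4\setminus H$ is simply connected, so generators of $\pi_1\big(X^4\setminus\partial H\big)$ are dual to the chambers of $H$ (one loop per chamber, puncturing it once) and relations are dual to the ribbon disks (crossed transversally by generic homotopies of loops); as chambers and ribbon disks are literally the vertices and edges of the reduced $w$--graph, the Wirtinger presentation of $G(\Gamma)$ appears on the nose, with no choice of diagram and no Tietze step. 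Your approach buys direct compatibility with Satoh's original computation and makes all local models explicit, but it carries bookkeeping you should tighten: in the definition of $G(\Gamma)$ a vertex is a \emph{generator}, not a relation, so what the vertex ball must produce is the $k-1$ identifications of the $k$ incident tube-end meridians (your ``wedge of circles with one $2$--cell attached'' is the case $k=2$), which is exactly what licenses eliminating all arc-meridians in favour of one generator per vertex; an edge labelled by a length-$n$ word contributes $n+1$ arc-meridians, all absorbed in that elimination, leaving one relation $\overline{x_b}\,x_a^w$ per edge; and you implicitly use that the broken-surface-diagram description of the generalized Tube map agrees with its definition via the ribbon filling (Remark \ref{rem:genius}). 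None of these is a fatal gap, but each needs to be stated for the argument to close.
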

 
More generally, these Wirtinger presentations associated with welded graphs, yield  a whole \emph{peripheral system}, which also corresponds to a topological peripheral system for the associated ribbon surface-links. 
As a matter of fact, the above-mentioned $4$--dimensional Milnor
invariants are extracted from this peripheral system, and the above-stated
link-homotopy classification of knotted punctured spheres actually
follows from a more general diagrammatic result. 
Specifically, we introduce a notion of \emph{self-virtualization} equivalence (sv) for welded graphs, which is the welded counterpart of link-homotopy for ribbon
surface-links, and we prove the following, where \emph{reduced} refers to a
quotient of the peripheral system (see Section \ref{sec:reduit}):
\begin{theo*}[Theorem \ref{prop:RedClassification}]
  Two welded graphs are sv--equivalent if and only if they have
  equivalent reduced peripheral systems.
\end{theo*}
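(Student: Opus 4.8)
I would prove the two implications separately. The forward implication, that sv--equivalence implies equivalence of the reduced peripheral systems, is an invariance check: it suffices to see that a single self-virtualization leaves the reduced peripheral system unchanged. In terms of the Wirtinger presentation attached to a welded graph (Theorem~\ref{prop:SameSame}), turning a classical self-crossing into a virtual one amounts to deleting the Wirtinger relation $\mu_k=\mu_j\mu_i\mu_j^{-1}$ carried by that crossing and replacing it by $\mu_k=\mu_i$, where $\mu_i,\mu_j,\mu_k$ are all conjugates of one and the same generating meridian $\mu$. In the reduced quotient every conjugate of $\mu$ commutes with $\mu$, hence with every other conjugate of $\mu$, so the two relations become equivalent after reduction; the reduced group is therefore unchanged, and tracking the effect on the distinguished meridian--longitude pairs shows that each longitude is modified only by a product of commutators of the form $[\mu^{w},\mu]$, which die in the reduced group. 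The remaining generators of sv--equivalence involve no classical crossing and preserve even the full (unreduced) peripheral system, so the ``only if'' direction follows.

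For the converse, the plan is to first replace $\Gamma$ by a more rigid model. Choosing a spanning tree of the underlying graph and cutting $\Gamma$ open along it produces a welded string link $T$ whose ``closure'' --- the stacking and plat operations prescribed by the tree --- recovers $\Gamma$, in such a way that the reduced peripheral system of $\Gamma$ is entirely encoded by the reduced longitudes of $T$, up to the simultaneous conjugations and identifications coming from the closure. Two welded graphs with equivalent reduced peripheral systems can then, after fixing compatible spanning trees, be presented by welded string links $T$ and $T'$ whose reduced longitudes agree up to this closure indeterminacy, and an sv--equivalence between $T$ and $T'$ descends to one between $\Gamma$ and $\Gamma'$. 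So it is enough to prove the classification for welded string links, together with a short argument handling the conjugation/closure indeterminacy.

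The heart of the proof is thus the string-link statement, which I would establish in the spirit of Habegger--Lin's classification and of the authors' earlier work on $2$--string links: show that sv--equivalence classes of welded string links form a group under concatenation, that the reduced longitudes define a homomorphism from this group to the relevant product of reduced free groups, and that this homomorphism is injective. Injectivity I would prove by ``combing'' the components one at a time: using the Wirtinger/arrow presentation, whenever the $i$--th reduced longitude is trivial one shows that finitely many self-virtualizations applied to self-crossings of the $i$--th component make that component parallel to a fixed reference strand, the point being that a single self-crossing change alters the $i$--th longitude precisely by a commutator $[\mu_i^{w},\mu_i]$ and that such commutators generate the kernel of the reduction map on the $i$--th longitude. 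Iterating over all components, and using that a trivialized component can be pushed off the others, reduces $T$ to the trivial welded string link.

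The main obstacle will be this last injectivity argument: one must control, inside the reduced free groups, the set of longitude tuples realizable by welded string links and check that the combing procedure terminates without reintroducing nontriviality in already-treated components --- this is exactly where the reduced quotient (each meridian commuting with its conjugates) is used. A secondary, more bookkeeping difficulty is to make the graph-to-string-link reduction genuinely faithful: one has to verify that different choices of spanning tree yield sv--equivalent welded string links, and that the closure indeterminacy on reduced longitudes matches precisely the conjugation ambiguity built into the notion of equivalence of reduced peripheral systems, so that no invariant of the welded graph is lost in passing to $T$.
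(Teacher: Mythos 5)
Your ``only if'' direction is sound and is essentially the paper's Proposition \ref{prop:RedSelf}: an SV move turns a relation $\overline v_2 v_1$ into $\overline v_2 v_1^{v}$ with $v$ a meridian of the same component, which collapses back to $\overline v_2 v_1$ in the reduced quotient, and a longitude running through the modified edge picks up an inserted conjugate of that meridian, hence represents the same coset modulo $N_i$. (Small correction: the effect on a longitude is the insertion of a conjugate of $\mu_i^{\pm1}$, absorbed by the coset modulo the normal closure $N_i$, not literally a commutator $[\mu_i^w,\mu_i]$; this distinction matters when you later try to characterize the kernel in your combing argument.)

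The converse, however, has a genuine gap at the reduction step. A general welded graph has components with several marked vertices and with positive first Betti number; ``cutting open along a spanning tree'' to obtain a welded string link $T$ is not actually defined in your sketch, and the assertion that the reduced peripheral system of $\Gamma$ is encoded by the reduced longitudes of $T$ ``up to closure indeterminacy'', with that indeterminacy disposed of by a short argument, is precisely the hard point rather than bookkeeping. For closed components this is the welded analogue of the classical link-versus-string-link problem, and in the welded setting it is the content of \cite{AM}; moreover nothing in your plan addresses loop-longitudes at all, whose unorderedness, inversion and precomposition ambiguities (Definition \ref{def:PeripheralSystems}) must be realized by explicit moves --- this is where the paper uses orientation reversal, generalized stabilization, and an expansion/push/contraction sequence. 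Note also that distinct welded string links can have the same closure without any a priori controlled relation between them (compare Theorem \ref{th:Embed}), so lifting an equivalence of reduced peripheral systems of the graphs to an agreement of string-link longitudes up to realizable indeterminacy is itself a theorem to prove. The paper avoids all of this by never leaving the graph world: Proposition \ref{prop:ChenMilnor} puts any welded graph, up to sv-moves, into a normal form with one unmarked vertex per component whose edge labels are the longitude words; the three operations generating equivalence of reduced peripheral systems are then realized by graph moves, and the residual word-level ambiguity (insertions of conjugates of $\mu_i$ and of commutators $[\mu_j,w_j]$ and $[\mu_j,\mu_j^w]$) is handled by the local arguments of \cite{AM}. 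Your Habegger--Lin-style combing (essentially \cite{ABMW}) could serve as an ingredient for type $(2,0)$ graphs, but as written the passage from arbitrary welded graphs to welded string links is the missing idea, and without it the argument does not go through.
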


The chapter is organized as follows. 
In Section \ref{sec:WeldedGraphs}, welded graphs are introduced, and the relationship with welded links and string links is clarified;  the connection with ribbon surface-links via an extension of Satoh's Tube map is also presented. 
In Section \ref{sec:Wirtinger}, we introduce Wirtinger presentations and their associated peripheral systems, and the relation to welded graphs is discussed. 
We also introduce the notion of self-virtualization, and prove our classification result for welded graphs up to self-virtualization.  
In Section \ref{sec:Topology} we return to topology;  we prove that every knotted punctured spheres is link-homotopic to a ribbon one, which leads to the classification of these objects up to link-homotopy.
Finally, in Section \ref{sec:delamort}, we give the proof of Theorem \ref{th:Embed} ; further technical results are also shown in Section \ref{sec:ExtraMoves}.

\subsection*{Notation}
For every element $a$ of a group $G$, the inverse of $a$ can be
denoted by $\overline{a}$ or $a^{-1}$; and for every $a,b\in G$, we set $a^b=\overline b a
b$ and $[a,b]=\overline bab\overline a$. We denote by $F( a_1,\ldots,a_r)$ the free group on
the generators $a_1,\ldots, a_r$.

\begin{acknowledgments}
The first, resp. second, author is partially supported by the project SyTriQ (ANR-20-CE40-0004), resp. the project AlMaRe (ANR-19-CE40-0001-01), of the ANR.
The third author is supported by the JSPS KAKENHI grant 21K03237.
\end{acknowledgments}

\section{Welded graphs}\label{sec:WeldedGraphs}

\subsection{Welded knotted objects}\label{sec:on}
Let us start with a quick review of the ``usual'' welded theory. This theory was initially given in
terms of virtual diagrams \cite{Kauffman}, but we will consider here the Gauss diagram
point of view; see for instance \cite{ABMW} for a correspondance
between the two approaches.

\emph{Gauss diagrams} were first introduced as a pictural form for the Gauss code representation of
knots. They  are abstract and signed arrows whose endpoints
are embedded in a compact oriented $1$--dimensional manifold  
that shall be denoted by $M$ hereinafter; see Figure
\ref{fig:WeldedLinks} for examples.
\begin{figure}
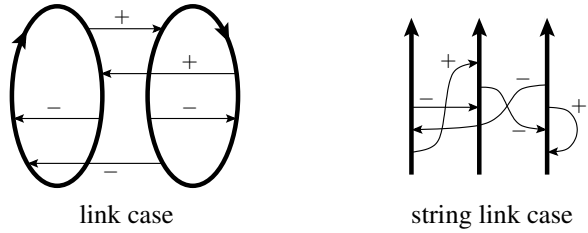

  \[
    \begin{array}{ccc}
      \dessin{2.81cm}{GaussLink}&\hspace{1cm}&\dessin{2.81cm}{GaussStringLink2}\\
      \textrm{link case}&&\textrm{string link case}
    \end{array}
    \]
  \caption{Some Gauss diagrams}
  \label{fig:WeldedLinks}
\end{figure}

\emph{Welded knotted objects}\index{welded knotted objects} are defined as equivalence classes of Gauss
diagrams up to the following \emph{welded moves} (here, and in subsequent figures, $\varepsilon$ and $\eta$ are two arbitrary signs):
\[
    \begin{array}{ccc}
      \dessin{1.875cm}{GR11} \stackrel{\textrm{Reid1}}{\longleftrightarrow}  \dessin{1.875cm}{GR12} 
&\hspace{1cm}&
\dessin{1.875cm}{GR31} \stackrel{\textrm{Reid3}}{\longleftrightarrow}  \dessin{1.875cm}{GR32}\\
\dessin{1.875cm}{GR21} \stackrel{\textrm{Reid2}}{\longleftrightarrow}  \dessin{1.875cm}{GR22} 
      & \hspace{1cm}&
\dessin{1.875cm}{GOC1} \stackrel{\textrm{TC}}{\longleftrightarrow}  \dessin{1.875cm}{GOC2}
    \end{array}.
  \]
  We stress that a TC move allows one to freely
  exchange the relative position of two adjacent arrow
  tails on $M$; TC actually stands for \emph{Tails Commute}.
  Figure \ref{fig:exchange} represents the \emph{exchange moves}, which are consequences of Reid2 and Reid3 moves (see \cite[Fig.~4.1]{ABMW}), 
  and which similarly  allow to exchange the relative positions of two
  adjacent arrow endpoints, up to the addition of a pair of
  oppositely-signed arrows.

    \begin{figure}[b]
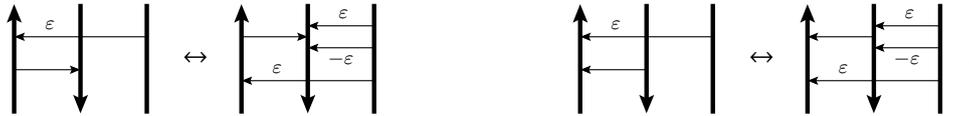

\[
  \dessin{1.875cm}{GPush_2} \leftrightarrow \dessin{1.875cm}{GPush_1}
  \hspace{2cm}
    \dessin{1.875cm}{GPush_3} \leftrightarrow \dessin{1.875cm}{GPush_4}.
\]
  \caption{Exchanging arrow endpoints}
  \label{fig:exchange}
\end{figure}
  
Depending on the topological nature of the $1$--dimensional manifold $M$, one can define various types of welded knotted objects: 
\begin{itemize}
\item \emph{welded knots} correspond to the case where $M$ is a single
  copy of $S^1$, and \emph{welded links} to the case where $M$ is a
  finite union of (ordered) disjoint copies of $S^1$;
\item \emph{welded string links} correspond to the case where $M$ is a finite union of (ordered) disjoint intervals;
\item \emph{welded knotoids} are  the quotient of $1$--component welded string links, by the move which deletes or creates an arrow whose tail is adjacent to an endpoint of the interval. 
\end{itemize}

We next give some notation and observations that will be useful in
Section \ref{sec:WeldedLinks}, and that shall also smoothly drag
the above definitions towards the notion of welded graphs.

Observe that, up to ambient isotopy and TC moves, the relevant information for an arrow tail is not its precise position, but rather 
the sub-arc of $M$, bounded by two arrow heads, where this tail is located. 
This means that the information carried by an arrow can be summarized by the position of its head, together with a label indicating the sub-arc of $M$ where the tail is located; the arrow sign can in turn be given by an overlining, or a ``$-1$" exponent, on this label if the  sign is negative. 
More generally, a bunch of arrows with adjacent heads can be described
by a word obtained by concatenating the corresponding labels. 
Moreover, observe that, thanks to the TC move, bunches of adjacent arrow tails can be thought of as gathered in a single point. Such a point corresponds to  the sub-arc of $M$ containing only these tails and comprised between two arrow heads.  This allows us to think of
Gauss diagrams as a collection of such ``tails localization'' points,
connected by oriented arcs decorated with words describing the
sequence of arrow heads met on this portion of the Gauss diagram.\\
These rough observations shall be formalized in the next subsection, through the notion of welded graph.
\saut

 Finally, given an arrow $A$ of a Gauss diagram $G$, and assigning a letter to every sub-arc of $M$ comprised between two heads and containing no head in its interior, we define the \emph{head (resp. tail) conjugation of $A$ by a word $w$}, as the
insertion of the bunch of arrow heads associated with $w$ on one side
of the head (resp. tail) of $A$, and of the bunch of arrow heads
associated with $\overline w$ on the other side.
Notice that inserting such extra arrows splits some of the sub-arcs of $M$, hence changes the alphabet. 
For example, Figure \ref{fig:exchange} can be summarized by saying that an endpoint of some arrow $A_1$ can be
pushed through the adjacent head of an arrow $A_2$, at the cost of
conjugating the other endpoint of $A_1$ by the one-letter word corresponding to $A_2$.

\subsection{Definition of welded graphs}

\begin{defi}
A \emph{$w$--graph} is an oriented graph $\Gamma$ with edges decorated
by elements of the free group generated by the vertices of $\Gamma$,
and with a subset of marked vertices.\footnote{These marked vertices
    should be thought of as ``fixed'' endpoints as in string links,
  and unmarked univalent vertices as ``free'' endpoints as in knotoids.} In other words, a $w$--graph is the data of
\begin{itemize}
 \item a set $V_\Gamma$ of \emph{vertices},
 \item a set $E_\Gamma$ of \emph{edges}, made of triplets $(a,b,w)$ where $a,b\in V_\Gamma$, and $w\in
F(V_\Gamma)$ is the edge decoration,
 \item a (possibly empty) subset $V^\circ_\Gamma\subset V_\Gamma$ of \emph{marked vertices}.
\end{itemize}
The connected components of the graph are ordered, and so are 
the marked vertices on each connected component. 
An edge decorated by the unit element $1$ of $F(V_\Gamma)$ shall be called an \emph{empty edge}. 

In figures, we always represent marked vertices by white dots $\circ$, while unmarked vertices are represented by black dots $\bullet$. 

We say that a $w$--graph is of \emph{type}
$\big((m_1,b_1),\ldots,(m_\ell,b_\ell)\big)$ if the underlying graph $\Gamma$ has $\ell$ (ordered) connected components, and the $i$--th
component has exactly $m_i$ marked vertices and first Betti number
$b_i$; we simply say that it has type $(m,b)$ if $m_1=m_2=\cdots=m_\ell=m$ and $b_1=b_2=\cdots=b_\ell=b$. 
\end{defi}

\begin{figure}
  \[
    \begin{array}{ccc}
     \dessin{1.24cm}{CGM41}\xrightarrow{\textrm{C}} \dessin{1.24cm}{CGM42} &\hspace{1cm}& \dessin{1.24cm}{CGM51}\xleftrightarrow{\textrm{OR}} \dessin{1.24cm}{CGM52}\\
      \textrm{contraction}&&\textrm{orientation reversal}
    \end{array}
  \]
  \vspace{.5cm}
  \[
    \begin{array}{c}
      \dessin{2.06cm}{CGM61}\xleftrightarrow{\textrm{S}}
      \dessin{2.06cm}{CGM62}\\
      \textrm{stabilization}
    \end{array}
  \]
    \vspace{.4cm}
  \[
    \begin{array}{ccc}
      \dessin{1.24cm}{CGM11}\xleftrightarrow{\textrm{R1}} \dessin{1.24cm}{CGM12}&\hspace{1cm}&\dessin{2.29cm}{CGM31}\xleftrightarrow{\textrm{R3}} \dessin{2.29cm}{CGM32}\\
      \textrm{Reidemeister 1}&&\textrm{Reidemeister 3}
    \end{array}
    \]
  \caption{Moves on w-graphs
    \\{\footnotesize Here, and in the next figure, all depicted trivalent vertices represent vertices of any valency} }
  \label{fig:WeldedMoves}
\end{figure}

We define the following moves on $w$--graphs, illustrated in Figure
\ref{fig:WeldedMoves}:
\begin{itemize}
\item \emph{contraction} (C), which removes an empty edge $(a,b,1)$ from $E_\Gamma$ and replaces
  $V_\Gamma$ by $\fract{V_\Gamma}{a\sim b}$, where $a$ and $b$ are identified, under the assumption that 
  $b\notin V^\circ_\Gamma$; 
\item \emph{orientation reversal} (OR), which replaces an edge
  $(a,b,w)$ by $(b,a,\overline w)$;
\item \emph{stabilization} (S), which adds a common prefix $a$ to all the edges connected to a given
  vertex $b$, under the assumption that all the edges are oriented
  outwards $b$, that $b\notin V^\circ_\Gamma$ and that all edge
    decorations of $\Gamma$ are in $F\big(V\setminus\{b\}\big)$;
\item \emph{Reidemeister I} (R1), which replaces an edge $(a,b,w)$ by
  $(a,b,a^{\e}w)$ for some $\e\in\{\pm1\}$;
  \item \emph{Reidemeister III} (R3),  which replaces by $wb$ the decoration $aw$ of an edge $e\in
  E_\Gamma$, under the assumption that $(a,b,w)\in
  E_\Gamma\setminus\{e\}$.
\end{itemize}

  \begin{remarque}
    The contraction move induces its inverse move, the
    \emph{expansion} move (E), which duplicates a
    vertex $v$ into $v_1$ and $v_2$, and creates a new
   empty edge between them. All the edges which
   were adjacent to $v$, have to be distributed between $v_1$ and $v_2$,
   and any occurence of $v$ in edge decorations can be freely replaced by $v_1$ or $v_2$.
  \end{remarque}

Note that the number of marked vertices,
as well as the first Betti number, are preserved by all the above
moves.
\begin{defi}
  A \emph{welded graph}\index{welded graph} is an element of the set
  \[
\wGraph:=\fract{\big\{\textrm{$w$--graphs}\big\}}{\textrm{C,OR,S,R1,R3}},
\]
and its \emph{type} is defined as the type of any of its representatives.
\end{defi}

\begin{figure}
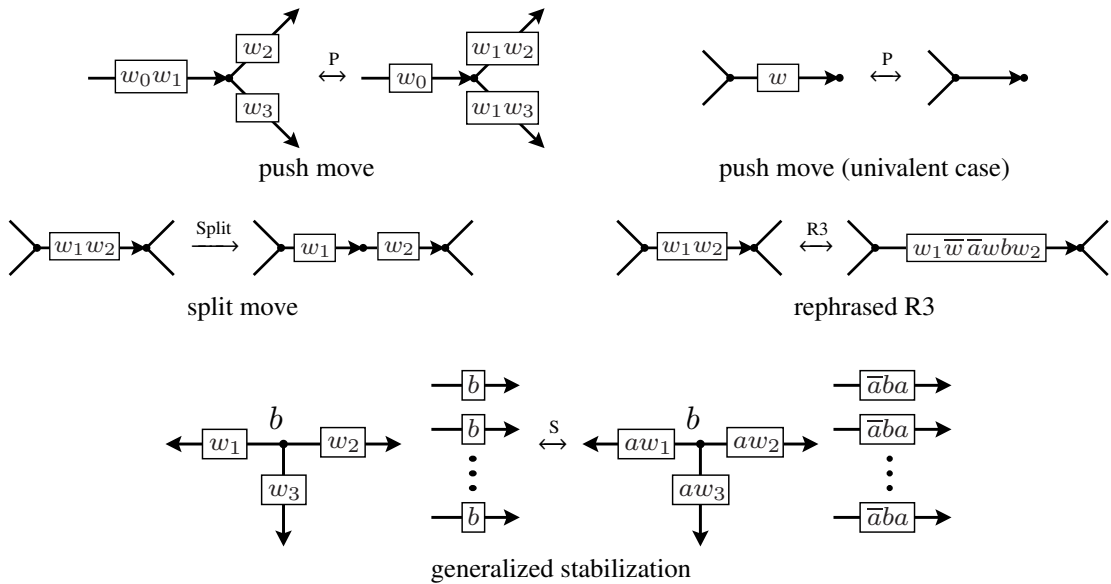

  \[
    \begin{array}{ccc}
     \dessin{2.06cm}{CGM6t1}\xleftrightarrow{\textrm{P}}
    \dessin{2.06cm}{CGM6t2} &\hspace{1cm}&\dessin{1.24cm}{CGM6p1}\xleftrightarrow{\textrm{\ P\ }}
                                           \dessin{1.24cm}{CGM6p2}
                                                                   \\
      \textrm{push move}&&\textrm{push move (univalent case)}
    \end{array}
  \]
     \[
    \begin{array}{ccc}
     \dessin{1.24cm}{Split_1}\xrightarrow{\textrm{Split}}\dessin{1.24cm}{Split_3}&\hspace{1cm}&\dessin{1.24cm}{Insert_1}\xleftrightarrow{\textrm{R3}}\dessin{1.24cm}{Insert_6}\\
                                                                                \textrm{split move}   &&\textrm{rephrased R3}
    \end{array}
  \]
    \[
    \begin{array}{c}
     \dessin{2.94cm}{Stab_1}\xleftrightarrow{\textrm{S}}\dessin{2.94cm}{Stab_6}\\
      \textrm{generalized stabilization}
    \end{array}
  \]
  \caption{Extra moves on w-graphs}
  \label{fig:ExtraWeldedMoves}
\end{figure}

More admissible moves can be obtained by combining moves C, OR, S, R1 and R3.
We gather below some of these extra moves, illustrated in Figure \ref{fig:ExtraWeldedMoves}: 
\begin{itemize}
\item \emph{push move} (P), which pushes, according to the edges orientations, a word through an unmarked
  vertex that does not appear in any edge decoration; note that, in the case where the involved vertex is univalent, 
  the push move simply erases the word on the incident edge;
\item \emph{split move} (Split), which splits an edge in two and
  separates the decoration between them. Note that it induces its
    inverse move, which deletes a bivalent vertex that does not
    appear in  any edge decoration, merges the two adjacent edges and concatenates their edge 
  decorations.
\end{itemize}
Also:
\begin{itemize}
\item the Reidemeister III move can be rephrased as follows: given an edge $(a,b,w)$, 
  insert the word $\overline w\,\overline awb$ (or any cyclic 
  permutation of it, or of its inverse) within the decoration of any other edge;
\item in a stabilization move, the condition that the central vertex $b$ does not
  appear in any edge decoration can also be relaxed, at the cost of
  conjugating by the prefix $a$ every occurence of $b$ in the edge
  decorations.
\end{itemize}
We postpone to Section \ref{sec:ExtraMoves} the proof that each of these extra  moves are indeed consequences of moves C, OR, S and R3.

\subsection{Relation to welded links and string links}\label{sec:WeldedLinks}

As mentioned earlier, welded graphs are closely related to welded
links and string links. This is made explicit in Theorem \ref{th:Embed} below, which is the main result of this subsection.

\subsubsection{Statement}
We define two maps
\begin{gather*}
 \psi_L:\big\{\textrm{welded links}\big\}\to \big\{\textrm{welded
   graphs of type $(0,1)$}\big\}\\
  \psi_{SL}: \big\{\textrm{welded string links}\big\}\to \big\{\textrm{welded graphs of type $(2,0)$}\big\}
\end{gather*}
as follows. 
With a Gauss diagram $D$ for a welded link, we associate a $w$--graph $\Gamma_D$. 
The vertices of $\Gamma_D$ are all unmarked,  and are in bijection with the arrow tails of $D$. The
edges correspond to the oriented pieces of circle  bounded by two
``adjacent'' arrow tails, which are such that they contain no arrow tail in their interior. 
When running according to its orientation, each such piece of circle contains an ordered (possibly empty) sequence $h_1\cdots h_k$ of arrow heads; the decoration of the corresponding edge of $\Gamma_D$ is obtained by replacing each $h_i$ by $t_i^{\e_i}$,
where $t_i$ and $\e_i$ are respectively the tail and the sign of the
arrow which contains $h_i$. Finally, in order to reduce the number of
vertices, empty edges can be contracted. 
See the left-hand side of Figure \ref{fig:GaussToWelded} for an example. 

For a Gauss diagram $D$ of welded string link, we define $\Gamma_D$
in a similar way; each component of the $w$--graph $\Gamma_D$ now contains two marked vertices, which are in bijection with the endpoints of the intervals, and the edges correspond to the oriented pieces of interval between every pair of adjacent arrow tails and/or interval endpoints. 
See the right-hand side of Figure \ref{fig:GaussToWelded}.
\begin{figure}
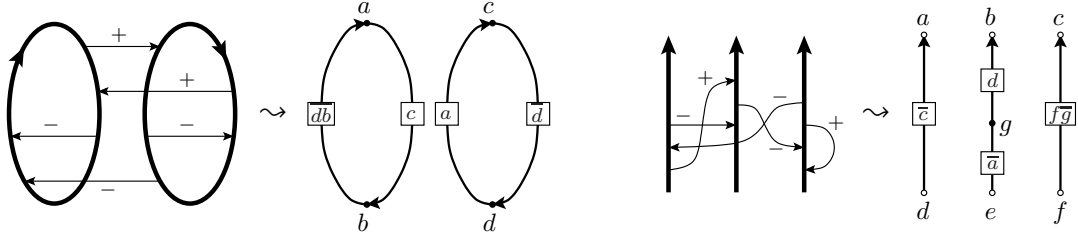

  \[
    \dessin{2.81cm}{GaussLink}\leadsto \dessin{3.44cm}{CutLink}
    \hspace{1cm}
    \dessin{2.81cm}{GaussStringLink2}\leadsto \dessin{3.125cm}{CutStringLink2}
    \]
  \caption{From Gauss diagrams to $w$--graphs}
  \label{fig:GaussToWelded}
\end{figure}

It is a straightforward exercise to check that this assignment $D\mapsto \Gamma_D$ is well defined on welded links and string links, hence 
induces the requested maps $\psi_L$ and $\psi_{SL}$. 

Moreover, as Figure \ref{fig:GaussToWelded} illustrates, the $w$--graphs obtained from the above procedure, have a very specific shape. 
This motivates the following definitions. 
\begin{defi}\label{def:cycliclinear}
A $w$--graph of type $(0,1)$ is called \emph{cyclic} if each component is homeomorphic to a circle, decomposed into bivalent unmarked vertices and oriented edges, such that each vertex has one inward and one outward oriented incident edge; see the left-hand side of Figure \ref{fig:GaussToWelded}.\footnote{ We note that cyclic $w$--graphs are reminiscent of Harary and Kauffman's arc-graphs \cite{HaraKoko}.}

A $w$--graph of type $(2,0)$ is called \emph{linear} if each component
is homeomorphic to an interval, decomposed into two univalent marked
vertices, bivalent unmarked vertices and oriented edges, such that each unmarked vertex has one inward and one outward oriented incident edge; see the right-hand side of Figure \ref{fig:GaussToWelded}. 
\end{defi}

\begin{lemme}\label{lem:PC}
The maps $\psi_L$ and $\psi_{SL}$ are surjective. 
\end{lemme}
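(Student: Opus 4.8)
The plan is to prove surjectivity of $\psi_L$ and $\psi_{SL}$ by showing that every cyclic (resp. linear) $w$--graph lies in the image, and then invoking the fact that, up to welded moves, every welded graph of type $(0,1)$ (resp. $(2,0)$) can be reduced to a cyclic (resp. linear) representative. In fact, the cleanest route is to construct an explicit inverse on the nose: given a cyclic $w$--graph $\Gamma$ of type $(0,1)$, I build a Gauss diagram $D$ for a welded link whose image under $\psi_L$ is $\Gamma$. Since the maps $\psi_L$ and $\psi_{SL}$ land in $\wGraph$ (welded graphs, i.e. classes up to C, OR, S, R1, R3), it suffices to produce, for each welded graph in the target, \emph{some} $w$--graph representative that is manifestly of the form $\Gamma_D$.

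First I would treat the link case. Let $\Gamma$ be a cyclic $w$--graph of type $(0,1)$, with vertices $v_1,\dots,v_n$ arranged along circles and each edge decorated by a word in $F(v_1,\dots,v_n)$. I want to realize each vertex $v_i$ as an arrow tail and each occurrence of a letter $v_i^{\pm 1}$ in an edge decoration as an arrow head attached, with the appropriate sign, to the arrow whose tail is $v_i$. Reading around the circles of $\Gamma$ in the orientation direction, the sequence of vertices and of letters-in-decorations prescribes, on a corresponding union of oriented circles $M$, the cyclic order of all arrow tails and heads; the pairing ``head $h$ carrying letter $v_i^{\e}$ is joined to the tail at $v_i$ with sign $\e$'' then defines a Gauss diagram $D$ on $M$. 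By construction the tails of $D$ sit exactly at the points $v_i$, the arcs between consecutive tails are exactly the edges of $\Gamma$, and the head-sequence read along each such arc recovers the edge decoration after the substitution $h\mapsto t^{\e}$; hence $\Gamma_D=\Gamma$ before any contraction, so $\psi_L(D)=\Gamma$ in $\wGraph$. The subtlety that a decoration may use a letter $v_i$ whose tail lies on a \emph{different} component is harmless: $D$ is still a legitimate (multi-component) Gauss diagram. The string link case is identical, except that $M$ is now a union of oriented intervals, the two univalent marked vertices on each component become the two interval endpoints, and the bivalent unmarked vertices become arrow tails; one checks that $\Gamma_D$ has exactly two marked vertices per component and first Betti number $0$, i.e. is linear, and equals $\Gamma$.

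It remains to explain why every welded graph of type $(0,1)$ admits a cyclic representative (and likewise type $(2,0)$ a linear one), so that the construction above exhausts the target. This is where I expect the main work to be, though it is routine given the move set: starting from an arbitrary $w$--graph representative, one first uses Split to make every vertex have valency $\leq 3$ and then, more importantly, uses expansion (E) together with R3 to remove every vertex that appears inside an edge decoration — R3 in its ``rephrased'' form lets one trade an occurrence of a vertex in a decoration for a detour through the corresponding edge, at the cost of inserting new heads, exactly as in the passage from a Wirtinger-type word to a planar realization. Iterating, one arrives at a representative where no vertex occurs in a decoration coming from ``far away'' and every edge decoration is, up to the substitution, a literal sequence of heads of arrows based at nearby tails; since the type is $(0,1)$ (resp. $(2,0)$), the underlying graph is a disjoint union of circles (resp. intervals), and inserting a bivalent vertex on any edge carrying a nonempty decoration via Split puts the representative in cyclic (resp. linear) form. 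The one genuine obstacle is bookkeeping: checking that this reduction terminates and that the resulting $w$--graph is honestly cyclic/linear — i.e. that each unmarked vertex ends up bivalent with one inward and one outward edge — rather than sliding into an infinite regress of R3 insertions; handling this carefully (e.g. by a well-chosen complexity measure on $(\text{number of vertices in decorations},\text{total decoration length})$) is the crux of the argument, after which surjectivity is immediate.
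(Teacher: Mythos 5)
Your first half --- realizing any cyclic (resp.\ linear) $w$--graph as $\Gamma_D$ for an explicit Gauss diagram $D$, by placing a tail at each vertex and a head for each letter occurrence in a decoration --- is correct and is exactly what the paper does (via Split moves to reduce to one-letter labels, then letter $\mapsto$ arrow). The problem is in your second half, the reduction of an arbitrary type $(0,1)$ (resp.\ $(2,0)$) welded graph to a cyclic (resp.\ linear) representative. You have misidentified what needs to be fixed: being cyclic or linear is a condition on the \emph{underlying graph} only (each component is a coherently oriented circle, resp.\ interval, with bivalent unmarked vertices), and places \emph{no} restriction on which vertices appear in edge decorations. A decoration is allowed to use any vertex of the whole $w$--graph, ``far away'' or not --- that is precisely how an arrow with distant endpoints is encoded. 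So your proposed campaign of using R3 to ``remove every vertex that appears inside an edge decoration'' is aimed at a non-obstruction; worse, it cannot succeed in general (an occurrence of $v$ in a decoration is an arrow with tail at $v$, and R3 only inserts words of the form $\overline w\,\overline a w b$, it does not delete arbitrary letters), which is why you correctly sense a risk of infinite regress. Your complexity-measure idea does not rescue an argument whose goal is unattainable and unnecessary.

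The actual obstruction is purely graph-topological: a component of a type $(2,0)$ $w$--graph is a tree with two marked vertices, and the only thing preventing linearity is the presence of subtrees hanging off the unique shortest path between them (similarly, subtrees hanging off the essential circle in the $(0,1)$ case). The paper removes these one edge at a time: pick an off-path edge $e$ with an endpoint $a$ on the path; if $a$ occurs in no decoration, a push move empties $e$ and a contraction deletes it; if $a$ does occur in decorations, first apply an expansion at $a$ to split off a copy of $a$ carrying those occurrences, then push and contract as before. Termination is immediate since each step strictly decreases the number of off-path edges --- no R3, no Split-to-trivalent preprocessing, and no delicate bookkeeping is needed. A final pass of OR moves orients all edges coherently. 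You should replace your R3-based reduction with this tree-contraction argument; everything else in your proposal then goes through.
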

\begin{proof}
Since the link case is strictly similar, we only give the argument in the string link case. 
We first observe that, up to push and contraction moves, any
$w$--graph of type $(2,0)$ is equivalent to a linear $w$--graph.
Indeed, each component of a $w$--graph of type $(2,0)$ is a tree with two
marked vertices; we consider the shortest path of edges between these
two marked vertices.
Next we pick some edge $e$ which is not in this path, but which
  has an endpoint $a$ on it (and a tree $T$ attached to its other
 endpoint). If $a$ is not used in any edge decoration, we can simply apply a push move 
 to turn $e$ into an empty edge, and then a contraction move to delete
 it; otherwise, we need to first push $a$ inside the
 shortest path using an expansion move, which creates an empty
edge, so that we can remove $e$ as in the former case:\footnote{Another issue would be that the vertex we are willing
  to push through is a marked vertex, but this can be safely avoided
  by pulling first the initial and final vertices away by using expansion moves.} 
\[
  \dessin{2.18cm}{Retract_1}\ \xrightarrow{\ \textrm{E}\ }
  \dessin{2.18cm}{Retract_2}\ \xrightarrow{\ \textrm{P}\ }
  \dessin{2.18cm}{Retract_3}\ \xrightarrow{\ \textrm{C}\ }
  \dessin{2.18cm}{Retract_4}.
  \]
Applying this procedure recursively, we can delete all edges which are
not in the shortest path. Then it only remains to apply OR moves to
orient all edges coherently. 
Now, it remains to note that a linear $w$--graph $\Gamma$ admits a preimage for the map $\psi_{SL}$. 
The process is summarized in the figure below:  
\[ \dessin{.625cm}{Word_1} \xrightarrow{\textrm{Split}} \dessin{.625cm}{Word_1.5}\qquad \leadsto \qquad \dessin{1.41cm}{Word_2}. \]
First, using Split moves, we split each word label on a linear $w$--graph
into letter labels so that each edge is decorated by the, possibly overlined, name of a vertex. 
Next, we replace  each  (non empty) edge decoration by the head of an
arrow, whose tail is attached in a neighborhood of the vertex corresponding to the letter decoration; the sign of this arrow being $-$ if the letter is overlined, and $+$ otherwise.
Note that the relative position of the tails corresponding to a same letter label is irrelevant thanks to the TC move.  
\end{proof}

\begin{remarque}
 As roughly outlined at the end of Section \ref{sec:on}, and
 formalized by the map $\psi_{SL}$, word labelings on the edges of a
 welded graph can locally be thought of as some Gauss diagram arrows. 
 In subsequent figures, we shall  use implicitly this correspondence.
Concretely, this means that we shall sometimes represent bunches of adjacent arrow heads in Gauss diagrams by some
  edge decoration words, whose signed letters correspond to the arrows signs
  and tails.  
A typical example follows in the next definition.
\end{remarque}

We now show that these maps are not injective. 
More precisely, we prove that the injectivity defect is controlled by
the following moves. 

\begin{defi}\label{def:HulaHoop}
  \begin{itemize}
  \item[]
  \item   On welded links and string links, we define the  $\Upsilon$ move as the following local move:
\[
\dessin{3.75cm}{HH_3}
\ \xleftrightarrow{\Upsilon}\ 
\dessin{3.75cm}{HH_4},
\]
 where the three open arrows represent any number of tails whose associated heads are located outside of the picture.\footnote{Since
  the two diagrams are identical outside the represented portion, this
  means that the two bunches are ``identical'' for both diagrams; it
also means that they are not connected to any of the arrows associated with $w$.}
\item On welded links, we define the \emph{global reversal} (GR) move, as
  the move which reverses the orientation of a component, and reverses the
  sign of all arrows whose head is attached to this component.
   \end{itemize}
\end{defi}
\begin{remarque}\label{rem:versions}
We stress that the  $\Upsilon$ move actually is a \emph{family} of moves on welded (string) links. 
Indeed, there are multiple versions of the  $\Upsilon$ move, depending on the word $w$, the signs $\varepsilon$ and $\eta$, and the number of arrow tails (schematically represented by the three open arrows). See for example Section \ref{sec:ExtraMoves2}, where we consider the case  $w=1$, $\eta=-\e$ and involving only the four self-arrows shown in the above figure (and no arrow tail). 
\end{remarque}
The following is the main result of this section. 
As we shall see in Subsection \ref{sec:ribbon}, it is also relevant from the topological point of view.  
\begin{theo}\label{th:Embed}
  \begin{itemize}
  \item[]
  \item The map $\psi_L$ induces a bijection between welded graphs of type $(0,1)$ and welded
    links up to $\Upsilon$ and global reversal moves.
      \item The map $\psi_{SL}$ induces a bijection between welded graphs of type $(2,0)$ and welded
    string links up to $\Upsilon$ moves.
  \end{itemize}
\noindent In particular, the set of welded links and string links, up to $\Upsilon$ and global reversal
  moves, embed into the set of welded graphs.
\end{theo}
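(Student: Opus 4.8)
The plan is to first show that $\psi_L$ and $\psi_{SL}$ descend to the announced quotients, and then to produce an explicit inverse. For the first point I would verify directly on Gauss diagrams that $\Gamma_D$ is unchanged, as a welded graph, when $D$ is modified by a GR move or by an $\Upsilon$ move. For a GR move this is immediate: each edge of $\Gamma_D$ lying on the reversed component has both its orientation and its decoration reversed (the arrow heads it records are exactly the arrows whose sign GR flips, and they are now met in the opposite cyclic order), so $\Gamma_{\mathrm{GR}(D)}$ is obtained from $\Gamma_D$ by OR moves on the edges of that component, hence equals $\Gamma_D$ in $\wGraph$. For an $\Upsilon$ move one computes $\Gamma_D$ on each side and exhibits a short sequence of C, OR, R1, R3 (together with the rephrased-R3 and generalized-stabilization moves of Figure \ref{fig:ExtraWeldedMoves}) relating the two. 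Surjectivity of the induced maps then follows from Lemma \ref{lem:PC}.

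For injectivity I would build a left inverse $\rho$ defined on the whole set of welded graphs of the relevant type. Given a welded graph, choose a representative $w$--graph, reduce it to cyclic (resp.\ linear) form by the retraction procedure in the proof of Lemma \ref{lem:PC} (expansions, push moves, contractions and orientation reversals), and then \emph{thicken} the result to a Gauss diagram as in the last paragraph of that proof: split each edge--decoration into signed letters by Split moves, and replace each signed letter by an arrow head whose tail is placed in the vertex--region named by that letter. Let $\rho(\Gamma)$ be the class of the resulting welded (string) link modulo welded moves, $\Upsilon$ moves and --- in the link case --- GR moves. If $\rho$ is well defined then the induced maps are bijections: on one hand $\Gamma_D$ is already cyclic for any Gauss diagram $D$, and thickening it returns a diagram differing from $D$ only by TC moves and by Reidemeister~II moves coming from cancellations, so $\rho\circ\psi_L=\id$ and $\rho\circ\psi_{SL}=\id$; on the other hand the reduction and thickening use only $w$--graph moves, so applying $\psi_L$ (resp.\ $\psi_{SL}$) to $\rho(\Gamma)$ recovers $\Gamma$. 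The final ``embedding'' assertion is then just the observation that these bijections realize welded links up to $\Upsilon$ and GR (resp.\ welded string links up to $\Upsilon$) as the set of welded graphs of type $(0,1)$ (resp.\ of type $(2,0)$), a subset of all welded graphs.

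So the heart of the matter is the well-definedness of $\rho$, which I would split into two parts. First, independence of the chosen $w$--graph representative: it is enough to treat each generator C, OR, S, R1, R3 and its inverse, showing that performing it and then reducing and thickening changes the output only by welded moves, $\Upsilon$ and GR. Here C and its inverse E can be folded into the reduction itself; R1 and R3 are carried by the reduction to Reidemeister~I and~III moves on the thickened diagram; an OR move only affects the orientation ultimately chosen for a cyclic component, so it changes $\rho$ by a GR move in the link case and by nothing in the linear case (where the two marked vertices are ordered) --- this is the one place GR is genuinely needed; and a stabilization move S is handled by tracking how the inserted prefix becomes, after thickening, a cluster of extra arrow heads near a single vertex, which is then dissolved using TC and the exchange moves. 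Second, independence of the reduction--and--thickening choices themselves: the order in which trees are retracted, the auxiliary expansions used, and --- most delicate of all --- the way in which the word attached to a vertex--region is broken into letters and those tails are pushed past the heads sitting on the incident edges; each of these indeterminacies must be matched with TC moves, Reidemeister moves and $\Upsilon$ moves.

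I expect this last point to be the real obstacle: one must show that the family of $\Upsilon$ moves of Definition \ref{def:HulaHoop} is \emph{exactly} the right one --- that it absorbs every indeterminacy in the thickening of a vertex--region carrying several arrow tails together with its incident edge--decoration, and that no further move is needed. Identifying the precise local configurations that occur, and checking that each one is an instance of some version of the $\Upsilon$ move in the sense of Remark \ref{rem:versions}, is where the work concentrates; the rest is bookkeeping with $w$--graph and Gauss--diagram moves.
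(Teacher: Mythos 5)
Your outline coincides with the paper's own strategy: show that $\psi_{SL}$ (resp. $\psi_L$) is invariant under $\Upsilon$ (resp. $\Upsilon$ and GR) moves, quote Lemma \ref{lem:PC} for surjectivity, and build an inverse by reducing a $w$--graph to linear (resp. cyclic) form and thickening edge decorations into arrows --- this is exactly the paper's map $\overline\xi_{SL}$ (resp. $\overline\xi_L$), and your well-definedness checklist (independence of representative under C, OR, S, R1, R3, and of the reduction/thickening choices) is the same decomposition the paper uses. Your GR computation (edges of the reversed component undergo OR moves) and the claim $\rho\circ\psi=\id$ for cyclic/linear inputs are fine.

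However, there is a genuine gap, and it sits precisely where you say you ``expect the real obstacle'': nothing in the proposal actually proves that contracting an extra edge --- equivalently, that the different ways of retracting a tree attached to the core path, or of reordering edges at a vertex --- changes the thickened Gauss diagram only by welded and $\Upsilon$ moves. This is not deferrable bookkeeping; it is the content of the theorem, since it is the only place the $\Upsilon$ family is shown to suffice. The paper spends most of Section \ref{sec:delamort} on it: it first reduces an arbitrary contraction to contractions of leading and co-leading edges, normalizes with push moves and leading expansions so that the attached tree hangs off a chain $a_1,\dots,a_k$ with each $a_i$ appearing at most once in the adjacent word, and then argues by induction on $k$; the base case rests on two diagram-level lemmas, Lemma \ref{lem:GenHH_1} (pure welded moves, via the cancellation mechanism of Lemma \ref{lem:Star}) and Lemma \ref{lem:GenHH_2} (where the $\Upsilon$ move is used, first in the case $w_2=w_3=1$ and then propagated by conjugation/push arguments). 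Without an argument of this kind, injectivity of the induced map is unproved. A smaller but real omission of the same nature: R1 and R3 invariance of your $\rho$ is not simply ``carried by the reduction'' --- e.g. an R1 move on an extra edge produces one or two arrows in the thickened diagram whose tails must be transported past nontrivial head sequences before a Reidemeister~1 deletion, which again requires Lemma \ref{lem:Star}; and in the link case the interaction of orientation reversal with the linearization step needs the same lemma, beyond the GR move you invoke.
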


For ease of reading, the proof of Theorem \ref{th:Embed} is
  postponed to Section \ref{sec:delamort}.

\subsection{Relation to ribbon surface-links}\label{sec:ribbon}

One important fact about welded links and string links, is that they are closely related to
ribbon surface-links whose components are, respectively, tori and
annuli,  
in the sense that there exists a surjective map from the
former to the latter.
Welded graphs are similarly related to all kind of ribbon
surface-links, 
thus providing a more flexible combinatorial theory for the study of general ribbon surface-links.
In what follows, $X^4$ denotes a  given ambient $4$--manifold in $\big\{S^4,\R^4,B^4\big\}$. 
\begin{defi}
  \begin{itemize}
  \item[] 
  \item A \emph{ribbon handlebody} is an immersion in $X^4$ 
  of a union $H$ of oriented
  solid handlebodies up to ambient isotopy,  such that
  \begin{itemize}
  \item the singular set is a finite union of \emph{ribbon disks}; 
    here a ribbon disk is a disk $D$ of transverse double points with one preimage $D_{\textrm{con}}$,
    called \emph{contractible}, embedded in the interior of $H$, and
    one preimage $D_{\textrm{ess}}$, called \emph{essential},
    properly  embedded in $H$;
  \item  if $X^4=B^4$,
    $H\cap\partial B^4$ is a disjoint union of embedded disks.
  \end{itemize}
  \item A \emph{ribbon surface-link}\index{ribbon surface-link} is an oriented surface-link
    which bounds\footnote{In case $X^4=B^4$, the boundary has to be understood as the
      closure of 
      $\partial H\setminus (H\cap\partial B^4)$.} a ribbon handlebody; the latter is called a \emph{ribbon filling} of the ribbon surface-link.
  \end{itemize}
\end{defi}
We note that a given ribbon surface-link, may admit several different ribbon fillings. 

By definition, the essential preimages of the ribbon disks of a ribbon
handlebody $H$, cut $H$ into 3--dimensional pieces, that we call \emph{chambers}. Each chamber has the
topology of a 3--ball with a finite number of handles attached, 
and a finite number of disks removed from its boundary. These removed 
disks correspond to essential preimages, and each of them is shared by 
two chambers (possibly twice the same). Moreover, each 
contractible preimage of a ribbon disk of $H$ embeds into the interior 
of one of these 
chambers. \\

We say that a $w$--graph is \emph{reduced} if all empty edges are loops, all non empty edges are decorated by
one-letter words, and all marked vertices are univalent. We can define a map
\[
C:\big\{\textrm{ribbon handlebodies}\big\}\to
\fract{\big\{\textrm{reduced $w$--graphs}\big\}}{OR}
\]
as follows. If $H$ is a ribbon handlebody, then $C(H)$ has:
\begin{itemize}
\item one unmarked vertex for each chamber and, if $X^4=B^4$, one marked vertex for
  each connected component of $H\cap \p B^4$;
\item as many empty loops attached to each vertex as the genus
  of the corresponding chamber;
\item one arbitrarily-oriented edge $e_D$ for each ribbon disk $D$: the edge
  connects the two vertices corresponding to the two chambers that are adjacent to $D_{\textrm{ess}}$, 
  and it is decorated by the letter corresponding to the (vertex associated with the) chamber containing $D_{\textrm{con}}$;
  this letter is overlined if and only if, for any given $x_0\in D$, the
  concatenation of (the push-forward of) a positive basis of
  $T_{x_0^{\textrm{con}}}H$, where $x_0^{\textrm{con}}$ is the
  preimage of $x_0$ in $D_{\textrm{con}}$, with (the push-forward of)
  a normal vector for $D_{\textrm{ess}}$ at $x_0^\textrm{ess}$, where $x_0^{\textrm{ess}}$ is the
  preimage of $x_0$ in $D_{\textrm{ess}}$, pointing into the chamber
  corresponding to the target vertex of $e_D$, is a negative basis of $X^4$.
\end{itemize}
\begin{lemme}\label{lem:CBij}
The map $C$ is a bijection. 
\end{lemme}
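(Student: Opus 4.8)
The plan is to construct an explicit inverse to the map $C$, going from reduced $w$--graphs (up to $OR$) back to ribbon handlebodies. First I would observe that a reduced $w$--graph $\Gamma$ carries exactly the combinatorial data needed to reassemble a ribbon handlebody: each unmarked vertex prescribes a chamber, whose genus is read off from the number of empty loops attached to that vertex, and (if $X^4=B^4$) whose boundary gains a removed disk for each marked vertex adjacent to it; each non-empty edge $(a,b,c)$ with one-letter decoration $c$ prescribes a ribbon disk whose essential preimage is the shared boundary disk between the chambers of $a$ and $b$, and whose contractible preimage is pushed into the interior of the chamber of $c$; and the overlining of $c$ together with the $OR$-ambiguity encodes precisely the local sign/orientation datum described in the bulleted definition of $C$. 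The inverse map $R$ then glues solid handlebodies (one per unmarked vertex, with the specified genus and number of boundary holes) along the disks dictated by the edges, immersing each contractible preimage into the designated chamber with the sign-prescribed framing, and the result is a ribbon handlebody $H$ with $C(H)=\Gamma$ by construction.

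The key steps, in order, would be: (1) check $R$ is well defined — in particular that the choice of $OR$-representative of $\Gamma$, and the auxiliary choices made in the gluing (positions of the contractible disks inside a chamber, which boundary disk of a chamber is glued to which edge, paths realizing the attaching), all lead to ambiently isotopic ribbon handlebodies; here one uses that any two properly embedded disks with the same boundary behaviour in a chamber are isotopic, and that the immersed contractible disk can be slid freely within its chamber. (2) Check $C \circ R = \mathrm{id}$: this is essentially immediate since $R$ is built to realize the prescribed chambers and ribbon disks, so reading $C$ off the result returns the original vertices, loops, edges, and decorations; the only point requiring care is that the sign convention in the definition of $C$ matches the framing convention chosen in $R$, which is a finite orientation bookkeeping check. (3) Check $R \circ C = \mathrm{id}$: given a ribbon handlebody $H$, decompose it into chambers along the essential preimages of its ribbon disks as in the paragraph preceding the lemma, note that each chamber is a $3$--ball with handles and boundary holes hence is determined up to ambient isotopy by its genus and number of holes, and that each ribbon disk is determined up to isotopy by its essential boundary disk together with the isotopy class of its contractible preimage inside the relevant chamber (which is determined, since the chamber is a handlebody, by at most the framing datum recorded by the overline); reassembling via $R$ therefore recovers $H$ up to ambient isotopy.

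The main obstacle I expect is step (1)–(3) at the level of \emph{rigidity of the pieces}: one must argue carefully that a chamber — a $3$--ball with a finite number of $1$--handles attached and a finite number of boundary disks removed — is determined up to orientation-preserving homeomorphism (indeed up to ambient isotopy in $X^4$, after fixing its abstract type) by the two numbers genus and number-of-holes, and, more delicately, that the \emph{way} the chambers are glued back together and the way the contractible disks sit inside them do not carry hidden invariants beyond what $\Gamma$ records. The potential subtlety is that gluing handlebodies along boundary disks could a priori depend on a choice of gluing homeomorphism of each disk, and that the contractible preimage, being only immersed, might be knotted inside its chamber in a way not captured by the overline; the resolution is that a disk has a contractible mapping class group (so the gluing is rigid) and that an embedded disk in a handlebody bounding a prescribed curve is unique up to isotopy, so the only residual freedom is orientation, exactly the $\pm$ recorded by the overline and the $OR$ equivalence. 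Making these isotopy statements precise, and confirming that the $OR$ quotient on the $w$--graph side corresponds exactly to the ``arbitrarily-oriented edge'' freedom on the topological side, is where the real work lies.
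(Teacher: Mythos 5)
Your overall skeleton (construct an explicit gluing map $R$ inverse to $C$, then verify $C\circ R=\mathrm{id}$ and $R\circ C=\mathrm{id}$ by a rigidity argument) matches the structure of the argument the paper invokes, which is deferred to the string-link case treated in an earlier reference. However, the rigidity input you supply is not the right one, and this is where the real content of the lemma lives. The uniqueness you need is \emph{ambient} isotopy uniqueness in the $4$--manifold $X^4$ of the embedded configuration of chambers and connecting $1$--handles, and your justifications (contractibility of the mapping class group of a disk, uniqueness up to isotopy of an embedded disk in a $3$--dimensional handlebody bounding a prescribed curve) are intrinsic $3$--dimensional statements: they say the abstract handlebody obtained by gluing is well defined, but they say nothing about whether two embeddings of that handlebody in $X^4$ with the same combinatorics are ambiently isotopic. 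In particular, your assertion that a chamber ``is determined up to ambient isotopy in $X^4$ by its genus and number of holes'' is precisely the nontrivial claim to be proved, not a fact one can quote; a priori a $3$--dimensional solid handlebody, or a $1$--handle joining two chambers, could sit in $4$--space in inequivalent ways.

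The missing idea, which is the one the paper's proof rests on, is the following $4$--dimensional unknotting statement: given embedded $4$--balls in $S^4$ with paired embedded $3$--balls on their boundaries, there is a unique way up to ambient isotopy to join these $3$--balls by disjoint $3$--dimensional $1$--handles. This is proved by observing that such a $1$--handle is determined by its $1$--dimensional \emph{framed} core; cores are arcs in a simply connected $4$--manifold, hence unique up to isotopy by general position, and the residual framing choice lives in a simply connected space (the paper identifies it with $S^2$, the unit sphere of the normal bundle of the core), so all framings are isotopic. Without this input your step (3) does not close: two ribbon handlebodies with the same image under $C$ could conceivably differ by a knotted or differently framed handle, and nothing in your argument rules this out. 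Once this lemma is in place, the rest of your plan (well-definedness of $R$ with respect to the $OR$ ambiguity, sliding contractible preimages within their chambers, and the sign bookkeeping) goes through essentially as you describe.
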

\begin{proof}
This follows from the exact same argument as in the proof of \cite[Prop. 3.7]{Aud_HdR}, which deals with the string link case. 
Roughly speaking, the proof is based on the fact that, given embedded
$4$--balls in $S^4$ with a finite number of paired embedded
$3$--balls on their boundary, there is a
unique way, up to ambient isotopy, to connect these $3$--balls with
disjoint $1$--handles. These $1$--handles are indeed characterized by
their $1$--dimensional framed core; and since the framing takes values
in $S^2$, the unit sphere of the normal bundle of the core, which is
simply connected, all framing are isotopic. 
\end{proof}

Next, consider the map 
\[
\partial:\big\{\textrm{ribbon handlebodies}\big\}\to
\big\{\textrm{ribbon surface-links}\big\}, 
\]
which sends a ribbon handlebody to its boundary. 
\begin{prop}\label{prop:Tube}
The composite $\partial\circ C^{-1}$, induces a well-defined, surjective map 
\[
\Tube:\big\{\textrm{welded graphs}\big\}\to \big\{\textrm{ribbon surface-links}\big\}, 
\]
sending a type $\big((m_1,b_1),\ldots,(m_\ell,b_\ell)\big)$ welded graph to an embedded $\sqcup_{i=1}^\ell \Sigma_i$, where $\Sigma_i$ is a genus $b_i$ surface with $m_i$ disjoint disks removed. 
\end{prop}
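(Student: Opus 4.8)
The plan is to establish the three assertions of Proposition~\ref{prop:Tube} in turn: well-definedness, the topological type of the image, and surjectivity. The composite $\partial\circ C^{-1}$ is a priori only defined on \emph{reduced} $w$--graphs modulo $OR$, so the first task is to show that it descends to the quotient $\wGraph$. To this end I would first check that every welded graph has a reduced representative: starting from an arbitrary $w$--graph, use $\textrm{Split}$ moves to break every edge decoration into one-letter words, use $\textrm{C}$/$\textrm{E}$ moves to make all empty edges into loops, and pull marked vertices off to univalent position via $\textrm{E}$ moves, exactly as in the proof of Lemma~\ref{lem:PC}. Then, for each generating move among $\textrm{C},\textrm{OR},\textrm{S},\textrm{R1},\textrm{R3}$ (and the derived moves $\textrm{P},\textrm{Split}$), I would exhibit the corresponding modification of a ribbon handlebody and check, via the bijection $C$ of Lemma~\ref{lem:CBij}, that the boundary surface-link is unchanged up to ambient isotopy. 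Concretely: $\textrm{OR}$ is already quotiented out; $\textrm{C}$/$\textrm{E}$ correspond to sliding a ribbon disk's essential preimage across a chamber wall, or merging/splitting chambers, which does not change $\partial H$; $\textrm{S}$ corresponds to a global reparametrization of a chamber (conjugating all framings emanating from it) and is realized by an isotopy of $H$; $\textrm{R1}$ and $\textrm{R3}$ correspond to the ribbon analogues of the classical Reidemeister moves on the simple projection, i.e.\ finger moves and triple-point-free sliding of ribbon bands, none of which affect the boundary.

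For the statement about the topological type of $\Tube(\Gamma)$, I would argue directly from the description of chambers given just before Lemma~\ref{lem:CBij}. Fix a reduced representative and look at one connected component $\Gamma_i$ of type $(m_i,b_i)$. The unmarked vertices are chambers, each a $3$--ball-with-handles-and-holes; the empty loops account for the genus of each chamber; the non-empty edges are the ribbon disks, each gluing two boundary disks of chambers. Taking the boundary of the union of these chambers along the essential disks, one sees that $\partial$ of this piece is obtained from a disjoint union of spheres (one per chamber, minus its essential holes, genus contributing handles) by tubing along the ribbon disks; an Euler-characteristic count, together with the fact that each chamber's genus-$g$ contributes $g$ to the total and each tube is orientation-compatibly attached (this is precisely what the overlining convention in $C$ records), yields an oriented surface of genus exactly $b_i=\dim H_1(\Gamma_i)$; and the $m_i$ marked (univalent) vertices, which sit on $H\cap\partial B^4$, produce exactly $m_i$ removed disks. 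So the component is a genus-$b_i$ surface with $m_i$ punctures, as claimed.

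Surjectivity is the cleanest part: given any ribbon surface-link $L$, choose a ribbon filling $H$ (which exists by definition of ``ribbon surface-link''), set $\Gamma:=C(H)$, a reduced $w$--graph, and then $\Tube(\Gamma)=\partial\circ C^{-1}(C(H))=\partial H = L$. One only needs to note that the type of $\Gamma$ matches the topological type of $L$, which follows from the computation in the previous paragraph applied in reverse.

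The main obstacle will be the well-definedness step, specifically verifying that moves $\textrm{R1}$ and $\textrm{R3}$ (and the way $\textrm{S}$ interacts with edge decorations involving the central vertex) are realized by ambient isotopies of the ribbon handlebody rather than merely by isotopies of its boundary. The subtlety is bookkeeping: under $C$, an $\textrm{R3}$ move alters an edge decoration by inserting the word $\overline w\,\overline a w b$, and one must trace through exactly how this corresponds to pushing one ribbon band's contractible preimage through the region cut out by two others, keeping track of orientations so that the overlining convention in the definition of $C$ stays consistent. I expect this to be the place where one either invokes a known correspondence (the welded-to-broken-surface-diagram dictionary underlying Satoh's original Tube map, extended chamber-wise) or else does a careful local picture-chase; since the excerpt permits citing \cite{Aud_HdR} for the backbone of $C$, the cleanest route is to phrase $\textrm{R1}$, $\textrm{R3}$ and the decorated $\textrm{S}$ as compositions of the elementary handlebody moves whose invariance on $\partial H$ is manifest, and to relegate the remaining diagram-chases to Section~\ref{sec:ExtraMoves} alongside the proofs of the derived moves.
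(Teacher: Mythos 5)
Your overall architecture matches the paper's: reduce every welded graph to a reduced representative via Split/C/E, check invariance of $\partial\circ C^{-1}$ move by move, and get surjectivity for free from the existence of a ribbon filling. The treatment of R1 and R3 by deferring to the known local ribbon-filling changes underlying the original Tube map is also exactly what the paper does (it cites the explicit local arguments of the string-link case). However, there is a genuine gap in your treatment of the stabilization move. You assert that S ``corresponds to a global reparametrization of a chamber\ldots and is realized by an isotopy of $H$.'' This cannot be right: by Lemma \ref{lem:CBij} the map $C$ is a \emph{bijection} from ribbon handlebodies (up to ambient isotopy) to reduced $w$--graphs modulo OR, so two reduced $w$--graphs that differ by a stabilization --- which, after reduction, have different numbers of non-empty edges, hence correspond to fillings with different numbers of ribbon disks --- can never have ambient isotopic fillings. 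The entire content of the invariance statement for S is that the two fillings, though genuinely distinct, have isotopic \emph{boundaries}. Moreover, S is not a conjugation of framings: inserting the prefix $a$ creates new ribbon disks whose contractible preimages lie in the specific chamber $C_0$ associated with $a$, one for each edge incident to the stabilized vertex.

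The paper's argument for S is the one you would need to supply: starting from the filling of the unstabilized graph, create a cancelling pair of oppositely-signed ribbon disks between the chamber $C$ (corresponding to the stabilized vertex) and the chamber $C_0$ (corresponding to the inserted letter), with both essential boundaries parallel to a single puncture of the sphere $\partial C$; this does not change the boundary surface. Then push one of the two ribbon disks through the $3$--ball $C$ until it splits into ribbon disks parallel to each of the remaining punctures of $\partial C$. The result is precisely the filling $C^{-1}$ of the stabilized graph, and since each step preserves the boundary up to isotopy, invariance follows. Your Euler-characteristic computation of the topological type and your surjectivity argument are fine (the paper dispatches both as immediate from the construction), but without the correct mechanism for S the well-definedness claim does not go through.
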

We shall sometimes call this map the \emph{generalized Tube
  map}\index{Tube map (generalized)},
since it naturally coincides with the known Tube map for welded (string) links, see Remark \ref{rem:genius} below.  
\begin{proof}
 Denote by $\widetilde T$ the composite $\partial\circ C^{-1}$. 
 Note that, thanks to contraction and split moves, every
$w$--graph is canonically equivalent to a reduced $w$--graph. 
Hence $\widetilde T$ naturally extends to all $w$--graphs. 
Let us check now that $\widetilde T$ is invariant under C, S, R1 and R3 moves.

  Invariance under a contraction move is clear, and since the moves are local, invariance
  under R1 and R3 moves follow from the same arguments as for the original Tube map; see for instance the explicit ribbon filling changes argument given in  the proof of \cite[Prop. 2.5]{IndianPaper}. 
  Let us now consider the stabilization move, which inserts a letter $x_0$ on the edges incident to a vertex $v$. 
  Via $\widetilde T$, the letter $x_0$ corresponds to a chamber $C_0$, and $v$ corresponds
  locally to a punctured sphere $S$ filled by a $3$--ball $C$; up to ambient isotopy, $C$ and $C_0$ can be assumed to stand close to each other.  
  Now, we can create\footnote{See, for instance the local ribbon filling change given in \cite[Fig. 16]{IndianPaper}.} two oppositely-signed ribbon disks between $C$ and $C_0$, such that the boundaries of the essential preimages are both
  parallel to a given puncture of $S$ -- see the left-hand side of the figure below, which represents the chamber $C$. 
  Then we push one of these ribbon disks through $C$, until it splits into ribbon disks parallel to every other
  punctures of $S$: 
    \[
\dessin{3.125cm}{TopStab_1}\ \xrightarrow{\ \dessin{1.25cm}{TopStab_0}\ }\ \dessin{3.125cm}{TopStab_2}. 
    \]
  The result is the image by $\widetilde T$, of the stabilization
  move. This completes the proof that $\widetilde T$ is invariant
  under all moves on welded graphs, so that the map $\widetilde T$
  induces the desired map Tube.
  
Surjectivity of Tube is clear, since any ribbon surface-link $L$ admits at least one ribbon filling $R_L$ by definition, $C(R_L)$ provides
then a welded graph $\Gamma_L$ such that $\Tube(\Gamma_L)=L$. 
The latter part of the statement, concerning the types of welded graphs, just follows by construction.
\end{proof}

\begin{remarque}\label{rem:genius}
As the name suggests, the definition of our generalized Tube map follows closely the original construction, as given in
\cite{IndianPaper,Aud_HdR}. Specifically, by construction we have that $\Tube\circ\psi_L$ and $\Tube\circ\psi_{SL}$, coincide with the original Tube maps for welded links and string links, respectively.

As a matter of fact, the generalized Tube map can be defined directly, in terms of broken
surface diagrams, as in \cite{Satoh} (we shall not recall here the langage of broken surface diagrams, but refer instead the reader to \cite{CKS}). 
This is illustrated by the following local pictures: 
  \[
  \dessin{1cm}{WtoB_11}\rightarrow\dessin{2.2cm}{WtoB_12}
  \hspace{1cm}
   \dessin{.8cm}{WtoB_31}\rightarrow\dessin{2.2cm}{WtoB_32}
  \hspace{1cm}
  \dessin{.8cm}{WtoB_41}\rightarrow\dessin{1.8cm}{WtoB_42}.
  \]
Here, the $j$--labeled bold circle on the rightmost picture
  denotes the $j$--th component of a prescribed unlink in
  $\partial B^4$. In general, for an arbitrary welded graph $\Gamma$, we obtain a diagram for $\Tube(\Gamma)$ by connecting the above pieces of broken surface diagrams with arbitrary embedded annuli, as prescribed by the welded graph $\Gamma$. 
\end{remarque}

The Tube map, as defined by Satoh on welded knotoids and knots \cite{Satoh}, and extended later to welded links and
string links \cite{ABMW}, served as a tool for studying ribbon $2$--knots, tori and $2$--string links. 
By Proposition \ref{prop:Tube}, welded graphs now provide a general setting for studying the whole class of ribbon surface-links.

In contrast, injectivity of the Tube map has remained an intriguing
question. In \cite{BH}, the Tube map has been proved to be injective on welded
braids, but Global Reversal moves were proved to be in its kernel for links \cite{Blake,IK}, and 
the question remains widely open for welded 
string links. In view of Theorem \ref{th:Embed}, we have the following. 
\begin{cor}\label{prop:hh}
  The Tube map is invariant under  $\Upsilon$ moves.
\end{cor}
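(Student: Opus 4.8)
The plan is to read this off directly from Theorem~\ref{th:Embed}, using the compatibility of the generalized Tube map with the maps $\psi_L$ and $\psi_{SL}$ recorded in Remark~\ref{rem:genius}. Recall that $\Upsilon$ moves are defined on welded links and on welded string links (Definition~\ref{def:HulaHoop}), and that on these objects the relevant Tube map factors as $\Tube\circ\psi_L$, resp. $\Tube\circ\psi_{SL}$, by Remark~\ref{rem:genius}. So it suffices to check that $\psi_L$, resp. $\psi_{SL}$, takes the same value on the two welded (string) links related by an $\Upsilon$ move, and then apply $\Tube$.

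First I would treat the string link case. Let $L$ and $L'$ be welded string links differing by a single $\Upsilon$ move. The second item of Theorem~\ref{th:Embed} states that $\psi_{SL}$ induces a bijection between welded graphs of type $(2,0)$ and welded string links up to $\Upsilon$ moves; in particular $\psi_{SL}$ descends to the quotient by $\Upsilon$ moves, so $\psi_{SL}(L)=\psi_{SL}(L')$ in $\wGraph$. Applying $\Tube$ and using Remark~\ref{rem:genius}, which identifies $\Tube\circ\psi_{SL}$ with the original Tube map on welded string links, shows that the Tube images of $L$ and $L'$ coincide as ribbon surface-links. The welded link case is identical: by the first item of Theorem~\ref{th:Embed}, $\psi_L$ factors through the quotient of welded links by $\Upsilon$ \emph{and} global reversal moves, so in particular $\psi_L(L)=\psi_L(L')$ whenever $L$ and $L'$ differ by an $\Upsilon$ move alone, and we conclude as before via $\Tube\circ\psi_L$.

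The genuine content behind this corollary does not lie in the deduction above but in Theorem~\ref{th:Embed} itself, whose proof is deferred to Section~\ref{sec:delamort}, and in the verification carried out in Proposition~\ref{prop:Tube} and Remark~\ref{rem:genius} that the generalized Tube map restricts to the classical one through $\psi_L$ and $\psi_{SL}$. Consequently the only points requiring attention here are bookkeeping ones: that ``the Tube map'' in the statement really means these composites (which is exactly Remark~\ref{rem:genius}), and that the $\Upsilon$ move, being a whole \emph{family} of local moves (Remark~\ref{rem:versions}), is covered in all its versions; the latter is immediate since Theorem~\ref{th:Embed} is phrased in terms of $\Upsilon$-equivalence as a whole rather than one version at a time. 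One could alternatively try a hands-on argument producing explicit ribbon filling changes that realize an $\Upsilon$ move, in the spirit of the R1 and R3 arguments in the proof of Proposition~\ref{prop:Tube}, but the factorization argument is shorter and concentrates all the difficulty in Theorem~\ref{th:Embed}.
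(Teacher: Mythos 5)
Your proposal is correct and matches the paper's intended argument: the paper offers no separate proof, simply prefacing the corollary with ``In view of Theorem~\ref{th:Embed}'', which is precisely your deduction that $\psi_L$ and $\psi_{SL}$ descend to the quotients by $\Upsilon$ moves and that the classical Tube map factors as $\Tube\circ\psi_L$, resp. $\Tube\circ\psi_{SL}$, per Remark~\ref{rem:genius}. You also correctly locate all the real work in the proof of Theorem~\ref{th:Embed} (whose first step is exactly the verification that $\psi_{SL}$ is invariant under $\Upsilon$ moves).
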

We conjecture that, except for some very specific cases (see Section \ref{sec:ExtraMoves2}), 
the $\Upsilon$ moves are not induced by the usual welded moves, and hence provide a new large class of
non trivial moves in the kernel of the Tube map for welded links and string links.
As a matter of fact, welded graphs intrinsically contain all the
known
moves under which the Tube map is invariant.

\section{Wirtinger groups, peripheral systems and reduced quotient}\label{sec:Wirtinger}

\subsection{Wirtinger groups}
The fundamental group of the complement is an essential invariant in the topological study of embedded manifolds in codimension 2.
In the case of knots and links, any diagram provides a so-called Wirtinger presentation for this group. 
This diagrammatical point of view on the fundamental group generalizes to other situations, such as surface-links \cite{CKS} or welded knot theory \cite{Kauffman}.
The goal of the present section is to show that the theory of welded graphs can in fact be interpreted as a theory of Wirtinger presentations.
\begin{defi}
  A \emph{Wirtinger presentation} is a group presentation
  \[
\big\langle x_1,\ldots,x_k\ \big|\ R_1,\ldots,R_s\big\rangle
\]
with a finite number of generators, and a finite number of relations of the form $R=\overline x_jx_i^w$, where
$x_i$ and $x_j$ are two generators and $w$ is an element of the free group $F(x_1,\ldots,x_k)$.
A \emph{Wirtinger group}\index{Wirtinger group} is a group admitting a Wirtinger presentation. 
\end{defi}

One way to describe a Wirtinger presentation is by a decorated
oriented graph with one vertex per generator, and one $w$--decorated edge from $x_i$
to $x_j$ for each relation $\overline x_jx_i^w$. Such a decorated graph is
nothing but a $w$--graph, with no marked vertex. 
Note that performing an R1 move on this graph does not change the Wirtinger
presentation since in $F(x_1,\ldots,x_k)$, we have $\overline
x_jx_i^{x_iw}=\overline x_jx_i^{\overline x_iw}=\overline
x_jx_i^w$. More generally, we have the following.
 \begin{prop}\label{prop:SameSame}
  Welded graphs with no marked vertex are in one-to-one correspondence
  with Wirtinger presentations up to the following operations:
  \begin{itemize}
  \item $\big\langle x_1,\ldots,x_k\ \big|\
    R_1,\ldots,R_s,\overline x_{i_2}x_{i_1}\big\rangle
    \leftrightarrow \big\langle x_1,\ldots,x_{i_2-1},x_{i_2+1},\ldots,x_k\ \big|\
    {R_1}_{|x_{i_2}\to x_{i_1}},\ldots, {R_s}_{|x_{i_2}\to
      x_{i_1}}\big\rangle$;
  \item $\big\langle x_1,\ldots,x_k\ \big|\
    R_1,\ldots,R_s,\overline x_{i_1}x_{i_2}^w\big\rangle\leftrightarrow\big\langle x_1,\ldots,x_k\ \big|\
    R_1,\ldots,R_s,\overline x_{i_2}x_{i_1}^{\overline w}\big\rangle$;
  \item $\big\langle x_1,\ldots,x_k\ \big|\
    R_1,\ldots,R_s\big\rangle\leftrightarrow\big\langle x_1,\ldots,x_k\ \big|\
    {R_1}_{|x_{i_1}\to x_{i_1}^\alpha},\ldots,{R_s}_{|x_{i_1}\to
      x_{i_1}^\alpha}\big\rangle$;
  \item $\big\langle x_1,\ldots,x_k\ \big|\
    R_1,\ldots,R_s,\overline x_{i_2}x_{i_1}^w ,\overline x_{i_4}x_{i_3}^{x_{i_1}w},\big\rangle\leftrightarrow \big\langle x_1,\ldots,x_k\ \big|\
    R_1,\ldots,R_s,\overline x_{i_2}x_{i_1}^w ,\overline x_{i_4}x_{i_3}^{wx_{i_2}}\big\rangle$
  \end{itemize}
where $R_{|a\to b}$ denotes the word obtained from the word $R$ by replacing all occurrences of $a$ by $b$, and where $\alpha$ is any generator or inverse of  a generator.
\end{prop}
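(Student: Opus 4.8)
The plan is to make precise the ``dictionary'' vertex $\leftrightarrow$ generator, edge $(a,b,w)\leftrightarrow$ relation $\overline{x_b}x_a^{w}$ already used in the paragraph preceding the statement, and then to verify that, under this dictionary, the five generating moves C, OR, S, R1, R3 on $w$--graphs (Figure~\ref{fig:WeldedMoves}) correspond exactly to the four listed operations, the move R1 contributing nothing --- as was already observed. First I would define the map $\Phi$ sending a $w$--graph $\Gamma$ with $V^\circ_\Gamma=\emptyset$ to the presentation with one generator $x_v$ for each $v\in V_\Gamma$ and one relation $\overline{x_b}x_a^{w}$ for each edge $(a,b,w)\in E_\Gamma$, where under the identification $V_\Gamma\cong\{x_v\}$ the decoration $w\in F(V_\Gamma)$ is read in $F(x_1,\ldots,x_k)$; one then checks that the evident reverse assignment is a two-sided inverse, once one agrees --- harmlessly --- to view Wirtinger presentations up to reindexing of generators and relations, which matches the fact that the unmarked vertices and the edges of a $w$--graph carry no ordering. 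Since both sides are quotients by the equivalence relations generated by these elementary moves, resp.\ operations, it then suffices to show that $\Phi$ carries each generator of one relation to a finite composition of generators of the other, in both directions.

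The heart of the argument is the move-by-move check, which I would organize as follows.
\begin{itemize}
\item \emph{C and the first operation.} An empty edge $(a,b,1)$ is the relation $\overline{x_b}x_a$; the hypothesis $b\notin V^\circ_\Gamma$ is vacuous here, and the contraction deletes $b$ and renames $b\mapsto a$ everywhere --- that is, deletes the generator $x_b$ and substitutes $x_b\to x_a$ throughout.
\item \emph{OR and the second operation.} Reversing $(a,b,w)$ to $(b,a,\overline{w})$ sends the relation $\overline{x_b}x_a^{w}$ to $\overline{x_a}x_b^{\overline{w}}$, which is the second operation.
\item \emph{R1 and the identity.} One has $x_a^{x_aw}=x_a^{\overline{x_a}w}=x_a^{w}$ in the free group, so R1 leaves $\Phi(\Gamma)$ unchanged.
\item \emph{R3 and the fourth operation.} Here one is given two distinct edges $f=(a,b,w)$ and $e=(c,d,x_aw)$, the decoration of $e$ being $x_aw$; R3 replaces it by $wx_b$. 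On the presentation side $f$ reads $\overline{x_b}x_a^{w}$ and $e$ reads $\overline{x_d}x_c^{x_aw}$, which is exactly the left-hand pair $\overline{x_{i_2}}x_{i_1}^{w},\ \overline{x_{i_4}}x_{i_3}^{x_{i_1}w}$ of the fourth operation, sent to $\overline{x_{i_2}}x_{i_1}^{w},\ \overline{x_{i_4}}x_{i_3}^{wx_{i_2}}$; the two agree because $x_c^{x_aw}=x_c^{wx_b}$ as soon as $x_a^{w}=x_b$, a one-line free-group computation.
\item \emph{S and the third operation.} A stabilization at $b$ (all incident edges outgoing, $b$ absent from every decoration) replaces each relation $\overline{x_c}x_b^{w}$ coming from an edge $(b,c,w)$ by $\overline{x_c}x_b^{x_aw}=\overline{x_c}\big(x_b^{x_a}\big)^{w}$; under these hypotheses these are the only occurrences of $x_b$, so S is the third operation with $x_{i_1}=x_b$ and $\alpha=x_a$.
\end{itemize}

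For the reverse direction, the first, second and fourth operations are literally the moves C, OR and R3, and R1 is unused. For the third operation with an arbitrary $\alpha$, I would first apply OR moves to orient every edge at $x_{i_1}$ outwards and then apply the \emph{generalized stabilization} of Figure~\ref{fig:ExtraWeldedMoves} --- a consequence of C, OR, S, R1, R3 whose derivation is carried out in Section~\ref{sec:ExtraMoves} --- which performs $x_{i_1}\mapsto x_{i_1}^{\alpha}$ including on the occurrences of $x_{i_1}$ inside decorations; the case $\alpha=\overline{x_m}$ then follows since the equivalence relation on $w$--graphs is symmetric and this operation is inverse to the case $\alpha=x_m$. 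Hence $\Phi$ and $\Phi^{-1}$ both respect the two equivalence relations, and $\Phi$ descends to the claimed bijection. The one genuine obstacle is to make the R3 $\leftrightarrow$ fourth-operation and S $\leftrightarrow$ third-operation dictionaries faithful in full generality rather than on these primitive shapes only: one must be able to exhibit an arbitrary occurrence, inside an edge decoration, of a subword of the form $x_aw$ (resp.\ of $\overline{w}\,\overline{x_a}\,w\,x_b$), resp.\ to drop the restriction that $x_b$ be absent from all decorations; for this one appeals to the ``rephrased R3'', the Split move and the generalized stabilization of Figure~\ref{fig:ExtraWeldedMoves}. Granting those extra moves --- all established in Section~\ref{sec:ExtraMoves} --- Proposition~\ref{prop:SameSame} reduces to the bookkeeping above.
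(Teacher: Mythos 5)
Your proposal is correct and follows essentially the same route as the paper, whose entire proof is the one-sentence assertion that the four operations correspond respectively to contraction, orientation reversal, generalized stabilization and R3 (with R1 acting trivially, as noted just before the statement). You simply make explicit the dictionary and the move-by-move verification, including the same appeal to the generalized stabilization of Section~\ref{sec:ExtraMoves} for the third operation.
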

\begin{proof}
  It is straightforwardly checked that the first operation corresponds
  to a contraction move, the second  one to an orientation reversal, the third one 
  to a generalized stabilization, and the last one to a Reidemeister 3 move.
\end{proof}

Observe that, up to isomorphism, the four operations in Proposition \ref{prop:SameSame}, preserve the underlying group. 
For any welded graph $\Gamma$, there is
hence an associated Wirtinger group, which we denote by $G(\Gamma)$,
given by the presentation associated with $\Gamma$ (after
  having unmarked all marked vertices). 
As a matter of fact, this group is  the fundamental group of the complement of the associated ribbon surface-link via the generalized Tube map:

\begin{prop}\label{prop:TubePi1}
  For every welded graph $\Gamma$, the group $G(\Gamma)$ is isomorphic to  $\pi_1\big(X^4\setminus \Tube(\Gamma)\big)$.
\end{prop}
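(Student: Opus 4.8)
The plan is to reduce the statement to the classical computation of the fundamental group of the exterior of a ribbon surface-link from one of its ribbon fillings, and to match that computation term-by-term with the Wirtinger presentation $G(\Gamma)$. First I would invoke Lemma~\ref{lem:CBij} and the construction of $\Tube$ in Proposition~\ref{prop:Tube}: it suffices to treat a \emph{reduced} $w$--graph $\Gamma$, since every welded graph is canonically equivalent (via C and Split moves) to a reduced one, and these moves preserve $G(\Gamma)$ by Proposition~\ref{prop:SameSame}. So fix a reduced $w$--graph $\Gamma$, let $H=C^{-1}(\Gamma)$ be the corresponding ribbon handlebody, and let $L=\Tube(\Gamma)=\partial H$.

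Next I would compute $\pi_1(X^4\setminus L)$ from the ribbon filling $H$. The standard picture is: the chambers of $H$ are 3--balls (with genus, from the empty loops) embedded in $X^4$, joined along the essential preimages of the ribbon disks by ``tubes''; dually, the exterior $X^4\setminus L$ deformation retracts onto a $2$--complex obtained from the exterior of a trivial surface-link (a free group/product situation) by attaching one $2$--cell for each ribbon disk. Concretely, I would take one meridian generator $x_v$ for each chamber-vertex $v$ of $\Gamma$ (a small loop linking the corresponding sheet of $L$), and, for each empty loop at $v$, no relation is added (the genus just sits inside a chamber and is invisible to $\pi_1$ of the exterior, or contributes a generator that is killed — one must check which; in the ribbon handlebody the genus handles lie in the interior so they do not affect the exterior). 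Each ribbon disk $D$, connecting chambers $a$ and $b$ and having its contractible preimage inside the chamber labelled by the letter $c$ (with a sign), produces exactly one relation: passing a meridian of the $a$--sheet through the ribbon singularity conjugates it by the meridian $x_c$ of the piercing chamber, yielding $\overline{x_b}\,x_a^{x_c^{\pm1}}$ — which is precisely the relation attached to the edge $e_D=(a,b,c^{\pm1})$ in the reduced $w$--graph $\Gamma$. The sign convention on the letter $c$ recorded in the definition of the map $C$ is exactly the orientation bookkeeping (positive vs.\ negative basis of $X^4$) that fixes whether the conjugating meridian appears as $x_c$ or $\overline{x_c}$. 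Thus the Wirtinger presentation read off from $H$ is literally the presentation defining $G(\Gamma)$ after unmarking the marked vertices — and unmarking is harmless because marked vertices (for $X^4=B^4$) correspond to boundary components whose meridians coincide, up to conjugacy, with adjacent chamber meridians, hence add no new generator in the presentation of $\pi_1$.

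I would then handle the three ambient manifolds $S^4,\R^4,B^4$ uniformly: the argument above is purely local around $H$, and capping or removing a ball at infinity changes neither the exterior's homotopy type relative to a neighbourhood of $L$ nor the resulting presentation. For $X^4=B^4$ one additionally checks that the marked vertices/boundary unlink do not introduce independent generators, which is the content of Remark~\ref{rem:genius}'s broken-surface description (each marked vertex is capped by a disk in $\partial B^4$). Finally, an invariance check: since $\Tube$ is well defined on welded graphs (Proposition~\ref{prop:Tube}) and $G(\Gamma)$ is well defined (Proposition~\ref{prop:SameSame}), it is enough to verify the isomorphism for one representative, which is exactly the reduced one treated above; alternatively one notes directly that each move C, OR, S, R3 induces an isomorphism-preserving Tietze transformation on both sides simultaneously.

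The main obstacle I expect is \emph{the sign/orientation matching}: verifying that the handed conjugating meridian produced by pushing through a ribbon singularity is $x_c^{+1}$ exactly when the framed-basis condition in the definition of $C$ gives an un-overlined letter, and that this is consistent across all $X^4\in\{S^4,\R^4,B^4\}$. This is a careful but routine local computation — the sort of check already carried out for the classical Tube map in the references \cite{IndianPaper,Aud_HdR} — and I would organize it as a single local model of one ribbon disk joining two chambers, then appeal to naturality. A secondary subtlety is making precise that genus handles (empty loops of $\Gamma$) genuinely do not contribute to $\pi_1$ of the exterior; this follows because each such handle lies in the interior of its chamber $3$--ball and a small meridian of it bounds a disk in the complement of $L$.
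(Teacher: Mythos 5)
Your proposal is correct and follows essentially the same route as the paper: pass to a reduced representative, take the ribbon filling $H=C^{-1}(\Gamma)$, and read off a Wirtinger presentation of $\pi_1\big(X^4\setminus\partial H\big)$ with one meridian generator per chamber and one conjugation relation per ribbon disk (conjugating element given by the chamber containing the contractible preimage), which is exactly the presentation defining $G(\Gamma)$. The only difference is cosmetic: where you invoke a retraction onto a $2$--complex with one $2$--cell per ribbon disk, the paper justifies the same presentation by a general-position argument (loops meet $H$ transversally away from the ribbon disks, and generic homotopies cross ribbon disks in isolated points, each crossing producing a Wirtinger relation), using that $X^4\setminus H$ is simply connected.
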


\begin{proof}
  Let $\Gamma$ be a $w$--graph representative of a given welded graph. 
  As noted in the proof of Proposition \ref{prop:Tube}, this representative $\Gamma$ can be assumed to be reduced, so that we can consider the associated ribbon handlebody $H=C(\Gamma)$. Recall that $H$ is cut into chambers by ribbon disks, and
  that $\Tube(\Gamma)=\p H$. We observe that the desired presentation for 
  $\pi_1\big(X^4\setminus \Tube(\Gamma)\big)$ can be given in terms of
  the ribbon disks and the chambers of $H$. This follows from the three following facts:
  \begin{itemize}
  \item $H$ retracts to a $1$--dimensional graph in $X^4$, so that
    $X^4\setminus H$ is connected and simply connected;
  \item since $H$ is $3$--dimensional, and since ribbon disks are $2$--dimensional, a generic loop in $X^4\setminus
    \Tube(\Gamma)$ avoids all ribbon disks and intersects $H$ in a finite number of transverse points. 
    Moreover, two loops intersecting the same chamber once with same sign are homotopic. Hence $\pi_1\big(X^4\setminus
    \Tube(\Gamma)\big)$ is generated by a set of loops which is in bijection with chambers, 
    a chamber $c$ corresponding to the class of a loop intersecting $H$ exactly once, in the interior of $c$;
  \item a generic homotopy of loops  in $X^4\setminus
    \Tube(\Gamma)$ intersects the ribbon disks in a finite number of
    transverse points. Each intersection corresponds to a
    Wirtinger relation between the two generators associated with the
    chambers on either side of the intersected ribbon disk, one being
    equal to the conjugate of the other by the generator associated
     with the chamber containing the contractible preimage of the
    ribbon disk.
  \end{itemize}
Since ribbon disks and chambers of $H$ are in bijection with edges and
vertices of $\Gamma$, this presentation is nothing but the Wirtinger presentation of $G(\Gamma)$.
\end{proof}

In the rest of this section, we shall see that a Wirtinger presentation carries more information than just the associated group.

\subsection{Peripheral system}

The fundamental group of a codimension 2 embedding complement can be endowed with the data of so-called peripheral elements, 
which are given by loops staying essentially close to the embedded manifold. 
In the case of links, these elements are meridians and longitudes, and they form, together with the group, the peripheral system. 
Implicitly used by Dehn in 1914 to distinguish the right and the left-handed trefoil, peripheral systems were only
formally introduced by Fox \cite{Fox} in the early fifties and proved to be a complete
invariant for links by Waldhausen \cite{Waldo} in 1968.

This notion of peripheral system has a combinatorial counterpart for welded graphs, which we introduce now. 
Let $\Gamma$ be a $w$--graph, with connected components  $\Gamma_1,\ldots,\Gamma_\ell$. 
Denote by $b_i$ the first Betti number of $\Gamma_i$ and by $m_i$ its number of marked vertices. 

\begin{nota}\label{cvraimentjusteunenota}
  A combinatorial path between two vertices $v_1$ and $v_2$ on $\Gamma$, is a finite sequence of adjacent edges starting at $v_1$
  and ending at $v_2$. Note that, on each edge, the orientation given by the   $w$--graph and the orientation given by the path may differ. 
  With any combinatorial path $\mathfrak p=(e_1,\ldots,e_k)$, one can associate the element
  $w_{\mathfrak p}\in G(\Gamma)$ defined by the concatenated word $w_1^{\e_1}\ldots w_k^{\e_k}$, where $w_i$ is the label of $e_i$
and $\e_i\in\{\pm1\}$ is 1 if and only if the orientations of $e_i$ as an edge of $\Gamma$ and as a path segment of $\mathfrak p$ coincide.
\\
A combinatorial path corresponds in particular to a continuous path between two vertices and, reciprocally, any continuous path $\gamma$ between
two vertices is homotopic relative to the endpoints, to a combinatorial path $\mathfrak p_\gamma$. 
Note that two path representatives for $\gamma$ differ only by some back and forth along some edges, so that the associated element in $G(\Gamma)$ is well-defined.
We shall hence use simply the notation $w_\gamma$ for the element $w_{\mathfrak p_\gamma}\in G(\Gamma)$.
\end{nota}

\begin{remarque}\label{rk:Conjugate}
  It follows from iterated uses of Wirtinger relations that, if $\gamma$ is
  a path between two vertices $v_1,v_2\in\Gamma$, then
  $w_\gamma v_2=v_1w_\gamma$ in $G(\Gamma)$. This implies, in particular,
  that, for every $i\in\{1,\ldots,\ell\}$, any two vertices of
  $\Gamma_i$, seen as elements of $G(\Gamma)$, are conjugate, and that if $\gamma$ is a loop based at some vertex $v$,
  then $v$ and $w_\gamma$ commute. 
\end{remarque}

\begin{defi}
\emph{Meridians} of $\Gamma$ are defined as the generators of $G(\Gamma)$, that is, the vertices of $\Gamma$. 
Vertices on the $i$--th component of $\Gamma$ are sometimes more precisely called \emph{$i$--th meridians}.

A \emph{basing} for $\Gamma$ is the data $\big((\mu_i)_i,\{l^i_1,\cdots ,l^i_{m_i}\}_{i,j},(a^i_1,\cdots,a^i_{n_i})_{i}\big)$ where, for every $i\in\{1,\ldots,\ell\}$: 
\begin{itemize}
\item $\mu_i$ is a choice of a meridian on $\Gamma_i$;
\item $l^i_1,\ldots,l^i_{b_i}$ are $b_i$ loops on $\Gamma_i$ based at $\mu_i$, that generate $\pi_1(\Gamma_i,\mu_i)$;
\item $a^i_1,\ldots,a^i_{m_i}$ are $m_i$ paths that connect $\mu_i$ to each marked vertex of $\Gamma_i$.
\end{itemize}
\end{defi}
  Note that, since connected components and marked points of $\Gamma$
  are ordered, meridians and $a_i^j$ paths  are also ordered,
  whereas $l_i^j$ loops are not.
\begin{remarque}\label{ohlabelleremarque}
If $\Gamma$ has a marked vertex on each component, then there is a canonical choice for the chosen meridians, namely the minimal marked vertex on each component.
\end{remarque}

\begin{defi}
  Let $\Gamma$ be endowed with a basing $\big((\mu_i)_i,\{l^i_j\}_{i,j},(a^i_j)_{i,j}\big)$. 

  For every $i\in\{1,\ldots,\ell\}$, we define
  \begin{itemize}
  \item \emph{preferred $i$--th loop-longitudes}, as
    the elements $\mu_i^{-|\lambda^i_j|_i}\lambda^i_j\in G(\Gamma)$, for $j\in\{1,\ldots,b_i\}$;
  \item \emph{preferred $i$--th arc-longitudes}, as the elements
    $\mu_i^{-|\alpha^i_j|_i}\alpha^i_j\in G(\Gamma)$, for $j\in\{1,\ldots,m_i\}$; 
  \end{itemize}
  where $\lambda^i_j$, resp. $\alpha^i_j$, denote the element $w_{l^i_j}$, resp. $w_{a^i_j}$, of $G(\Gamma)$ 
  and where $|w|_i$  is the algebraic number of letters in the word $w\in G(\Gamma)$ that correspond to vertices in
    $\Gamma_i$. A \emph{longitude} is either a (preferred)  loop or an arc-longitude.
  \end{defi}
 As noted above, meridians and arc-longitudes are gathered in ordered tuples, whereas loop-longitudes are elements of unordered sets.
\begin{prop}\label{prop:WeldedBasingIso}
    Let $\Gamma'$ be a $w$--graph obtained from $\Gamma$ by a welded move. There is a canonical isomorphism
  $\varphi:G(\Gamma)\to G(\Gamma')$, which sends any  basing of  $\Gamma$ to a basing for $\Gamma'$. 
\end{prop}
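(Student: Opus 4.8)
The plan is to verify the claim move by move: since every welded move is (by definition) one of C, OR, S, R1, R3 —- or, more conveniently, one of the generating moves together with the extra moves of Figure~\ref{fig:ExtraWeldedMoves} —- it suffices to exhibit, for each such move $\Gamma\rightsquigarrow\Gamma'$, an explicit isomorphism $\varphi\colon G(\Gamma)\to G(\Gamma')$ and to check that it carries a chosen basing of $\Gamma$ to a basing of $\Gamma'$. For the group isomorphisms themselves we may simply invoke Proposition~\ref{prop:SameSame}: each of C, OR, (generalized) S, and R3 was there shown to preserve the underlying group up to a canonical isomorphism, and R1 does not change the presentation at all; so $\varphi$ is in each case already pinned down, and the only new content is its effect on the combinatorial data $\big((\mu_i)_i,\{l^i_j\}_{i,j},(a^i_j)_{i,j}\big)$.

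First I would dispose of R1 and R3, which are the easy cases. An R1 move does not alter $V_\Gamma$ at all, so meridians are literally unchanged, and since the underlying graph is unchanged the loops $l^i_j$ and paths $a^i_j$ can be taken to be the very same combinatorial paths; one only checks (using $\overline x_j x_i^{x_i w}=\overline x_j x_i^{w}$, exactly as in the remark preceding Proposition~\ref{prop:SameSame}) that the associated longitude elements $w_{l^i_j}$, $w_{a^i_j}$ are sent to the corresponding elements of $G(\Gamma')$ under $\varphi=\id$. For R3, the vertex set and the graph are again unchanged; only one edge decoration changes, from $aw$ to $wb$, and the identity map on generators is a group isomorphism by Proposition~\ref{prop:SameSame}. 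A combinatorial path traversing the modified edge has its associated word changed, but by Remark~\ref{rk:Conjugate} (applied along the short path across the edge $(a,b,w)$) the change is by an inner automorphism that is already absorbed into the canonical isomorphism, so the image basing is a basing of $\Gamma'$.

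Next come C (equivalently E) and S, which require a little bookkeeping but no genuine difficulty. For a contraction collapsing $b$ onto $a$ via an empty edge, $\varphi$ is the quotient map identifying the generator $b$ with $a$ (Proposition~\ref{prop:SameSame}, first operation); a chosen meridian $\mu_i$ equal to $b$ is replaced by $a$, and any loop or arc path running through the contracted edge simply drops that (empty) edge, which contributes the unit to the longitude word, so the basing data maps across cleanly. For the inverse move E one makes the (harmless) choices of how to route the existing loops/paths through the new empty edge and how to relabel occurrences of $v$; any such choice yields a valid basing. For a (generalized) stabilization at a vertex $b$ with prefix $a$, $\varphi$ is the isomorphism $x_i\mapsto x_i$ for $i\ne$ the index of the conjugated generators and $x\mapsto x^a$ for the affected ones (Proposition~\ref{prop:SameSame}, third operation); the meridians are unchanged as vertices but their images in $G(\Gamma')$ are the appropriate conjugates, and one checks directly that the longitude words $w_{l^i_j}$, $w_{a^i_j}$ transform by the same conjugation, so that the preferred longitudes $\mu_i^{-|\cdot|_i}(\cdot)$ match up —- here one uses that $|\cdot|_i$, the algebraic $\Gamma_i$-letter count, is unchanged because stabilization inserts the prefix $a$ an equal number of times with $+$ and with $-$ sign along any closed or based path, hence with zero net algebraic contribution.

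The main obstacle will be none of the group-theoretic identities —- those are the routine calculations I am deferring —- but rather the \emph{canonicity} of $\varphi$ and the \emph{coherence} of the induced map on basings, i.e.\ making precise in what sense ``the'' isomorphism and ``the'' image basing are well defined despite the choices made when a move creates new vertices or splits sub-arcs. I would handle this by the same device used already for $w_\gamma$ in Notation~\ref{cvraimentjusteunenota}: two admissible routings of a loop or path through the new vertices differ by back-and-forth travel along empty edges, hence give the same element of $G(\Gamma')$, so the basing is well defined up to the ambiguity that is already present in the definition of a basing. Assembling these observations move by move yields the canonical isomorphism $\varphi$ and shows it sends basings to basings, completing the proof.
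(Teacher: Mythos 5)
Your treatment of the R1 move is the one place where the argument genuinely breaks down, and unfortunately it is precisely the case that carries all the content of the paper's proof (the paper dismisses C, OR, S and R3 as clear and devotes its entire proof to R1). You claim that under an R1 move the longitude elements $w_{l^i_j}$, $w_{a^i_j}$ are ``sent to the corresponding elements'' via $\varphi=\id$, citing the identity $\overline x_j x_i^{x_i w}=\overline x_j x_i^{w}$. That identity only shows that the Wirtinger \emph{presentation}, hence the group, is unchanged; it says nothing about the longitude words. If a basing path $\gamma$ on $\Gamma_i$ traverses the edge $e$ whose decoration changes from $w$ to $a^{\e}w$, then $w_\gamma$ changes from $u_1wu_2$ to $u_1a^{\e}wu_2=(u_1a^{\e}\overline{u}_1)\,w_\gamma$, which is a genuinely different element of $G(\Gamma)$ (a meridian is nontrivial in general). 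So the raw longitudes are \emph{not} preserved, and your assertion is false as stated.

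What actually saves the proposition is the normalization built into the preferred longitudes: by Remark \ref{rk:Conjugate}, the inserted letter $a^{\e}$ can be slid to the front of the word, where it becomes $\mu_i^{\e}$; simultaneously the count $|w_\gamma|_i$ changes by $\e$ (since $a$ is a vertex of $\Gamma_i$), so the prefix $\mu_i^{-|w_\gamma|_i}$ in $\mu_i^{-|w_\gamma|_i}w_\gamma$ changes by $\mu_i^{-\e}$ and exactly cancels the new factor. Without this compensation the peripheral system would pick up a stray meridian factor under R1 and Corollary \ref{cor:tex} would fail. You invoke the $|\cdot|_i$--normalization only in your stabilization paragraph (where, as you correctly note, the net contribution is zero anyway) and never in the R1 case, where it is indispensable. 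The remainder of your proposal (C, OR, S, R3, and the well-definedness of routings through new empty edges) is consistent with what the paper regards as routine, but the R1 step needs to be redone along the lines above.
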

\begin{proof}
  The statement is clear for contraction, orientation reversal,
  stabilization and Reidemeister 3 moves. A Reidemeister 1 move
  applied on an edge $e$  introduces some letter $v^{\pm 1}$ in the longitude word $w_\gamma$ associated
  with any path $\gamma$ passing through $e$,  and changes $|w_\gamma|_i$ by $\pm 1$. Using Remark \ref{rk:Conjugate},
  $v^{\pm 1}$ can be replaced by a $\mu_i^{\pm1}$ factor at the beginning of the longitude
  word $w_\gamma$, so that it compensates  the change of $|w_\gamma|_i$.
\end{proof}

\begin{defi}\label{def:PeripheralSystems}
A \emph{peripheral system}\index{peripheral system} for a welded graph $\Gamma$ is the data
$$\big(G(\Gamma),(\mu_i)_i,\{w_{l^i_j}\}_{i,j}, (w_{a^i_j})_{i,j}\big),$$ 
\noindent where $\big((\mu_i)_i,\{l^i_1,\cdots ,l^i_{m_i}\}_{i,j},(a^i_1,\cdots,a^i_{n_i})_{i}\big)$ is a basing
for $\Gamma$.
Two peripheral systems for $\Gamma$ are \emph{equivalent}
if they are related by a finite sequence of the following operations:
\begin{itemize}
\item  replacing $\big(G,(\mu_i)_i,\{\lambda^i_j\}_{i,j}, (\alpha^i_j)_{i,j}\big)$ by $\big(G',(\mu'_i)_i,\{{\lambda'_j}^i\}_{i,j}, ({\alpha'_j}^i)_{i,j}\big)$, 
when there exists an  isomorphism $\varphi:G\to G'$, some permutations
  $\sigma_i\in\mathfrak S_{b_i}$ and some elements $w_1,\ldots,w_\ell\in G'$ such that: 
  \begin{itemize}
  \item $\mu'_i=\varphi(\mu_i)^{w_i}$ for all $i\in\{1,\ldots,\ell\}$;
  \item ${\lambda'_{\sigma_i(j)}}^i=\varphi(\lambda^i_{j})^{w_i}$ for all $i\in\{1,\ldots,\ell\}$ and $j\in\{1,\ldots,m_i\}$;
  \item ${\alpha'_j}^i=\overline w_i\varphi(\alpha^i_j)$ for all  $i\in\{1,\ldots,\ell\}$ and $j\in\{1,\ldots,n_i\}$;
  \end{itemize}
\item replacing a loop-longitude by its inverse; 
\item precomposing an $i$--th arc or loop-longitude by some other $i$--th loop-longitude.
\end{itemize}
\end{defi}
We emphasize that peripheral systems depend on a choice of a basing, which corresponds to the choice of a vertex $v_i$ for each connected component $\Gamma_i$, together with a choice of a generating set for $\pi_1(\Gamma_i,v_i)$ and of paths from $v_i$ to each marked vertex.  
But, since $\pi_1(\Gamma_i,v_i)$ is a free group, if follows from Nielsen transformations that equivalence classes are precisely defined to encompass any change of such choices. 
By Proposition \ref{prop:WeldedBasingIso}, we thus have: 
\begin{cor}\label{cor:tex}
Peripheral systems up to equivalence, are well-defined for welded graphs.
\end{cor}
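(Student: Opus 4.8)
The plan is to unwind the two independent choices hidden in the construction of a peripheral system --- the choice of a $w$--graph representative of the welded graph, and the choice of a basing of that representative --- and to show that each of them only moves the peripheral system within its equivalence class. So I would fix a welded graph and two of its peripheral systems $P,P'$, coming from $w$--graph representatives $\Gamma,\Gamma'$ equipped with basings. Since $\Gamma$ and $\Gamma'$ define the same element of $\wGraph$, they are joined by a finite sequence of moves among C, OR, S, R1, R3. Feeding this sequence into Proposition~\ref{prop:WeldedBasingIso} and composing the isomorphisms produced at each step yields a canonical isomorphism $\varphi\colon G(\Gamma)\to G(\Gamma')$ carrying the basing underlying $P$ to a basing of $\Gamma'$; by the very first operation of Definition~\ref{def:PeripheralSystems} (take the isomorphism $\varphi$, all permutations $\sigma_i$ trivial, and all $w_i=1$) the peripheral system of that basing is equivalent to $P$. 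This reduces the claim to the statement that two basings of one and the same $w$--graph $\Gamma$ give equivalent peripheral systems.

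Next I would compare two basings of $\Gamma$ by splitting their difference into the three kinds of data a basing records, and treating them one at a time. A change of the chosen meridian on a component $\Gamma_i$, say from $\mu_i$ to $\mu_i'$, is handled by picking a combinatorial path $\gamma$ from $\mu_i$ to $\mu_i'$: Remark~\ref{rk:Conjugate} gives $w_\gamma\mu_i'=\mu_i w_\gamma$, i.e.\ $\mu_i'=\mu_i^{w_\gamma}$, while the new generating loops and connecting paths become, respectively, $w_\gamma$-conjugates of loops based at $\mu_i$ and $\gamma$-precompositions of paths from $\mu_i$; reading off longitudes, this is exactly the first operation of Definition~\ref{def:PeripheralSystems} with $\varphi=\id$ and $w_i=w_\gamma$. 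A change of the chosen free generating set of $\pi_1(\Gamma_i,\mu_i)$ is handled by the fact that, this group being free, any two free generating tuples are related by a finite sequence of elementary Nielsen transformations, and each such transformation is mirrored among the allowed operations --- transpositions by the permutations $\sigma_i$, inversion of a generator by ``replacing a loop-longitude by its inverse'', and the multiplicative moves by ``precomposing a loop-longitude by another loop-longitude''. Finally, replacing one of the paths $a^i_j$ by another path to the same marked vertex amounts to precomposing it with a loop based at $\mu_i$; expressing that loop's class as a word in the chosen generating loops realizes the change by finitely many applications of ``precomposing an arc-longitude by a loop-longitude''. Composing these three types, any change of basing is realized by the operations of Definition~\ref{def:PeripheralSystems}, which finishes the argument.

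The step I expect to require genuine care --- and the only one that is not essentially formal --- is the bookkeeping of the meridian-power corrections $\mu_i^{-|w|_i}$ entering the definition of the \emph{preferred} longitudes: one must verify that when geometric loops and paths are precomposed, inverted, or conjugated, the corresponding preferred longitudes transform exactly as the operations of Definition~\ref{def:PeripheralSystems} prescribe. This in turn reduces to two elementary facts, to be used repeatedly: the exponent function $|\cdot|_i$ is additive under concatenation of words, and each $\lambda^i_j=w_{l^i_j}$, being the label of a loop based at $\mu_i$, commutes with $\mu_i$ in $G(\Gamma)$ by Remark~\ref{rk:Conjugate}. Together these yield identities such as $(\mu_i^{-|\lambda^i_k|_i}\lambda^i_k)(\mu_i^{-|\alpha^i_j|_i}\alpha^i_j)=\mu_i^{-|\lambda^i_k\alpha^i_j|_i}\lambda^i_k\alpha^i_j$, which is precisely what makes the ``precomposition'' operations match the geometric changes of basing; once this is in place, the corollary follows by assembling the reduction and the three cases above.
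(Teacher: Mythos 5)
Your proposal is correct and takes essentially the same route as the paper: invariance under the welded moves via Proposition~\ref{prop:WeldedBasingIso}, and invariance under change of basing via Nielsen transformations in the free groups $\pi_1(\Gamma_i,\mu_i)$ together with the conjugation/precomposition operations of Definition~\ref{def:PeripheralSystems} (the paper states this only tersely). One small bookkeeping correction in the change-of-meridian step, of exactly the kind you flagged: because of the $\mu_i^{-|\cdot|_i}$ corrections in the preferred longitudes, the element realizing the first operation of Definition~\ref{def:PeripheralSystems} is $w_i=\mu_i^{-|w_\gamma|_i}w_\gamma$ rather than $w_\gamma$ itself.
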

From now on, and by abuse of notation, peripheral systems shall be identified with their equivalence classes.

\begin{remarque}\label{rk:TopologicalCase}
  For a ribbon surface-link $S$, given with a ribbon filling, meridians are
  loops that meet the ribbon filling only once, and longitudes are defined
  by pushing curves on $S$ off the surface. In light of
  the proof of Proposition \ref{prop:TubePi1}, longitudes can
  be characterized by their intersections with the ribbon filling. But
  curves on $S=\Tube(\Gamma)$ induce curves on $\Gamma$, and
  the intersections of the curves push-outs with the ribbon filling, can
  be read out of the curves on $\Gamma$. These precisely
    coincide with the 
    longitudes of $\Gamma$. It
  follows that the Tube map preserves the whole peripheral
  system, in the sense that the peripheral system of a welded graph
  $\Gamma$ is equal  to the peripheral system of $\Tube(\Gamma)$. 
\end{remarque}

\begin{remarque}\label{rk:MWF}
  Suppose that a welded graph has only simply connected components, each having at least one marked vertex. 
  This is for example the case for type $(2,0)$ welded graphs, which
  are intimately related to welded string links by Theorem \ref{th:Embed}.
  In this situation, there is a preferred representative in the equivalence class of peripheral
  systems, corresponding to meridians being the minimal
  marked vertices on each component (see Remark \ref{ohlabelleremarque}). 
  Since there are no loop-longitudes in this case, the peripheral
  system reduces to the data of the associated group, together with
  $\big(\sum_{i=1}^\ell m_i\big)-\ell$ elements, corresponding to paths from the minimal marked vertices to each of other marked vertices on the same component.
\end{remarque}

In light of Remarks \ref{rk:TopologicalCase} and \ref{rk:MWF}, it is clear that, for a string link $L$, the notion of peripheral system for $L$ and the one for $\psi_{SL}(L)$ coincide.
Since by Waldhausen's theorem, peripheral systems are classifying for classical knots, links and
string links; this implies the following result.
\begin{cor}
  Classical string links embed into welded graphs. 
\end{cor}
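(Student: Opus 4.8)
The plan is to show that the composite map that sends a classical string link to the welded graph $\psi_{SL}(L)$ (via the standard inclusion of classical string links into welded string links) is injective, using the peripheral system as the go-between invariant. No new geometry is needed: everything is assembled from Corollary~\ref{cor:tex}, the remarks preceding the statement, and Waldhausen's classification theorem.

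First I would record the two ingredients that make the argument run. On the one hand, by Corollary~\ref{cor:tex} the equivalence class of the peripheral system (in the sense of Definition~\ref{def:PeripheralSystems}) is a well-defined invariant of a welded graph, independent of the choice of $w$--graph representative and of basing. On the other hand, as explained just before the statement, for a classical string link $L$ the classical topological peripheral system of $L$ --- meridians and (normalized) longitudes in $\pi_1$ of the exterior --- coincides with the combinatorial peripheral system of $\psi_{SL}(L)$. Here one uses that a classical string link diagram produces the usual Wirtinger presentation, that $\psi_{SL}$ reads off exactly this presentation together with the longitudinal words along the strands, that under $\psi_{SL}$ the marked vertices are precisely the interval endpoints, and that the preferred representative of Remark~\ref{rk:MWF} (minimal marked vertex as meridian on each component, no loop-longitudes since the components are contractible) matches the classical normalization of longitudes; one may also invoke Remark~\ref{rk:TopologicalCase} and Proposition~\ref{prop:TubePi1} to phrase this through $\Tube\circ\psi_{SL}$.

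The argument then is: suppose $L$ and $L'$ are classical string links with $\psi_{SL}(L)=\psi_{SL}(L')$ as welded graphs. Then these two welded graphs have the same equivalence class of peripheral system, hence by the coincidence above $L$ and $L'$ have equivalent peripheral systems in the classical sense. By Waldhausen's theorem the peripheral system is a complete invariant of classical links and string links, so $L$ and $L'$ are equivalent classical string links. Thus the map is injective, i.e.\ classical string links embed into welded graphs.

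The only point requiring care is the bookkeeping in the second step: one must check that the equivalence of peripheral systems allowed by Definition~\ref{def:PeripheralSystems} --- group isomorphism, componentwise conjugation by an element $w_i$, reordering/inverting loop-longitudes, precomposition by loop-longitudes --- corresponds exactly to the latitude in the classical theory (change of basepoint on each component and Nielsen moves on the free $\pi_1$ of the underlying $1$--complex), and conversely identifies nothing more. Since each component here is contractible with at least one marked vertex there are no loop-longitudes, so via Remark~\ref{rk:MWF} this reduces to the standard fact that the classical peripheral system of a string link is canonically normalized, and no work is needed beyond citing Waldhausen. I would also note in passing that Theorem~\ref{th:Embed} is not required for this corollary --- the peripheral system already factors through $\psi_{SL}$ --- although, read together with Theorem~\ref{th:Embed}, the corollary says that two classical string links related by $\Upsilon$ moves must in fact be equal.
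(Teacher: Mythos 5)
Your proposal is correct and follows essentially the same route as the paper: identifying the classical peripheral system of a string link with the combinatorial peripheral system of its image under $\psi_{SL}$ (via Remarks~\ref{rk:TopologicalCase} and~\ref{rk:MWF}), and then invoking Waldhausen's theorem to conclude injectivity. Your extra bookkeeping remarks (no loop-longitudes, canonical normalization, independence from Theorem~\ref{th:Embed}) are consistent with what the paper leaves implicit.
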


By Theorem \ref{th:Embed}, this means that, should $\Upsilon$  moves
be non trivial, they would not be able to change the isotopy class of
a classical string link. 
\begin{remarque}\label{rk:links}
For links, the situation is slightly more intricate. 
Indeed, classical peripheral systems provide an orientation for each circular component, whereas welded graph peripheral systems do not. 
For instance, if a link $L_2$ is obtained from another link $L_1$ by mirror image and reversing all orientations, then $\psi_L(L_1)=\psi_L(L_2)$ even if $L_2$ and  $L_1$ are not isotopic. 
Is is thus an interesting question to determine when reversing some longitudes in the peripheral system of a classical
links, corresponds to the peripheral system of a distinct classical link.  
\end{remarque}

\subsection{Reduced quotients and self-virtualization}\label{sec:reduit}

By definition, $G(\Gamma)$ has as many generators as vertices in $\Gamma$. 
Nevertheless, since any two meridians on a same component are conjugate, $G(\Gamma)$ is normally generated by any
choice of one generator per component (as, for example, specified by the choice of a basing). 
In this situation, the following classical notion, due to Milnor \cite{Milnor}, has proved useful: 
\begin{defi}
Let $G$ be a group normally generated by elements $x_1,\cdots,x_p$. We denote by
 $$\RRR G:=\fract{G}{\big\{[x_i,x_i^g]\ \big|\ i\in \{1,\cdots p\}; g\in G\big\}},$$
 the \emph{reduced quotient\index{reduced quotient} of $G$}, which is the largest quotient where each generator commutes with any of its conjugates. 
\end{defi}

This leads to the following definition.
\begin{defi}
  We define a \emph{reduced peripheral system} for a welded graph $\Gamma$ as the data 
  \[
\Big(\RRR G(\Gamma),\big([\mu_i]\big)_i, \big\{[\lambda^i_j].N_i \big\}_{i,j}, \big ([\alpha^i_j].N_i \big)_{i,j}\Big)
\]
where 
\begin{itemize}
\item $\big(G(\Gamma),(\mu_i)_i,\{\lambda^i_j\}_{i,j}, (\alpha^i_j)_{i,j}\big)$ is a peripheral system for $\Gamma$,
\item  $\RRR G(\Gamma)$ is the reduced quotient of $G(\Gamma)$, 
and $[x]$ denotes images in  $\RRR G(\Gamma)$ of $x\in G(\Gamma)$,
\item  $w.N_i$ is the coset of $w$ in $\RRR G(\Gamma)$ relatively to the normal subgroup $N_i$ of $\RRR G(\Gamma)$ generated by
$[\mu_i]$.
\end{itemize}
Two reduced peripheral systems for $\Gamma$ are \emph{equivalent} if they arise from equivalent peripheral systems for $\Gamma$.
\end{defi}

It should be noted that, since two $i$--th meridians are always 
conjugate, the reduced quotient $\RRR G(\Gamma)$ does not depend on
the choice of one meridian per component. Equivalence
classes of reduced peripheral systems are clearly invariant under welded moves. 
\saut

Reduced quotients were introduced by Milnor in \cite{Milnor} for the study of links up to link-homotopies, which are homotopies
that keep distinct components disjoint. In the welded context, the combinatorial counterpart of
link-homotopy is the notion of sv--equivalence,
defined in \cite{ABMW} for welded string links, and that we generalize now to welded graphs:

\begin{defi}
  Two welded graphs differ by a \emph{self-virtualization}\index{self-virtualization} (SV)
  if
  they differ by the replacement of an empty edge $e$ by an edge
  decorated by a letter $v$ or $\overline v$, where $v$ is a vertex
  that belongs to the same component as $e$.
  Two welded graphs are \emph{sv--equivalent} if they are related by a
  sequence of self-virtualizations and welded moves.
\end{defi}

\begin{remarque}\label{rem:sv}
  In terms of Gauss diagrams, an SV move is  the insertion or
deletion of a self-arrow, that is, an arrow which has both endpoints on a same component.  
In terms of virtual diagrams, this translates as replacing a self-crossing by a virtual one, or vice versa. 
\end{remarque}

\begin{prop}\label{prop:RedSelf}
  Reduced peripheral systems are invariant under self-virtualization.
\end{prop}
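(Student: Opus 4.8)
The plan is to prove the slightly stronger statement that if $\Gamma'$ is obtained from $\Gamma$ by a single self-virtualization, then, for a suitable common basing, the reduced peripheral systems of $\Gamma$ and $\Gamma'$ are \emph{literally equal}; the argument splits into identifying the two reduced groups and then matching the longitude data. Let $\Gamma_{i_0}$ be the component carrying the empty edge $e=(a,b,1)$ on which the move is performed, so that the move replaces the decoration of $e$ by $v^{\pm1}$ for some vertex $v$ of $\Gamma_{i_0}$, leaving the underlying graph --- vertices, edges, marked vertices and every other decoration --- unchanged. The key ingredient is a group-theoretic lemma: \emph{in $\RRR G(\Gamma)$, and likewise in $\RRR G(\Gamma')$, every $i_0$-meridian commutes with every element of the normal subgroup $N_{i_0}$ generated by $[\mu_{i_0}]$, hence is fixed under conjugation by $N_{i_0}$.} Indeed, for any two conjugates $\mu_{i_0}^h,\mu_{i_0}^g$ of $\mu_{i_0}$ one has $[\mu_{i_0}^h,\mu_{i_0}^g]=[\mu_{i_0},\mu_{i_0}^{gh^{-1}}]^h=1$ in the reduced quotient; thus every $i_0$-meridian (being, by Remark~\ref{rk:Conjugate}, a conjugate of $\mu_{i_0}$) commutes with every conjugate of $\mu_{i_0}$, and since $N_{i_0}$ is generated by such conjugates, the claim follows. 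As $v$ is a vertex of $\Gamma_{i_0}$, we get $[v^{\pm1}]\in N_{i_0}$, and therefore $a^{v^{\pm1}}=a$ holds in both reduced quotients, for every $i_0$-meridian $a$.

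Using this, I would first identify the two reduced groups. The Wirtinger presentations underlying $\RRR G(\Gamma)$ and $\RRR G(\Gamma')$ have the same generating set $V_\Gamma$ and the same relators except for the one coming from $e$: the relator $\overline b a$ for $\Gamma$, versus $\overline b a^{v^{\pm1}}$ for $\Gamma'$. In $\RRR G(\Gamma')$ the vertices $a,b$ are conjugate to $\mu_{i_0}$ (Remark~\ref{rk:Conjugate} applied to the connected graph $\Gamma'_{i_0}$), so the lemma gives $a^{v^{\pm1}}=a$ there; together with the relator $\overline b a^{v^{\pm1}}=1$ this yields $\overline b a=1$ in $\RRR G(\Gamma')$, so adjoining the relator $\overline b a$ to its defining relators changes nothing. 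Symmetrically, in $\RRR G(\Gamma)$ one has $a=b$ and $a^{v^{\pm1}}=a$, hence $\overline b a^{v^{\pm1}}=1$, so adjoining $\overline b a^{v^{\pm1}}$ changes nothing. After these harmless additions both presentations carry exactly the same relators --- the common Wirtinger relators, the two relators $\overline b a$ and $\overline b a^{v^{\pm1}}$, and the reduced commutator relators $[\mu_i,\mu_i^g]$ --- hence present the same group. Thus $\RRR G(\Gamma)=\RRR G(\Gamma')$ via the identity on $V_\Gamma$, and in particular the meridian tuples $([\mu_i])_i$ agree. This is the step that needs the most care: the ``adjoin the missing relator'' argument must be run \emph{inside} the full reduced quotients, where the Wirtinger relations forcing $a$, $b$ and $v$ to be conjugate are available, and not in some intermediate group where $e$ has been deleted; everything else is routine bookkeeping.

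It remains to compare longitudes. Since the underlying graph is unchanged, any basing $\big((\mu_i)_i,\{l^i_j\}_{i,j},(a^i_j)_{i,j}\big)$ of $\Gamma$ is verbatim a basing of $\Gamma'$. For a loop or path $\gamma$ lying on a component $\Gamma_i$ with $i\neq i_0$, the edge $e$ does not occur in $\gamma$, so $w_\gamma$ is literally unchanged. For $i=i_0$, the word $w_\gamma^{\Gamma'}$ differs from $w_\gamma^{\Gamma}$ only by the insertion of one letter $v^{\pm1}$ at each traversal of $e$; each such insertion changes $[w_\gamma]$ by multiplication on the right by a conjugate of $[v^{\pm1}]$, which lies in $N_{i_0}$ because that subgroup is normal and $[v]\in N_{i_0}$. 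Hence $[w_\gamma^{\Gamma'}]\in[w_\gamma^{\Gamma}]\,N_{i_0}$, so the coset $[w_\gamma].N_{i_0}$ is unchanged (and any $\mu_i$-correction factor one might include is in $N_i$, hence invisible modulo $N_i$). Therefore all the coset data $\{[\lambda^i_j].N_i\}_{i,j}$ and $([\alpha^i_j].N_i)_{i,j}$ coincide for $\Gamma$ and $\Gamma'$, and the two reduced peripheral systems are equal. Since the reduced peripheral system, up to equivalence, is already invariant under welded moves, this proves invariance under self-virtualization, and hence under sv--equivalence.
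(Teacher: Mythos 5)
Your proposal is correct and follows essentially the same route as the paper's proof: the changed Wirtinger relator $\overline b\,a^{v^{\pm1}}$ is equivalent to $\overline b\,a$ in the reduced quotient because $a$, $b$ and $v$ are all conjugates of $\mu_{i_0}$ and such conjugates commute there, and longitudes change only by right multiplication by conjugates of $v^{\pm1}$, hence stay in the same $N_{i_0}$--coset. Your write-up merely makes explicit the commutation lemma and the Tietze-style bookkeeping that the paper's two-line argument leaves implicit.
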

\begin{proof}
  An empty edge $e$ yields a relation $\overline v_2v_1$ in the group, where $v_1$ and $v_2$ are the two endpoints of $e$. 
  Adding a decoration $v$ on $e$ turns the relation into $\overline
  v_2\overline v v_1 v=\overline v_2[v_1,v]v_1$, which is equal to
  $\overline v_2v_1$  in the reduced quotient since $v$ and $v_2$
  belong to the same component and are hence conjugate one to another (see Remark \ref{rk:Conjugate}). The reduced group is hence invariant.
Moreover, longitudes passing through $e$ are modified from $w_1w_2$ to $w_1vw_2=w_1w_2v^{w_2}$, hence define the same coset.
\end{proof}

\begin{prop}[Chen--Milnor presentation]\label{prop:ChenMilnor}
  Let $\big(G(\Gamma),(\mu_i)_i,\{\lambda_j^i\}_{i,j},(\alpha_j^i)_{i,j}\big)$ be a
  peripheral system for a welded graph $\Gamma$. There exists $w_j^i\in
  F(\mu_1,\ldots,\mu_\ell)$ for
  $i\in\{1,\ldots,\ell\}$ and $j\in\{1,\ldots,b_i\}$ such that
  $w_j^i=[\lambda_j^i]$ in $\RRR G(\Gamma)$ and
  \[
\RRR G(\Gamma)\simeq\RRR\Big\langle\mu_1,\ldots,\mu_\ell\ \Big|\ [\mu_i,w_j^i]\textrm{ for all
    $i\in\{1,\ldots,\ell\}$ and $j\in\{1,\ldots,b_i\}$}\Big\rangle.
    \]
\end{prop}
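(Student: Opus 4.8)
The plan is to mimic the classical Chen--Milnor argument, now in the combinatorial setting of welded graphs. First I would fix a $w$--graph representative $\Gamma$ of the welded graph, together with the chosen basing $\big((\mu_i)_i,\{l_j^i\}_{i,j},(a_j^i)_{i,j}\big)$, and write out the Wirtinger presentation of $G(\Gamma)$: one generator per vertex $v$ of $\Gamma$, and one relation $\overline b\,a^w$ per edge $(a,b,w)$. Passing to the reduced quotient $\RRR G(\Gamma)$, the goal is to eliminate every generator except the chosen meridians $\mu_1,\ldots,\mu_\ell$. The key observation, already recorded in Remark \ref{rk:Conjugate}, is that for any vertex $v$ on the component $\Gamma_i$ and any combinatorial path $\gamma$ from $\mu_i$ to $v$ one has $w_\gamma v = \mu_i w_\gamma$ in $G(\Gamma)$, hence $v = \mu_i^{w_\gamma}$ where, importantly, $w_\gamma$ can be rewritten (conjugating letters along the way, exactly as in Remark \ref{rk:Conjugate}) as a word $w_\gamma'$ in the meridians $\mu_1,\ldots,\mu_\ell$ alone — up to the reduced relations. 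So each non-chosen generator $v$ is expressed as a conjugate of a meridian by a word in the meridians; substituting these expressions into the Wirtinger relations should collapse the presentation onto the meridians.

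The second step is to identify which relations survive after this substitution. A Wirtinger relation coming from an edge not lying on a chosen loop $l_j^i$ becomes, after substitution, a consequence of the others — this is the combinatorial analogue of the fact that in the classical Chen--Milnor argument only the longitudinal relations survive. Concretely, choosing a spanning tree of $\Gamma_i$ (with $\mu_i$ as root), the tree edges are used to define the expressions $v = \mu_i^{w_\gamma'}$, and each of the remaining $b_i$ non-tree edges yields, after substitution, precisely a relation of the form $[\mu_i, w]$ where $w$ is a word representing the holonomy around the corresponding loop; one then checks that a suitable choice of non-tree edges corresponds (up to Nielsen transformations of $\pi_1(\Gamma_i,\mu_i)$, which are absorbed by the equivalence of peripheral systems) to the chosen generating loops $l_j^i$, so the holonomy word equals $\lambda_j^i$ as an element of $G(\Gamma)$, hence $[\lambda_j^i]$ in $\RRR G(\Gamma)$. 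Setting $w_j^i\in F(\mu_1,\ldots,\mu_\ell)$ to be this holonomy word (a genuine word in the free group on the meridians, obtained from the rewriting in the previous paragraph) gives $w_j^i = [\lambda_j^i]$ in $\RRR G(\Gamma)$ and the claimed presentation.

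I would then verify that the reduced relations $[x_v,x_v^g]$ defining $\RRR{}$ are correctly accounted for: on the right-hand side $\RRR\langle \mu_1,\ldots,\mu_\ell\mid [\mu_i,w_j^i]\rangle$ the $\RRR{}$ operation only imposes $[\mu_i,\mu_i^g]$ for the meridians, and one must argue that the relations $[v,v^g]$ for the eliminated generators $v=\mu_i^{w}$ are automatic once $[\mu_i,\mu_i^{g}]=1$ for all $g$ — indeed $[\mu_i^w,(\mu_i^w)^g] = \big([\mu_i,\mu_i^{wgw^{-1}}]\big)^w$, which is trivial. This is a short but necessary check that makes the two $\RRR{}$-quotients match.

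The main obstacle, I expect, is the bookkeeping in the second step: making precise that the non-longitudinal Wirtinger relations really do become redundant after the meridian-substitution, and that the surviving holonomy words $w_j^i$ can be taken in $F(\mu_1,\ldots,\mu_\ell)$ and coincide with $[\lambda_j^i]$ in $\RRR G(\Gamma)$ rather than merely being conjugate to them. The cleanest route is probably to first reduce $\Gamma_i$, via welded moves (contraction, split, orientation reversal), to a standard shape — a wedge of $b_i$ circles at $\mu_i$ with each remaining edge decorated by a single meridian letter, much as in the proof of Lemma \ref{lem:PC} — using Proposition \ref{prop:WeldedBasingIso} to track the basing and the induced isomorphism on $\RRR G$; on such a normal form the Tietze elimination of all generators but $\mu_1,\ldots,\mu_\ell$ is immediate and the surviving relations are visibly the $[\mu_i,w_j^i]$, with $w_j^i$ read directly off the loop $l_j^i$. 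The equivalence relation on peripheral systems (which includes Nielsen moves on the $l_j^i$ and conjugations) is exactly what licenses replacing $\Gamma$ by this normal form without loss of generality.
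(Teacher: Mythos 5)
Your argument is essentially correct, but it follows a genuinely different route from the paper's. You perform the Chen--Milnor reduction algebraically, by Tietze transformations on the Wirtinger presentation: a spanning tree of each $\Gamma_i$ is used to eliminate all generators except $\mu_i$, the non-tree edges survive as commutation relations $[\mu_i,w]$, and the reduced relations are what make the recursive substitution $v=\mu_i^{w_\gamma}$ expressible in $F(\mu_1,\ldots,\mu_\ell)$. The paper instead works diagrammatically: it invokes Proposition \ref{prop:RedSelf} to replace $\Gamma$ by an sv--equivalent welded graph (first deleting, by self-virtualization, all $i$--th meridian letters from the decorations on $\Gamma_i$, then sweeping out the remaining letters by generalized stabilizations so that all but one unmarked vertex per component can be contracted), and reads the presentation off the resulting normal form. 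The two proofs encode the same mechanism --- an SV move is precisely the combinatorial avatar of the algebraic fact that in $\RRR G$ one may delete an $i$--th meridian letter from the conjugating word of an $i$--th meridian --- but the paper's version has the added benefit of producing a normal form for the welded graph itself, which is then used in the proof of Theorem \ref{prop:RedClassification}. Your verification that the two $\RRR$--quotients match, via $[\mu_i^w,(\mu_i^w)^g]=\big([\mu_i,\mu_i^{wg\overline w}]\big)^w$, is correct and is a point the paper leaves implicit.

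Two caveats. First, your closing suggestion that the normal form can be reached ``via welded moves (contraction, split, orientation reversal)'' using Proposition \ref{prop:WeldedBasingIso} cannot work as stated: welded moves preserve the group $G(\Gamma)$ on the nose, and $G(\Gamma)$ is in general not generated by $\ell$ elements, so no sequence of welded moves can produce a wedge of circles based at $\mu_i$ with decorations in $F(\mu_1,\ldots,\mu_\ell)$. The normal form is only sv--equivalent to $\Gamma$, which is exactly why the statement concerns $\RRR G(\Gamma)$ and why the paper routes the argument through Proposition \ref{prop:RedSelf} rather than Proposition \ref{prop:WeldedBasingIso}. Second, in your main argument the rewriting of $w_\gamma$ as a word in the meridians is circular on its face (substituting $v'=\mu_{i'}^{w'}$ introduces new non-meridian letters coming from $w'$), so you need an explicit well-foundedness argument --- e.g.\ Milnor's lemma that the reduced quotient of a group normally generated by $\mu_1,\ldots,\mu_\ell$ is nilpotent and generated by the $\mu_i$ themselves, or an induction on the number of components already treated, mirroring the component-by-component order of the paper's proof. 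Neither point invalidates your approach, but both need to be said for the proof to close.
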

\begin{proof}
  The core of the proof is to show that $\Gamma$ is sv--equivalent to a welded graph with a unique unmarked vertex (and possibly several marked vertices) per component. This is done component per component. 
 Consider some component $\Gamma_i$, and pick an unmarked vertex: our goal is to eliminate all other unmarked vertices of $\Gamma_i$ by contraction moves. 
  First we remove from the edge labels, all letters corresponding to $i$--th meridians, using Split and self-virtualization moves. 
  Having done so, we can successively apply generalized stabilization moves to those edges that we wish to contract, to turn them into empty edges: 
  since no $i$--th meridian appear in the label that we are sweeping out, those generalized stabilization moves will only modify decorations on
  other components; note that only the labels of other components are affected in the process, so that previously-treated components remain of the desired form. \\
   It remains to observe that, for the resulting welded graph with a unique unmarked vertex per component, the associated Wirtinger presentation provides the desired presentation for $\RRR G(\Gamma)$.
\end{proof}

We stress that the proof of Proposition \ref{prop:ChenMilnor} provides
not only a presentation for $\RRR G(\Gamma)$, but also a ``normal
form''  for a welded graph $\Gamma$ endowed with a basing (hence
with a reduced peripheral system).

Specifically, if $\big((\mu_i)_i,\{l^i_1,\cdots ,l^i_{b_i}\}_{i,j},(a^i_1,\cdots,a^i_{m_i})_{i}\big)$ is a basing for $\Gamma$, 
then the $i$--th component of this normal form has one unmarked vertex $\mu_i$, $b_i$ loops based at $\mu_i$ and decorated by
$l_1^i,\ldots,l_{b_i}^i$, and $m_i$ edges connecting $\mu_i$ to each
marked vertex, and decorated by $a_1^i,\ldots,a_{m_i}^i$:
\[
\dessin{2.25cm}{DESSIN}.
  \]
  
This yields the following classification of welded graphs up to self-equivalence by the reduced peripheral system:
\begin{theo}\label{prop:RedClassification}
  Two welded graphs are sv--equivalent if and only if they have
  equivalent reduced peripheral systems.
\end{theo}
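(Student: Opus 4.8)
The plan is to treat the two implications separately; the forward one is immediate, while the converse runs entirely through the Chen--Milnor normal form of Proposition~\ref{prop:ChenMilnor}. For ``sv--equivalent $\Rightarrow$ equivalent reduced peripheral systems'', I would just note that a self-virtualization preserves the reduced peripheral system up to equivalence by Proposition~\ref{prop:RedSelf}, and that a welded move does so as well --- this is the remark made just before the statement, which itself follows from Proposition~\ref{prop:WeldedBasingIso} by passing to reduced quotients. An sv--equivalence being a finite composite of such moves, the two reduced peripheral systems are equivalent.

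For the converse, suppose $\Gamma$ and $\Gamma'$ have equivalent reduced peripheral systems. Since that datum records $\ell$ along with the numbers $b_i$ and $m_i$, the welded graphs have the same type. I would pick basings for $\Gamma$ and $\Gamma'$ matched under the given equivalence --- absorbing into these choices the parts of the equivalence that are themselves changes of basing, namely re-indexing and re-orienting the loops and pre-composing the arc-longitudes by loop-longitudes --- and then apply the sv--reduction from the proof of Proposition~\ref{prop:ChenMilnor}. This is an sv--equivalence carrying each welded graph to one in the displayed normal form: one unmarked vertex $\mu_i$ on the $i$--th component, bearing $b_i$ decorated loops and $m_i$ decorated edges to the marked vertices, all decorations being words in the meridians. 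Write $N,N'$ for these normal forms; by the forward implication they still have equivalent reduced peripheral systems, now realised by the visible loops and arcs, so it suffices to prove $N$ and $N'$ sv--equivalent. The remaining freedom in the equivalence is an isomorphism $\varphi\colon\RRR G(N)\to\RRR G(N')$ together with conjugating elements $w_i$; a combination of generalized stabilizations at $\mu_i$ and R1 moves on the incident edges realises on $N$ the conjugation of the $i$--th component datum by $w_i$, after which one may assume $\RRR G(N)=\RRR G(N')$ with the same meridians, the same loop-longitudes, and arc-longitudes agreeing modulo the normal subgroups $N_i$. Via the Chen--Milnor presentation, this forces each loop-decoration of $N$ and its counterpart in $N'$ --- and each arc-decoration, up to a power of $\mu_i$ --- to represent the same element of $\RRR G(N)$; equivalently, their quotient is, in the free group on the meridians, a product of conjugates of relators of the form $[\mu_k,\lambda^k_m]$ (the Chen--Milnor relators of one of the normal forms) and of the self-commutators $[\mu_k,\mu_k^g]$ that define the reduced quotient.

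It then remains to realise such a change of decoration by sv-- and welded moves on the normal form. Inserting into any decoration a cyclic permutation of $[\mu_k,\lambda^k_m]^{\pm 1}$ is precisely the rephrased R3 move of Section~\ref{sec:WeldedGraphs}, carried out with the normal-form loop carrying $\lambda^k_m$; inserting a self-commutator $[\mu_i,\mu_i^g]$ into a decoration on component $i$ is done by sv moves, which permit free insertion and deletion of the letter $\mu_i$ along that component; and a self-commutator $[\mu_k,\mu_k^g]$ with $k$ on a different component is introduced on the edge at hand by an sv move on component $k$ followed by a rerouting, then cancelled back on component $k$, exactly as in the sv--classification of welded string links in~\cite{ABMW}. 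Free reduction of words is automatic, decorations being elements of a free group. Applying these modifications edge by edge turns $N$ into $N'$, which finishes the proof.

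The step I expect to be the main obstacle is this last one: verifying that \emph{every} defining relation of the reduced group can be effected by an explicit finite sequence of sv-- and welded moves on the normal form, and in particular that a cross-component self-commutator $[\mu_k,\mu_k^g]$ appearing in a decoration on a component other than the $k$--th can be introduced and cancelled without disturbing the data already normalised on the other components. A secondary subtlety is to make sure the abstract isomorphism $\varphi$ coming from the equivalence is genuinely realised through such moves rather than silently assumed to be geometric. Everything else should reduce to bookkeeping with the moves of Section~\ref{sec:WeldedGraphs} and the definition of equivalence of reduced peripheral systems.
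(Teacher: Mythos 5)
Your proposal follows essentially the same route as the paper's proof: both directions are handled as you describe, with the converse passing through the Chen--Milnor normal form of Proposition~\ref{prop:ChenMilnor}, realising the equivalence operations on reduced peripheral systems by (generalized) stabilizations and basing changes, and then reducing to local insertions of conjugates of $\mu_i$, Chen--Milnor relators and self-commutators into the longitude words. The last step you rightly flag as the main obstacle is exactly the point where the paper defers to the local arguments of the link-case proof in \cite{AM}, so your outline matches the paper's argument in substance.
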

\begin{proof}
  The proof essentially follows the proof of \cite[Thm. 2.1]{AM}, which addresses the link case. Consider two welded graphs $\Gamma_1$ and $\Gamma_2$ with equivalent
  reduced peripheral systems, and suppose that $\Gamma_1$
  and $\Gamma_2$ are given in normal forms as above.

  The first step is to modify $\Gamma_2$ into a welded graph having the ``same''  
  reduced peripheral system as $\Gamma_1$;  by this we mean that the isomorphism $\varphi$ of Definition \ref{def:PeripheralSystems} 
  sends directly meridians to meridians and longitudes to longitudes. 
  As stated in this same definition, equivalence of reduced peripheral systems is generated by three elementary operations. 
  The first one corresponds to conjugation, by a same word $w$, of a meridian and the associated loop-longitudes, and precomposition by $\overline w$ of the associated arc-longitudes; at the level of $w$--graphs, this can be 
  realized by generalized stabilizations. 
  The second operation reverses a loop-longitude; this can be realized by an orientation reversal move on $w$--graphs. 
  The last operation precomposes a longitude by a loop-longitude from the same component; this can be realized on $w$--graphs by the following sequence of moves:
  \[
      \dessin{1.75cm}{ChangingBasis_1} \ \xrightarrow[]{\ \textrm{E}}\
      \ 
      \dessin{1.75cm}{ChangingBasis_2} \ \xrightarrow[]{\ \textrm{P}\
                                         } \
                                             \dessin{1.75cm}{ChangingBasis_3}\
                                             \rotatebox{30}{$\xrightarrow[]{\
                                               \textrm{C}\ }$}
                                             \begin{array}{c}
                                               \dessin{1.75cm}{ChangingBasis_4}\\
                                               \rotatebox{270}{$\simeq$}\\
                                       \dessin{1.75cm}{ChangingBasis_5}
                                             \end{array}.
    \]
Now, $\Gamma_1$ and $\Gamma_2$ only differ by the representative words
for the longitudes. Thanks to Proposition \ref{prop:ChenMilnor}, we know
that, for a given $i$--th longitude, the representative words differ by a sequence of
insertion/deletion of \begin{itemize}
\item conjugates of $\mu_i$,
\item  commutators $[\mu_j,w_j]$ for some $j\in\{1,\ldots,\ell\}$, where $w_j$ is a $j$--th loop-longitude,
\item commutators $[\mu_j,\mu_j^w]$ for some $j\in\{1,\ldots,\ell\}$, where $w$ is any word.
\end{itemize}
All these operations are handled in a local way in the proof of
\cite[Thm. 2.1]{AM}, and the argument can be transposed directly to the welded graph case. 
\end{proof}

\begin{remarque}\label{rem:jbalancetout}
  Theorem \ref{prop:RedClassification} can be seen as a
  generalization of the main theorem of \cite{AM}, which deals with the link case. 
  It provides an equivalence between reduced peripheral systems and welded graphs up
  to sv--equivalence. 
  Other quotients of the peripheral system, such
  as $q$--nilpotent \cite[Thm. 2]{Colombari} or $k$--reduced \cite[Thm. 4.3]{kAMY} quotients  have
  also been investigated and have led to similar correspondances for welded links
  up to,  respectively,  $w_q$ and self $w_k$--concordance. 
  The notion of (self) $w_q$--concordance refers, on the one hand, to a combinatorial notion of concordance for welded objects which generalizes the classical notion of concordance for topological objects and, on the other hand,
the notion of (self) $w_q$--equivalence based on the \emph{arrow
  calculus} developed by the second and third authors \cite{arrow}. This arrow calculus 
relies on surgery operations on welded diagrams, along certain oriented unitrivalent trees, 
filtered by the size of the associated tree.
\\
It turns out that arrow calculus can be translated in terms of welded graph. A degree $q$ surgery corresponds to inserting a
length $q$ iterated commutator in some edge decoration, and a degree $k$ self-surgery likewise corresponds to inserting an iterated commutator which involves at least $k+1$ times the same component. The diagrammatic formulation of concordance can also be extended to all welded graphs in a straightforward way. 
All the arguments extend to this setting, so that two welded graphs are
$w_q$--concordant (resp. self $w_k$--concordant) if and only if they have
equivalent $q$--nilpotent (resp. $k$--reduced) peripheral systems.
\\
  Note that, all together, this implies that, up to sv--equivalence, $w_q$ or self
  $w_k$--concordance, the sets of welded links and string links embed in
  welded graphs. As a matter of fact, and compared with
  Theorem \ref{th:Embed}, it can indeed be directly
  checked by hand that $\Upsilon$ moves are trivial up to
  sv--equivalence, $w_q$-equivalence or self $w_k$--equivalence. 
\end{remarque}

\subsection{Milnor invariants}

The peripheral system and its reduced quotient are strong invariants, but they are also very difficult to compute and compare. 
In \cite{Milnor2}, Milnor introduced numerical invariants extracted from the nilpotent peripheral system, which are
algorithmically computable, and which are still powerful link invariants.  We define below similar invariants for welded graphs. 
A comprehensive account of Milnor invariants can be found in \cite[Sec. 4]{cutAMY}, see also the survey \cite{surveyMilnor}.

For simplicity of exposition, here, we will restrict ourselves to the so-called ``non repeated''  Milnor invariants, that are extracted from the reduced peripheral system. 
Moreover, with our main topological result in sight (see Section \ref{sec:Topology}), we shall further restrict to the following class of welded graphs. 
\begin{defi}
A  \emph{welded forest}, is a welded graph of type $\big((m_1,0),\ldots,(m_\ell,0)\big)$ such that  $m_1,\ldots,m_\ell\neq0$. 
In other words, each connected component is a rooted tree:
  simply connected and with a minimal marked vertex.
\end{defi}

Let $\Gamma$ be a welded forest. 
Following Remark \ref{ohlabelleremarque}, $\Gamma$ has a canonical set of meridians $\mu_1,\ldots,\mu_\ell$, 
which corresponds to the minimal marked vertex on each component. 
Moreover, $\Gamma$ has no non-trivial loop-longitude, so that by
Proposition \ref{prop:ChenMilnor}, we have that $\RRR G(\Gamma)$ is isomorphic to the reduced free group $\RRR  F(\mu_1,\ldots,\mu_\ell)$.
Hence, following Remark \ref{rk:MWF}, the reduced peripheral system of $\Gamma$ is simply the data 
of $\big(\sum_{i=1}^\ell m_i\big)-\ell$ elements of $\RRR F(\mu_1,\ldots,\mu_\ell)$, representing all arc-longitudes. 
In what follows, we shall simply denote by $\alpha^i_j\in F(\mu_1,\ldots,\mu_\ell)$, the element representing the arc-longitude running to the $j$--th marked point on the $i$--th component of $\Gamma$ ($i\in\{1,\cdots,\ell\}$, $j\in\{2,\cdots,n_i\}$).

Next we recall the \emph{reduced Magnus expansion}
\[
\begin{array}{ccc}
  \RRR
F(\mu_1,\ldots,\mu_\ell)&\to&\fract{\Z\langle\!\langle
                                           X_1,\ldots,X_\ell\rangle\!\rangle}{\RR}\\
  \mu^\e_i&\mapsto&1+\e X_i
\end{array},
\]
taking values in the quotient of the ring $\Z\langle\!\langle
X_1,\ldots,X_\ell\rangle\!\rangle$ of formal power  series in $\ell$
non-commuting variables, by the ideal $\RR$  generated by monomials containing at least twice the same variable.
The reduced Magnus expansion is known to be injective, see for instance \cite{ipipipyura}.

\begin{defi}
 For every $i\in\{1,\ldots,\ell\}$, $j\in\{2,\ldots,m_i\}$, and for every sequence $I=i_1\ldots i_r$ of distinct integer in
 $\{1,\ldots,\ell\}\setminus\{i\}$, we define $\mu_\Gamma(I;i,j)$ as
 the coefficient of $X_{i_1}\cdots X_{i_r}$ in $E(\alpha_j^i)$. 
 These coefficients are called  the \emph{non repeated Milnor
   invariants}\index{Milnor invariants} of $\Gamma$.
\end{defi}

 \begin{remarque}
 The well-definedness of these non repeated Milnor invariants $\mu_\Gamma(I;i,j)$ is clear. 
 Indeed, as discussed above, the welded forest $\Gamma$ has a preferred set of meridians (Remark \ref{rk:MWF}), 
 and  the $\big(\sum_{i=1}^\ell m_i\big)-\ell$ arc-longitudes forming its reduced peripheral system, 
 induce well-defined elements $\alpha^i_j$ in  $\RRR G(\Gamma)= \RRR
 F(\mu_1,\ldots,\mu_\ell)$ (see Notation \ref{cvraimentjusteunenota}).
 Their invariance under self-virtualization follows from Proposition \ref{prop:RedSelf}. 
 \end{remarque}

We have the following classification result.
\begin{prop}\label{prop:wTreeClassification}
  Two welded forests are sv--equivalent if and only they have
  same non repeated Milnor invariants.
\end{prop}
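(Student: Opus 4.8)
The plan is to derive Proposition~\ref{prop:wTreeClassification} directly from the more general classification Theorem~\ref{prop:RedClassification}, by showing that, for welded forests, the reduced peripheral system is completely encoded by the non-repeated Milnor invariants. Recall that a welded forest $\Gamma$ has simply connected components, each with a canonical minimal marked meridian $\mu_i$, and that by Proposition~\ref{prop:ChenMilnor} (plus Remark~\ref{rk:MWF}) we have $\RRR G(\Gamma)\simeq\RRR F(\mu_1,\ldots,\mu_\ell)$ \emph{canonically}, so the reduced peripheral system of $\Gamma$ reduces to the tuple of arc-longitudes $(\alpha_j^i)\in\RRR F(\mu_1,\ldots,\mu_\ell)$, considered up to the equivalence of Definition~\ref{def:PeripheralSystems}. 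The key observation is that, since the group is already the same reduced free group on the canonical meridians, the isomorphism $\varphi$ in that definition is forced to be an automorphism of $\RRR F(\mu_1,\ldots,\mu_\ell)$ sending each $\mu_i$ to a conjugate $\mu_i^{w_i}$ of itself; together with the remaining operations (inverting loop-longitudes, which is vacuous here, and precomposing by loop-longitudes, also vacuous), the equivalence on the arc-longitude tuple is exactly: conjugate each $\mu_i$ (hence adjust $\alpha_j^i$ by the coset data $w_i.N_i$) and precompose $\alpha_j^i$ by $\overline{w_i}$. So the invariant content is precisely the collection of \emph{cosets} $\alpha_j^i.N_i$ where $N_i$ is normally generated by $\mu_i$, up to the simultaneous conjugation action on the $\mu_i$.

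The next step is to translate this into the Magnus language. One direction is immediate: by Proposition~\ref{prop:RedSelf}, the reduced peripheral system, hence each $\alpha_j^i$ modulo $N_i$, is an sv-invariant, and since the reduced Magnus expansion $E$ is injective on $\RRR F(\mu_1,\ldots,\mu_\ell)$ (cited from \cite{ipipipyura}), the coefficients $\mu_\Gamma(I;i,j)$ of the multilinear monomials $X_{i_1}\cdots X_{i_r}$ in $E(\alpha_j^i)$ with $I$ not containing $i$ are sv-invariant; that is the ``only if'' direction. For the ``if'' direction I would argue as follows. First, note that passing to the quotient by $N_i$ corresponds, on the Magnus side, to killing the variable $X_i$: indeed $N_i$ is generated by conjugates of $\mu_i$, and $E(\mu_i^w)-1\in (X_i)$ for any $w$. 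Hence the coset $\alpha_j^i.N_i$ is detected exactly by the monomials of $E(\alpha_j^i)$ \emph{not} involving $X_i$, which are precisely the $\mu_\Gamma(I;i,j)$ with $i\notin I$. So equality of all non-repeated Milnor invariants says exactly that, for each $(i,j)$, the cosets $\alpha_j^i.N_i$ agree (for the two forests) after identifying both $\RRR G$'s with the same reduced free group via the canonical meridians. One then still needs to absorb the residual conjugation ambiguity on the $\mu_i$; but since conjugating $\mu_i$ by $w_i$ only changes $\alpha_j^i$ by the $N_i$-coset data on the $i$-th component and modifies $\alpha_j^{i'}$ on other components by a leading $\overline{w_{i'}}$ factor which, after choosing appropriate $w_i$, can be normalized, one sees the cosets $\alpha_j^i.N_i$ modulo this global conjugation action are captured by the $\mu_\Gamma(I;i,j)$. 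Concretely, I would exhibit, from the Milnor coefficients, representative words $\alpha_j^i\in F(\mu_1,\ldots,\mu_\ell)$ putting $\Gamma$ in the normal form of Proposition~\ref{prop:ChenMilnor}, and check that two forests with equal Milnor invariants have the ``same'' reduced peripheral system in the sense required to invoke Theorem~\ref{prop:RedClassification}.

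The main obstacle, I expect, is the bookkeeping around the equivalence relation on peripheral systems restricted to the forest case — in particular verifying that, once loop-longitudes are absent and meridians are canonically fixed, the only surviving freedom is the simultaneous conjugation of the $\mu_i$ together with the induced coset reduction, and that this freedom does not destroy any of the coefficients $\mu_\Gamma(I;i,j)$ with $i\notin I$. The reason I do not expect this to be hard in substance is that it is exactly the reduced analogue of Milnor's original well-definedness argument for the non-repeated $\bar\mu$-invariants of links \cite{Milnor2}, and the relevant combinatorial facts (that conjugating $\mu_i$ only affects $X_i$-monomials, that the reduced Magnus expansion is injective, and that the normal form of Proposition~\ref{prop:ChenMilnor} realizes any prescribed arc-longitude data) are all available. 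So the proof is essentially: (1) reduce to Theorem~\ref{prop:RedClassification}; (2) identify the reduced peripheral system of a welded forest with the Magnus coefficients $\mu_\Gamma(I;i,j)$ via injectivity of $E$ and the $X_i\mapsto 0$ interpretation of the $N_i$-cosets; (3) handle the conjugation ambiguity. The only real care is in step (3).
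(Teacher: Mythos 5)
Your proposal is correct and takes essentially the same route as the paper: the paper's own proof is just the one-line invocation of Theorem~\ref{prop:RedClassification} together with the injectivity of the reduced Magnus expansion, relying on the preceding discussion (Remark~\ref{rk:MWF}, Proposition~\ref{prop:ChenMilnor} and the well-definedness remark) for the identification of the reduced peripheral system of a welded forest with the coefficients $\mu_\Gamma(I;i,j)$. Your steps (2) and (3) simply spell out the details --- the $X_i\mapsto 0$ reading of the $N_i$--cosets and the conjugation indeterminacy --- that the paper leaves implicit.
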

\begin{proof}
This is a direct consequence of Theorem \ref{prop:RedClassification},
combined with the injectivity of the reduced Magnus expansion  \cite{ipipipyura}.
\end{proof}

\section{Application to knotted punctured spheres up to link-homotopy}
\label{sec:Topology}

We can now get back to 4--dimensional topology, and deal with the case of knotted punctured spheres. 
\begin{defi}
Let $\ell$ be some positive integer and $(m_1,\ldots,m_\ell)\in\N$. A
union of $\ell$ \emph{knotted punctured spheres}\index{knotted
  punctures spheres}
is the image of a proper embedding in $B^4$ of
$\Sigma=\sqcup_{i=1}^\ell S_{\! i}$, where $S_{\! i}$ is a $2$--sphere with $m_i$ disjoint disks removed and such that
$\partial\Sigma$ is sent to a fixed (oriented and ordered) unlink $U$
in $\partial B^4=S^3$, up to isotopy fixing the boundary ; we say
it has type $(m_1,\ldots,m_\ell)$.
\end{defi}
As noted in the introduction, this large class of surface-links contains in particular Le Dimet's linked disks \cite{LeDimet} (case $m_i=1$ for all $i$), and $2$--string links \cite{ABMW,AMW} (case $m_i=2$ for all $i$).
We stress that the boundary components of each punctured sphere are ordered, so that there is a preferred meridian for each component of a union $S$ of knotted punctured spheres, defined as a loop in $\partial B^4\setminus S$ which is a meridian of the first boundary component. 

Our purpose is to classify unions of knotted punctured spheres up to link-homotopy. 
\begin{defi}
Two unions of knotted punctured spheres are \emph{link-homotopic} if they are
images of embeddings which are homotopic through proper immersions
that keep distinct components disjoint, and that send $\partial\Sigma$ to $U$. 
\end{defi} 

Our first result in this direction, is a generalization of \cite[Thm. 3.5]{AMW}.
\begin{prop}\label{prop:Ribbon}
  Any union of knotted punctured spheres is link-homotopic to a ribbon one.
\end{prop}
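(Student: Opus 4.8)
The plan is to follow the strategy of the cited result \cite[Thm. 3.5]{AMW} for $2$--string links, adapting it to the more general setting of unions of knotted punctured spheres. The starting point is the observation that a union $S$ of knotted punctured spheres, like any surface-link, admits a \emph{motion picture} description: slicing $B^4$ by the level sets of a Morse function on the sphere factors, $S$ is presented as a one-parameter family of classical tangles in $S^3$, starting with the unlink $U$, punctuated by births, deaths, and saddle moves. The homotopy we build will act chamber-by-chamber on this motion picture. The key idea is that we only care about $S$ up to link-homotopy, so we are free to perform any classical link-homotopy move (i.e., crossing changes between strands of \emph{the same} component) on each slice, as long as these are performed coherently across slices so as to assemble into a proper homotopy through immersions of $\Sigma$ keeping distinct components disjoint.

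The main steps, in order, are as follows. First I would fix a motion picture for $S$ and, using the fact that link-homotopy of classical links allows one to undo all self-crossings, homotope each slice so that the ``self-interaction'' of each component is trivialized; the subtle point is that a self-crossing change in a slice is realized in $B^4$ by a homotopy of $\Sigma$ that introduces a transverse double point between two sheets of the \emph{same} component, which is precisely what a link-homotopy permits. Second, having arranged that each component, viewed on its own, traces out an \emph{unknotted} (ribbon) configuration in each slice, I would argue that the resulting surface-link has a simple projection to $\R^3$: all remaining crossing information is between \emph{distinct} components, and after a further isotopy the only singularities of the projection are double points. This is the analogue of the fact, mentioned in the introduction, that for spheres being ribbon is equivalent to having a simple projection \cite{Yaji}; here I expect one needs a relative version, adapted to the punctured-sphere case with boundary a fixed unlink $U$, which is why the hypothesis that $\partial\Sigma$ maps to a \emph{fixed} unlink is used. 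Third, from the simple projection one constructs an immersed $3$--dimensional handlebody filling $\Sigma$ with only ribbon singularities: each disk of the simple projection gets pushed into $B^4$ to produce a ribbon disk, and the fact that the boundary is an \emph{unlink} guarantees the pieces near $\partial B^4$ are embedded disks, matching the definition of a ribbon handlebody. This exhibits the homotoped surface-link as ribbon.

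The step I expect to be the main obstacle is the second one: upgrading ``each component is separately unknotted in every slice'' to a genuine simple projection of the whole union, while respecting the boundary condition. One has to rule out triple points and branch points in the projection after the link-homotopy, and for this it is not enough that self-crossings have been removed — one must also choose the heights of the distinct components' sheets generically and check that the boundary unlink $U$ does not obstruct the required isotopy (for instance, that no component is forced to ``loop around'' another in a way that creates an essential projection singularity). I would handle this by an explicit inductive argument on the components, peeling them off one at a time from the innermost to the outermost in the projection, exactly as in the $2$--string link argument of \cite{AMW}, and checking at each stage that the slice-wise triviality of self-interactions lets one isotope the current component into a standard ribbon position rel $U$ without disturbing the others up to link-homotopy. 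The remaining steps are then essentially bookkeeping, translating a simple projection into a ribbon filling as in \cite{Maru,Yaji}.
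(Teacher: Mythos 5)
Your proposal diverges from the paper's argument, and the divergence is precisely where the gap lies. The paper's proof uses the unlink boundary condition to shrink a neighborhood of $\partial S$ and regard $S$ as a $2$--link of \emph{closed} spheres with thin, unknotted, unlinked tubes attached; it then invokes the Bartels--Teichner theorem \cite[Thm.~1]{BT} that every such $2$--link is link-homotopically trivial, and finally checks, by a local analysis of Roseman moves and singular Roseman moves on broken surface diagrams, that this link-homotopy only introduces ribbon-type linking among the attached tubes. Your proposal never brings in any input of comparable strength, and the step you yourself flag as ``the main obstacle'' is in fact the entire content of the proposition.

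Concretely, the gap is in your second step. Link-homotopy only permits crossing changes between sheets of the \emph{same} component, so after you trivialize self-interactions, all the inter-component linking of $S$ remains untouched. There is no reason why this residual configuration should admit a simple projection: generic projections of surface-links have triple points and branch points arising from the interaction of \emph{distinct} components, and these cannot be removed by a further isotopy, nor by any additional self-crossing changes. Arranging that each component is individually unknotted (and note that ``slice-wise unknotted'' does not even imply unknotted as a surface, since knotting can occur across slices) says nothing about how the components link one another, which is exactly what must be tamed to reach a ribbon position. Your proposed inductive ``peeling off'' of components one at a time has no mechanism for undoing essential inter-component linking; the statement that this can always be done, up to link-homotopy, is essentially equivalent to the triviality of $2$--links up to link-homotopy, i.e.\ to the Bartels--Teichner theorem you would need to cite. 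A secondary issue is that the equivalence ``ribbon $\Leftrightarrow$ simple projection'' from \cite{Yaji} is stated for embedded spheres, and the relative version for punctured spheres with fixed unlink boundary would require its own justification; but this is minor compared to the missing core input.
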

\begin{proof}
We can use the very same strategy as used in \cite[Sec. 3.2]{AMW} in the $2$--string link case. 
Pick some union $S$ of knotted punctured spheres. 
The strategy goes in three steps, which can be roughly summarized as follows. 
First, using the fact that $\partial S$ is an unlink, one can shrink and stretch a neighborhood of this boundary; 
this allows us  to regard $S$ as a $2$--link (\emph{i.e.} smoothly embedded disjoint copies of the $2$--sphere) with thin, unknotted and unlinked tubes attached. 
Next, by a theorem of Bartels and Teichner \cite[Thm.~1]{BT}, there is a link-homotopy which takes this $2$--link to a disjoint union of
unknotted spheres.
The proof then amounts to showing that, in the process of this link-homotopy, only ribbon-type linking  will be created among the attached tubes, so that the final 
union of knotted punctured spheres is indeed ribbon. 
This last step uses broken surface diagrams of immersed surfaces, and consists in a systematic study of all Roseman moves and  singular Roseman moves on these diagrams, which are local moves that generate link-homotopy of surface-links \cite[Prop.~2.4]{AMW}; the arguments in \cite{AMW} being local, they apply verbatim to our more general situation. 
\end{proof}

In \cite[Sec. 5.2.3]{cutAMY}, Milnor link-homotopy invariants for surface-links were defined; 
they come in three flavors: Milnor maps, Milnor loop-invariants and Milnor arc-invariants. 
Milnor maps and Milnor loop-invariants are extracted from the loop-longitudes; 
in the case of a union $S$ of $\ell$ knotted punctured spheres, 
the latter are given by curves parallel to the boundary components, which are copies of the unknot, 
so that Milnor maps and Milnor loop-invariants of $S$ are trivial. 
Milnor arc-invariants are defined by pushing in $B^4\setminus S$, and closing in a canonical way,
the $\big(\sum_{i=1}^\ell m_i\big)-\ell$ boundary-to-boundary arcs given by the basing. 
These define a collection of elements of the reduced fundamental group of $B^4\setminus S$, which 
by \cite[Thm.~5.15]{AMW} is 
isomorphic to the reduced free group generated by $\ell$ elements $m_1,\cdots,m_\ell$, 
represented by the preferred meridians of each component of $S$.
Taking the coefficients in the reduced Magnus expansion of these elements, defines non-repeated Milnor invariants of $S$.  
We refer to \cite{cutAMY} for details.

We can now prove the classification result announced in \cite[Thm.~6.10]{cutAMY}.
\begin{theo}\label{thm:classif}
  Unions of knotted punctured spheres are classified, up to link-homotopy, by non repeated Milnor invariants.
\end{theo}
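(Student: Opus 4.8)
The plan is to reduce the topological classification of Theorem~\ref{thm:classif} to the combinatorial classification of Proposition~\ref{prop:wTreeClassification} via the generalized Tube map, using the results assembled throughout the paper. First I would observe that, by Proposition~\ref{prop:Ribbon}, any union of knotted punctured spheres is link-homotopic to a ribbon one, and so it suffices to classify \emph{ribbon} unions of knotted punctured spheres up to link-homotopy. Since the boundary is a fixed unlink, a ribbon such surface-link is exactly the image of a type $(m_1,\ldots,m_\ell)$ welded forest under the map $\Tube$ of Proposition~\ref{prop:Tube} (the simply-connected components with a marked vertex on each matching the ``punctured sphere bounding the unlink'' condition). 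Thus every link-homotopy class is realized by $\Tube(\Gamma)$ for some welded forest $\Gamma$, which takes care of the fact that non-repeated Milnor invariants \emph{distinguish}: they are defined identically on both sides, and the sv--invariance from Proposition~\ref{prop:RedSelf} together with Proposition~\ref{prop:wTreeClassification} shows any two welded forests with the same invariants are sv--equivalent, hence their Tubes are link-homotopic (using that $\Tube$ sends sv--equivalence to link-homotopy, the topological avatar of self-virtualization).

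The key step is then to establish the converse: if two ribbon unions of knotted punctured spheres $\Tube(\Gamma_1)$ and $\Tube(\Gamma_2)$ are link-homotopic, then $\Gamma_1$ and $\Gamma_2$ are sv--equivalent (equivalently, by Proposition~\ref{prop:wTreeClassification}, have the same non-repeated Milnor invariants). The natural route is to show that a link-homotopy of ribbon surface-links induces an equivalence of reduced peripheral systems, and then invoke Theorem~\ref{prop:RedClassification}. For this, I would argue as in Remark~\ref{rk:TopologicalCase}: the peripheral system of $\Tube(\Gamma)$ coincides with that of $\Gamma$, and a link-homotopy preserves the reduced peripheral system of a surface-link because, as recalled before the theorem statement, the reduced fundamental group of the exterior and the (Magnus coefficients of the) pushed-off boundary arcs are link-homotopy invariants by \cite{AMW,cutAMY}. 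Concretely: a link-homotopy only changes crossings within a single component, which on the combinatorial side is a self-virtualization, and both sides of such a move have equivalent reduced peripheral systems by Proposition~\ref{prop:RedSelf}.

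The cleanest organization, I expect, is: (i) reduce to the ribbon case via Proposition~\ref{prop:Ribbon}; (ii) identify ribbon unions of knotted punctured spheres with $\Tube$-images of welded forests; (iii) check that $\Tube$ intertwines sv--equivalence with link-homotopy, and that the non-repeated Milnor invariants of $\Tube(\Gamma)$ agree with $\mu_\Gamma(I;i,j)$ — this is essentially bookkeeping given Remark~\ref{rk:TopologicalCase} and the definitions; (iv) conclude by Proposition~\ref{prop:wTreeClassification}. The main obstacle is step (iii) in the ``hard'' direction: showing that a geometric link-homotopy between two $\Tube$-images forces sv--equivalence of the underlying welded forests, rather than merely equality of Milnor invariants for some \emph{a priori} different combinatorial models. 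I would handle this by passing through the peripheral-system invariant: link-homotopic ribbon surface-links have the same reduced peripheral system (topological invariance, via the identification of $\pi_1$ of the exterior and its reduced quotient with the reduced free group and the link-homotopy invariance of Milnor arc-invariants from \cite{cutAMY,AMW}), hence the associated welded forests have equivalent reduced peripheral systems, hence are sv--equivalent by Theorem~\ref{prop:RedClassification}. Once this is in place, the equivalence of ``sv--equivalent $\iff$ equal non-repeated Milnor invariants'' from Proposition~\ref{prop:wTreeClassification} closes the loop, and translating back along $\Tube$ gives exactly the statement of Theorem~\ref{thm:classif}.
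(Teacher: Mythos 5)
Your proposal is correct and follows essentially the same route as the paper: reduce to the ribbon case via Proposition~\ref{prop:Ribbon}, realize the surfaces as $\Tube$--images of welded forests via Proposition~\ref{prop:Tube}, match the invariants using Remark~\ref{rk:TopologicalCase}, conclude sv--equivalence from Proposition~\ref{prop:wTreeClassification}, and translate SV moves into link-homotopies under $\Tube$. The only divergence is in the invariance direction, where the paper simply cites the link-homotopy invariance of non-repeated Milnor invariants from \cite{cutAMY}, while you take a detour through equivalence of reduced peripheral systems and Theorem~\ref{prop:RedClassification}; this rests on the same external input (and note that your aside identifying a geometric link-homotopy with a combinatorial self-virtualization is not justified as stated, but your argument does not actually need it).
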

\begin{proof}
    By \cite[Prop. 5.23]{cutAMY}, non repeated Milnor invariants are invariant under link-homotopy.
    Now, let $S_{\! 1}$ and $S_{\! 2}$ be two unions of knotted punctured spheres with the same non repeated Milnor invariants. 
    By Proposition \ref{prop:Ribbon}, we may assume that $S_{\! 1}$ and $S_{\! 2}$ are ribbon, so that 
    by Proposition \ref{prop:Tube}, they can be described as 
    $\Tube(\Gamma_1)$ and $\Tube(\Gamma_2)$ for two welded forests $\Gamma_1$ and $\Gamma_2$. 
    It follows from Remark \ref{rk:TopologicalCase} that the non-repeated Milnor arc-invariants of $S_{\! i}$
    coincide with those of $\Gamma_i$ ($i=1,2$).
    Hence $\Gamma_1$ and $\Gamma_2$ have same non-repeated Milnor invariants, and 
    Proposition \ref{prop:wTreeClassification} implies that $\Gamma_1$ and $\Gamma_2$ are therefore sv--equivalent. 
    Now, in the same way as it was observed 
    in \cite{ABMW} in the welded string link case, an SV move on a welded graph can be realized as a link-homotopy on its image under $\Tube$; 
    more precisely, the image by Tube of an SV move is a ``self-circle
    crossing change'' as illustrated in \cite[Fig.~6]{ABMW}.
    Thus $S_{\! 1}$ and $S_{\! 2}$ are link-homotopic, and the proof is complete. 
\end{proof}

Note that, in particular, Theorem \ref{thm:classif} implies that linked disks (that is, unions of once-punctured knotted spheres) are all link-homotopically trivial, 
since linked disks admit no non-trivial longitudes. 
The following consequence, using results of \cite{MY7}, might be well-known to the experts.
\begin{cor}\label{cor:slice}
 Up to link-homotopy, a slice link bounds a unique union of slice disks. 
\end{cor}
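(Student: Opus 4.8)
The plan is to deduce this from the classification of the previous section, once the set-up is adapted to a non-trivial boundary. A union of slice disks for a slice link $L$ is, by definition, a properly embedded union of once-punctured spheres in $B^4$ whose boundary is the slice link $L$ rather than an unlink; that is, a knotted punctured sphere of type $(1,\ldots,1)$ in the sense of Remark~\ref{rem:ark}. Using the results of \cite{MY7}, the whole package developed above --- the link-homotopic reduction to ribbon surface-links (Proposition~\ref{prop:Ribbon}), the description via the generalized Tube map, the identification of the reduced fundamental group of the complement with a reduced free group (the slice-link analogue of \cite[Thm.~5.15]{AMW}, which is exactly where the choice of a fixed slicing of $L$ intervenes), and hence the link-homotopy classification by non repeated Milnor invariants (Theorem~\ref{thm:classif}) --- carries over to knotted punctured spheres with slice-link boundary. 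Granting this, the corollary is immediate: a type $(1,\ldots,1)$ object has exactly one marked vertex and no loop per component, hence no arc- and no loop-longitude, so it has no non repeated Milnor invariants at all, and the (extended) Theorem~\ref{thm:classif} forces any two unions of slice disks for $L$ to be link-homotopic.

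To make the middle step explicit --- and to see why no separate argument is needed for the type $(1,\ldots,1)$ case --- I would run the proof of Theorem~\ref{thm:classif} verbatim. First, by the (slice-link version of) Proposition~\ref{prop:Ribbon}, each union of slice disks for $L$ is link-homotopic to a ribbon one, which by Proposition~\ref{prop:Tube} is $\Tube(\Gamma)$ for some welded forest $\Gamma$ of type $\big((1,0),\ldots,(1,0)\big)$. Such a $\Gamma$ has a single marked vertex per component, so its reduced peripheral system reduces to the data of $\RRR G(\Gamma)\simeq\RRR F(\mu_1,\ldots,\mu_\ell)$ with no longitudes; all welded forests of this type therefore have equivalent (in fact identical) reduced peripheral systems, and Theorem~\ref{prop:RedClassification} (equivalently Proposition~\ref{prop:wTreeClassification}) makes them all sv--equivalent. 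Finally, as in Theorem~\ref{thm:classif}, each SV move realises, under $\Tube$, as a self-circle crossing change, which is a link-homotopy; composing gives a link-homotopy between any two ribbon unions of slice disks for $L$, and combining with the ribbon reduction yields the claim.

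The hard part, and the only real content, is the first paragraph: one must verify that every construction of the chapter extends when the prescribed boundary unlink is replaced by a slice link equipped with a fixed slicing. The two delicate points are that $\RRR\pi_1$ of the complement of a union of slice disks is still the reduced free group on the preferred meridians --- this is precisely what \cite{MY7} supplies, via the corresponding statement for the slicing of $L$ --- and that the Bartels--Teichner link-homotopy invoked in Proposition~\ref{prop:Ribbon} continues to create only ribbon-type linking among the attached tubes in this more general boundary situation, which again follows from the local nature of the Roseman-move analysis of \cite{AMW}. Once these are established, there is nothing left to compute, since type $(1,\ldots,1)$ admits no non repeated Milnor invariants; this is consistent with, and recovers, the remark made just before the statement that linked disks with unlinked boundary are all link-homotopically trivial.
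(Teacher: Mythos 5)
Your overall plan --- reduce to the classification theorem and observe that type $(1,\ldots,1)$ objects carry no non repeated Milnor invariants --- is the right instinct, but the route you take to get there has a genuine gap, and it is not the paper's route. You propose to extend the entire framework (ribbon reduction, Tube map, fundamental group computation, Theorem~\ref{thm:classif}) to knotted punctured spheres whose boundary is the slice link $L$ itself. The first step of that extension already fails as stated: by the paper's definition, a ribbon handlebody in $B^4$ meets $\partial B^4$ in a disjoint union of embedded \emph{disks}, so a ribbon surface-link in $B^4$ necessarily has unlink boundary. Since link-homotopy fixes the boundary at all times, a union of slice disks for a non-trivial slice link $L$ can never be link-homotopic to a ribbon surface-link in this sense, and the ``slice-link version of Proposition~\ref{prop:Ribbon}'' you invoke does not exist without substantially redefining the ribbon notion (which would in turn break the welded-graph correspondence $C$, whose marked vertices encode exactly those boundary disks). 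Your appeal to \cite{MY7} for the identification of $\RRR\pi_1$ of the complement is likewise asserted rather than established; \cite[Lemma~1.1]{MY7} is a statement about concordances and link-homotopy, not about fundamental groups.

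The paper sidesteps all of this with a concordance-stacking trick, which is precisely what \cite[Lemma~1.1]{MY7} is used for: choose a concordance $C\subset S^3\times[0,1]$ from $L$ to the unlink $U$; since $C\cdot\overline{C}$ is link-homotopic to the product $L\times[0,1]$, which is a collar of $\partial D_i$, each union of slice disks $D_i$ is link-homotopic to a stacking $C\cdot S_{\!i}$ where $S_{\!i}$ is a union of slice disks for the \emph{unlink} $U$. Theorem~\ref{thm:classif} then applies verbatim to $S_{\!1}$ and $S_{\!2}$ (type $(1,\ldots,1)$ with unlink boundary, hence no invariants), and the link-homotopy between them propagates to $D_1$ and $D_2$. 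Note that Remark~\ref{rem:ark} makes clear the logical order is the reverse of what you propose: the extension of the classification to slice-link boundary is a \emph{consequence} of this trick, not a prerequisite for the corollary. If you want to salvage your write-up, replace your first and third paragraphs by this reduction to the unlink case; your middle paragraph (no longitudes, hence no invariants, hence sv--equivalence and link-homotopy) is then correct and matches the paper's use of Theorem~\ref{thm:classif}.
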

\begin{proof}
Let $L$ be a slice link in $S^3$, and let $D_1$ and $D_2$ be two unions of slice disks for $L$ in the $4$--ball. 
Pick in $S^3\times [0,1]$ a concordance $C$ from $L$ to the unlink $U$, 
and let $\overline{C}$ be the mirror image of $C$.
By \cite[Lemma 1.1]{MY7}, the stacking product $C\cdot \overline{C}$ is link-homotopic to the product $L\times [0,1]$. 
Since, for $i=1,2$,  a collar neighborhood of the boundary of $D_i$ is isotopic to $L\times [0,1]$, we have that $D_i$ is link-homotopic to a product $C\cdot S_{\! i}$, 
where $S_{\! i}$ is a union of slice disks for the unlink $U$. 
But as noted above, by Theorem \ref{thm:classif} we have that $S_{\! 1}$ and $S_{\! 2}$ are link-homotopic. 
It follows that $D_1$ and $D_2$ are link-homotopic, as desired. 
\end{proof}
\begin{remarque}\label{rem:ark}
The trick used in the proof of Corollary \ref{cor:slice} above,
relying on \cite[Lemma~1.1]{MY7}, applies more generally to all knotted punctured spheres. 
This allows to extend the classification of Theorem \ref{thm:classif} to a larger class of surface-links, which are  images of  proper embeddings in $B^4$ of
a union of punctured spheres $\sqcup_{i=1}^\ell S_{\! i}$, such that
the boundary is sent to a fixed \emph{slice} link in $\partial B^4=S^3$. 
\end{remarque}

For the classification given in Theorem \ref{thm:classif}, we considered link-homotopies sending the boundary to the fixed unlink $U$ at all time. 
This condition can be relaxed to define \emph{weak link-homotopies}, as homotopies through proper immersions that may
not preserve the image of the boundary.
\begin{cor}
 Any two unions of knotted punctured spheres with same type are weakly link-homotopic.
\end{cor}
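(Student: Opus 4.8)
The plan is to show that every union $S$ of knotted punctured spheres of type $(m_1,\ldots,m_\ell)$ is weakly link-homotopic to a fixed standard model $S_{\mathrm{std}}$ of that type (the obviously unknotted and unlinked one), and then to conclude by symmetry and transitivity of weak link-homotopy that any two such unions are weakly link-homotopic. By Theorem~\ref{thm:classif}, it is enough to produce a weak link-homotopy after which all non repeated Milnor invariants of $S$ vanish: the resulting surface-link then has the same (trivial) invariants as $S_{\mathrm{std}}$, hence is link-homotopic --- a fortiori weakly link-homotopic --- to it. One may first invoke Proposition~\ref{prop:Ribbon} and Proposition~\ref{prop:Tube} to assume $S=\Tube(\Gamma)$ for a welded forest $\Gamma$, so that the arc-longitudes are literally edge decorations; this is not logically necessary, since non repeated Milnor invariants are defined for arbitrary knotted punctured spheres, but it makes the bookkeeping below more transparent.

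The key construction is a \emph{boundary drag}. Fix $i\in\{1,\ldots,\ell\}$ and $j\in\{2,\ldots,m_i\}$ and a loop $\delta$ in $S^3=\partial B^4$, based near the $j$-th boundary circle of $S_i$ and avoiding the boundary circles of every component $S_k$ with $k\neq i$. Pushing a thin finger of $S_i$ out along $\delta$, inside a collar of $\partial B^4$, and then back, drags this boundary circle once around $\delta$. This is a weak link-homotopy: the boundary stays in $\partial B^4$, distinct components stay disjoint (the finger and $\delta$ avoid $S_k$ for $k\neq i$), only self-intersections of $S_i$ occur during the motion, and both the topological type and the boundary unlink are unchanged. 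Its only effect is to multiply the arc-longitude $\alpha^i_j$ by the image of $[\delta]$ under $\pi_1\!\big(S^3\setminus\bigsqcup_{k\neq i}\partial S_k\big)\to\pi_1(B^4\setminus S)$ (up to a fixed conjugation), all other longitudes being left untouched if $\delta$ and the finger are routed away from the relevant arcs. Since the source group is free on the meridians of the circles $\partial S_k$ ($k\neq i$), which map to conjugates of the meridians $\mu_k$, a suitable choice of $\delta$ realizes the multiplication of $\alpha^i_j$ by any product of iterated commutators in conjugates of the $\mu_k$, $k\neq i$.

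I would then kill the invariants degree by degree. Suppose all non repeated Milnor invariants indexed by sequences of length $<r$ vanish, and fix a pair $(i,j)$. The degree-$r$ part of the logarithm of the reduced Magnus expansion of $\alpha^i_j$ is the image of a genuine degree-$r$ Lie element in $X_1,\ldots,X_\ell$; applying the Lie-algebra map $X_i\mapsto 0$ yields a degree-$r$ Lie element $L$ in the $X_k$, $k\neq i$, whose multilinear part records exactly the degree-$r$ non repeated Milnor data attached to $(i,j)$. Choosing a product $c$ of $r$-fold iterated commutators in conjugates of the $\mu_k$, $k\neq i$, with degree-$r$ Magnus logarithm equal to $-L$, and performing the corresponding boundary drag, multiplies $\alpha^i_j$ by $c$; because the correction starts in degree $r$, the lower-degree invariants are preserved while all degree-$r$ invariants attached to $(i,j)$ are annihilated. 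Running over all $(i,j)$ clears degree $r$, and one proceeds up to $r=\ell-1$. After finitely many drags every non repeated Milnor invariant vanishes, and Theorem~\ref{thm:classif} completes the proof.

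The main obstacle is the geometric bookkeeping underlying the second paragraph: verifying precisely that a boundary drag multiplies the targeted arc-longitude by the prescribed element of $\pi_1(B^4\setminus S)$ and has no side effect on the remaining longitudes, and that the required degree-$r$ corrections genuinely lie in the subgroup generated by the meridians of the other components' boundary circles (recall that $\delta$ must stay in $\partial B^4$, so only those meridians and their iterated commutators are available). This is where the choices of basing and of the dragging loops must be made with care, and where passing to the welded forest model $\Gamma$ of $S$ --- in which arc-longitudes are edge decorations and drags are explicit modifications of those decorations --- is likely to be the most efficient way to organize the argument.
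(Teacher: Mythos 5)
Your overall strategy is sound and would reach the correct conclusion, but it is organized quite differently from the paper's argument, and its central geometric claim is left unproved. The paper does not modify $S$ by boundary drags; instead it builds, from the arc-longitudes of $S$, a welded string link $L$ realizing these words, replaces $\Tube(L)$ by a \emph{monotone} ribbon $2$--string link using \cite[Thm.~2.30]{ABMW} --- i.e.\ the trace of an ambient isotopy $(f_t)$ of an unlink in $S^3$ --- and caps it with the unknotted model $S_{\! 0}$. The resulting $S'$ has the same non repeated Milnor invariants as $S$ by construction, so Theorem~\ref{thm:classif} gives a link-homotopy from $S$ to $S'$, and undoing $(f_t)$ in a collar is an ambient isotopy, not fixing $\partial B^4$, carrying $S'$ to $S_{\! 0}$. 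The point of this detour is that the boundary-moving isotopy is produced \emph{together with} a surface on which its effect is known, so no invariant ever has to be tracked through a boundary motion. Your proof runs the same two ingredients in the opposite order: you apply the boundary-moving isotopies (your ``drags'') directly to $S$ and must therefore compute their effect on the peripheral system. That computation --- that dragging the $j$-th boundary circle of $S_{\! i}$ around $\delta$ multiplies $\alpha^i_j$ by the image of $[\delta]$ and changes nothing else --- is exactly the crux, and you only assert it, flagging it yourself as the main obstacle. It is true and provable (most efficiently by passing to the welded forest model, where the drag amounts to appending a word to the decoration of the edge ending at the $j$-th marked vertex, and then invoking Remark~\ref{rk:TopologicalCase}), but as written this is a genuine gap, and filling it amounts to re-deriving by hand what the paper imports wholesale from the monotone representative theorem.

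Two smaller points on the algebraic step. First, under your inductive hypothesis the degree-$r$ part of the logarithm of $E(\alpha^i_j)$ need not be a Lie element, since the coefficients of monomials containing $X_i$ (which are not non repeated Milnor invariants) may survive in lower degrees; you should first apply the projection $\mu_i\mapsto 1$, landing in $\RRR F(\mu_k : k\neq i)$, and only then take logarithms. Second, the induction on $r$ is unnecessary: the meridian of the \emph{first} boundary circle of $S_{\! k}$ maps to the preferred meridian $\mu_k$ itself, not merely to a conjugate, so a single drag per pair $(i,j)$, along a loop realizing the inverse of the projection of $\alpha^i_j$ as a word in the $\mu_k$ with $k\neq i$, kills all non repeated invariants attached to $(i,j)$ at once.
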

\begin{proof}
Let us call \emph{union of unknotted punctured spheres}, a union of punctured spheres which admits a ribbon filling with no ribbon disk. It follows
from the same arguments as in the proof of Lemma \ref{lem:CBij}, that such a union of unknotted punctured spheres is unique. 
We denote by $S_{\! 0}$ these unknotted punctured spheres. 
Let us prove that any union $S$ of $\ell$ knotted punctured spheres is weakly link-homotopic to $S_{\! 0}$.

  The strategy is to provide a union of $\ell$ knotted punctured
  spheres $S'$ which has, on one hand, the same non repeated Milnor invariants as
  $S$, so that it is link-homotopic to $S$ by 
Theorem \ref{thm:classif}, and which is on the other hand ``nice enough''  to be unknotted by an ambient isotopy that does not fix $\partial B^4$.
For that, we consider the arc-longitudes of $S$, seen as words in 
the reduced free group on $\ell$ generators, and build an $\ell$--component welded string link $L$ realizing theses words as its longitudes: see for example \cite[Fig.~2.1]{MY7} or \cite[Rem.4.23]{ABMW}. 
The image of $L$ by the Tube map is an $\big(\sum_{i=1}^\ell m_i\big)$--component $2$--string link which is ribbon.
By \cite[Theorem 2.30]{ABMW}, $\Tube(L)$ is link-homotopic to a \emph{monotone} ribbon $2$--string link, the latter being the $4$--dimensional trace of an ambient isotopy $(f_t)_{t\in[0,1]}$ of an unlink in $S^3$.
On the unlink $U$ forming the ``upper'' boundary of this monotone
ribbon $2$--string link, we glue a copy of $S_{\! 0}$: this yields the
desired union of knotted punctured spheres $S'$. By construction, it
has the same Milnor invariants as $S$, and performing backward
$(f_t)_{t\in[0,1]}$ on a collar neigborhood of $\partial B^4$ does
 provide an ambient isotopy which unknot $S'$ into $S_{\! 0}$.
\end{proof}

\section{Proof of Theorem \ref{th:Embed}}\label{sec:delamort}

We prepare below three technical lemmas that will be used in the proof
of Theorem \ref{th:Embed}. 

\begin{lemme}\label{lem:Star}
  Two Gauss diagrams which differ only locally
  as
  \[
\dessin{1.875cm}{StarLem_1}\hspace{1cm}\textrm{and}\hspace{1cm}\dessin{1.875cm}{StarLem_2},
\]
where each triad of open arrows stands for any bunch of tails, are equivalent up to welded moves, if the concatenated word $w_1\cdots w_k$ is trivial as a free group element.
\end{lemme}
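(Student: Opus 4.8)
The plan is to run an induction on the total, unreduced length of the word $W:=w_1\cdots w_k$. If $W$ has length $0$, then every $w_j$ is empty, so neither local picture carries any arrowhead; after reorganizing the three bunches of tails by TC moves and applying an ambient isotopy, the two pictures coincide and there is nothing to prove.

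Assume $W$ has positive length. Since $W$ is trivial in the free group and nonempty, it is not freely reduced, so it contains two consecutive letters $\tau^{\varepsilon}$ and $\tau^{-\varepsilon}$ that cancel; choose such an \emph{innermost} cancelling pair and let $h,h'$ be the two arrowheads realizing it. By the choice of an innermost pair, no other arrowhead lies between $h$ and $h'$, so the only thing that can separate them in the diagram is one of the three distinguished bunches of tails, which I would slide aside using the exchange moves of Figure~\ref{fig:exchange} together with TC moves; each exchange move pushes a tail past a head at the cost of conjugating the other endpoint of the corresponding arrow by a one-letter word, which at the level of $W$ only replaces it by another word that is again trivial in the free group and still admits $h,h'$ as a cancelling pair. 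Once $h$ and $h'$ are adjacent, their tails lie in a common distinguished bunch; using TC moves inside that bunch I would make these two tails adjacent, after which $h$ and $h'$ are the two heads of a pair of parallel, oppositely-signed arrows with adjacent heads and adjacent tails — that is, a Reidemeister~2 configuration. Removing this pair by a Reidemeister~2 move produces two Gauss diagrams that again differ only locally as in the statement, now carrying the strictly shorter word $W'$ obtained by the corresponding free-reduction step; $W'$ is again trivial, so the induction hypothesis applies and shows these two diagrams are welded equivalent. Reinstating the deleted pair by a Reidemeister~2 move, and undoing the auxiliary TC and exchange moves, then shows that the original pair of diagrams is welded equivalent as well.

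The substantive difficulty is not any individual move but the bookkeeping. One has to run the case analysis according to which of the three tail bunches — if any — lies between $h$ and $h'$, including the cases where $h$, $h'$, or both have their own tail in that very bunch, and one has to keep track of the fact that inserting or deleting an arrow splits or merges a sub-arc of $M$, hence renames some of the letters involved. The key stability facts are that conjugation and free-equivalent respelling preserve triviality in the free group, and that each Reidemeister~2 cancellation strictly shortens $W$; together these keep the induction well-founded, and checking that they survive all the auxiliary TC and exchange moves is the point that requires care.
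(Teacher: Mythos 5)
Your induction-on-word-length strategy is genuinely different from the paper's argument, which disposes of the lemma in one pass: the single tail whose position distinguishes the two diagrams is pushed through the whole local picture by exchange moves, at the sole cost of conjugating \emph{its own head} by $w_1\cdots w_k$; since that word is trivial, the inserted conjugating arrows cancel pairwise by TC and Reidemeister~2 moves, and the word $w_1\cdots w_k$ itself is never touched. Your route instead modifies the word at every step, and this is where the two real gaps lie.

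First, the assertion that sliding an obstructing bunch of tails past a head ``only replaces $W$ by another word that is again trivial in the free group'' is not justified as stated, and its naive reading is false: conjugating one occurrence of a letter in a trivial word generally destroys triviality (e.g.\ $x\overline{x}\mapsto(\tau x\overline{\tau})\overline{x}$). What saves the claim is that the exchange moves of Figure~\ref{fig:exchange} conjugate the heads of \emph{all} arrows whose tails lie in the displaced sub-arc, i.e.\ they apply the substitution $\delta\mapsto v\delta\overline{v}$ (together with a merging of sub-arc generators) uniformly across the diagram; this is an endomorphism of the free group and therefore sends trivial words to trivial words. Without this observation the inductive hypothesis is not available. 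Second, the case you flag but do not resolve is precisely the one where your mechanism breaks: if the cancelling pair $h,h'$ carries the letter of the sub-arc lying \emph{between} $h$ and $h'$, then both of their tails sit in the very bunch you need to slide aside, and an exchange move cannot push a tail past the head of its own arrow. That configuration must be handled separately (e.g.\ by Reidemeister~1 removal of each of the two arrows after TC moves, keeping track of the resulting merging of sub-arcs), and as written your procedure simply does not apply to it. Both gaps are repairable, but the resulting proof is considerably heavier than the paper's two-sentence argument, which avoids every one of these case distinctions by never moving anything except the one distinguishing tail.
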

\begin{proof}
  The leftmost tail in the left-hand diagram, can be pushed to the
  right at the cost of conjugating its head by the word $w_1\cdots
  w_k$ by the exchange moves of Figure \ref{fig:exchange}. But since
  this word is trivial as a
  free group element, all the conjugating arrows can be removed pairwise using Reid2 moves.
\end{proof}

\begin{lemme}\label{lem:GenHH_1}
  Up to welded moves, two Gauss diagrams which differ only locally as
  \[
    \dessin{2.97cm}{GGenHH_1}
    \hspace{1cm}\textrm{and}\hspace{1cm}
        \dessin{2.97cm}{GGenHH_2},
      \]
      where none of the right-pointing tails are connected to $w_1$,
  are equivalent.
\end{lemme}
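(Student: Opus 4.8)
The plan is to argue exactly in the spirit of Lemma~\ref{lem:Star}: realise the passage from the first diagram to the second by sliding a bunch of arrow tails across the depicted portion, using only the exchange moves of Figure~\ref{fig:exchange} together with TC moves, and then clearing the parasitic arrows so produced by Reidemeister~2 moves (after reorganising with Reidemeister~3 and exchange moves if needed). Concretely, I would first use TC to gather all the right-pointing tails into a single adjacent bunch, so that they can be transported as one. Then I would push this bunch, one head at a time, across the sub-arc carrying $w_1$ (and whatever separates the two configurations), recording at each step that crossing the head of an arrow $A$ conjugates the \emph{other} endpoint of $A$ by the one-letter word attached to the tails being pushed; since the bunch is moved as a whole, the net effect on each such endpoint is conjugation by the full word read off the bunch, inserted as a pair of oppositely-oriented blocks of heads on either side of it.

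The point where the hypothesis enters is the following. Because none of the right-pointing tails is connected to $w_1$ --- i.e.\ no arrow has its tail in that bunch and its head among the heads making up $w_1$ --- none of the heads constituting $w_1$ has its associated tail in the moving bunch, so sliding the bunch across $w_1$ leaves $w_1$ itself untouched; the only endpoints whose decorations get conjugated sit strictly outside $w_1$. Matching the resulting diagram against the target configuration, the conjugating blocks of heads inserted on either side of those endpoints are precisely the ones already present in the second diagram (respectively their inverses), so after the slide the two diagrams literally agree; any remaining pair of adjacent, oppositely-signed heads sharing a tail is then deleted by a Reid2 move, exactly as at the end of the proof of Lemma~\ref{lem:Star}. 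In effect this lemma isolates a degenerate family of $\Upsilon$-type modifications which, under the connectivity hypothesis, are already consequences of the welded moves.

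The main obstacle I anticipate is the bookkeeping: one must keep precise track of which endpoint of each affected arrow moves, on which side of each head the conjugating arrows get inserted, and how successive conjugations compose, all while the alphabet of sub-arcs of $M$ keeps changing as new heads appear. In particular, the crucial check is that ``not connected to $w_1$'' is exactly the condition that forbids the creation of an arrow with both endpoints inside the local picture: if such a self-arrow were produced, the two diagrams would only be related through a genuine $\Upsilon$ move (Definition~\ref{def:HulaHoop}) rather than by welded moves alone. A secondary, routine point is that some tails of the moving bunch may have their heads outside the depicted region; here one simply invokes the locality of the exchange moves and of TC to argue that the slide disturbs nothing outside the picture.
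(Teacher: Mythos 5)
Your plan rests on a misreading of the exchange move, and of what actually differs between the two diagrams. First, the exchange move: when an endpoint of an arrow $A_1$ is pushed through the adjacent head of an arrow $A_2$, it is the \emph{other endpoint of $A_1$} that gets conjugated, by the one-letter word corresponding to $A_2$ --- not, as you assert, the other endpoint of the crossed arrow by the word read off the pushed tails. Consequently, sliding the bunch of right-pointing tails across the heads forming $w_1$ would insert conjugating arrows next to the heads of those right-pointing arrows, which lie \emph{outside} the depicted region; the resulting diagram would then differ from the original away from the local picture, so this slide can never by itself realize an identification of two diagrams that are required to agree outside the picture (unless the total word crossed is trivial, which is exactly the situation of Lemma \ref{lem:Star}, and $w_1$ alone is not trivial).

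Second, and more fundamentally, the right-pointing tails occupy the same position in both diagrams: they are spectators, and the hypothesis that none of them is connected to $w_1$ serves to guarantee that the manipulations of $w_1$ and $\overline w_1$ do not interfere with them --- not to let you transport them across $w_1$. What actually differs between the two configurations is the placement of the cancelling pair of arrows $a_\varepsilon$, $a_{-\varepsilon}$ (their tails sit on the top vertical sub-arc in one diagram and on the bottom one in the other), together with the attached inverse words $w_1$, $\overline w_1$. The paper's proof accordingly works on this pair: it pushes $\overline w_1$, resp. $w_1$, through the head of $a_{-\varepsilon}$, resp. $a_\varepsilon$ (conjugating the tails of the $w_1$--arrows by $x_t$), deletes $a_{-\varepsilon}$ by a Reid1 move, observes that the conjugated words are still inverse to each other so that Lemma \ref{lem:Star} applies to relocate the tail of $a_\varepsilon$ and all conjugating tails down to $x_b$, deletes $a_\varepsilon$, and finally checks that the left- and right-hand diagrams both reduce to one and the same local diagram. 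Your proposal never engages with $a_{\pm\varepsilon}$ or with the pair $w_1$, $\overline w_1$, which is where all the content of the lemma lies; as written, it does not prove the statement.
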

\begin{proof}
  We start from the left-hand diagram. We denote by $a_{\varepsilon}$ and $a_{-\varepsilon}$ the depicted $\varepsilon$ and
  $(-\varepsilon)$--signed arrows, and by $x_t$ and $x_b$ the depicted vertical top and bottom sub-arcs of the diagram. 
  Then $\overline w_1$ can be pushed through the head of $a_{-\varepsilon}$, so that
  $a_{-\varepsilon}$ can be removed with a Reid1 move.
  Pushing $\overline w_1$ through  $a_{-\varepsilon}$ in this way,
  this has the effect of conjugating all tails of arrows in $\overline w_1$ by $x_t$, and we denote this by replacing the label $\overline w_1$ by $\overline w_1^{x_t}$. 
Next, $w_1$ can be pushed through the head of 
  $a_{\varepsilon}$, at the cost of conjugating  $w_1$ by $x_t$.\footnote{Conjugation is also by $x_t$
    since $a_{\varepsilon}$ and $a_{-\varepsilon}$ have opposite signs
  and we are pushing $w_1$ and $\overline w_1$ in opposite directions according to the diagram orientation.} 
  We thus obtain the local diagram 
\[
\dessin{2.34cm}{GGenHH_11}. 
\]
Now, observe that each letter in $w_1$ corresponds to two arrows, one from $w_1$ and one from $\overline w_1$, which can be assumed to have  adjacent tails by move TC. When conjugated by $x_t$, these tails are now separated by two $x_t$--tailed arrows with opposite signs which can be removed using a Reid2 move. 
By this observation, we have that $w_1^{x_t}$ and $\overline w_1^{x_t}$ indeed represent inverse elements in the free group: we are therefore in the situation of Lemma \ref{lem:Star}. 
We apply hence Lemma \ref{lem:Star}, not only to the tail of $a_\varepsilon$, but also to all $x_t$--tailed arrows conjugating $w_1^{x_t}$ and $\overline w_1^{x_t}$: all these tails are pulled in this way down to the bottom interval $x_b$. After this deformation, the arrow $a_\varepsilon$ can be deleted thanks to a Reid1
move, while all conjugating $x_t$--tailed arrows are turned into
conjugating $x_b$--tailed arrows, since their tails now sit on $x_b$. As a result, we obtain the following local diagram: 
\[
\dessin{2.19cm}{GGenHH_12}.
\]

We now turn to the right-hand diagram in the statement. 
We can perform a similar sequence of moves as above, except that, at the final step, we only apply Lemma \ref{lem:Star} to the arrow $a_{-\varepsilon}$ (that is, we leave all conjugating $x_b$--tailed arrows 
on $x_b$). This yields the very same diagram as above, which concludes the proof. 
\end{proof}

\begin{lemme}\label{lem:GenHH_2}
  Up to $\Upsilon$ and welded moves, two Gauss diagrams which
  differ only locally as:
     \[
    \dessin{2.97cm}{GGenHH_3}
    \hspace{1cm}\textrm{and}\hspace{1cm}
        \dessin{2.97cm}{GGenHH_4},
      \]
      where none of the right-pointing tails are connected to $w_1$,
      $w_2$ or $w_3$,
  are equivalent.
\end{lemme}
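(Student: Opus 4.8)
The plan is to follow closely the strategy already used for Lemma~\ref{lem:GenHH_1}, inserting a single $\Upsilon$ move at the one point where welded moves alone do not suffice. Starting from the left-hand diagram, I would first isolate the depicted self-arrows (the $\varepsilon$-- and $\eta$--signed ones) together with the vertical sub-arcs they bound, and push the words $w_1,w_2,w_3$ through the heads of the oppositely-signed arrows, deleting the freed arrows by Reid1 moves; as in Lemma~\ref{lem:GenHH_1}, each such push conjugates the displaced bunch of tails by a one-letter word, and the hypothesis that none of the right-pointing tails are connected to $w_1,w_2,w_3$ ensures these conjugations only affect the depicted portion of the diagram.

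Next, exactly as in the proof of Lemma~\ref{lem:GenHH_1}, I would observe that the conjugated copies of each $w_i$ and $\overline w_i$ represent mutually inverse elements of the free group (after regrouping tails with TC, the intervening oppositely-signed arrows cancel pairwise by Reid2), so that Lemma~\ref{lem:Star} applies. This lets me slide the relevant bunches of tails freely along the affected sub-arcs and collect them on a single sub-arc; one may at this stage also invoke Lemma~\ref{lem:GenHH_1} itself on the sub-configuration where only one word is involved, so that only the part genuinely requiring the new move remains. At that point the two sides of the claimed equivalence disagree only by the routing of one bunch of tails past the central configuration, and this rerouting is precisely an instance of the $\Upsilon$ move — in the appropriate version from Remark~\ref{rem:versions}, with the word, the signs $\varepsilon,\eta$, and the number of tail bunches dictated by the current local picture, and with its non-connectivity hypothesis supplied by the assumption in the statement. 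Applying this $\Upsilon$ move and then cleaning up with Reid2, TC and Reid1 moves yields a normal form.

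Finally I would run the analogous sequence of moves starting from the right-hand diagram: by the same bookkeeping it lands on the identical normal form, which proves that the two diagrams are equivalent up to $\Upsilon$ and welded moves. The main obstacle is the precise placement of the $\Upsilon$ move: one must verify that, at the moment it is invoked, the local picture genuinely matches one of the versions of $\Upsilon$ (correct signs, correct word, correct number of tail bunches) and that the conjugations accumulated from the earlier pushes are exactly those needed for \emph{both} sides to reach the same diagram — everything else is a routine, if somewhat lengthy, diagram chase of the type already carried out for Lemma~\ref{lem:GenHH_1}.
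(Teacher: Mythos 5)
Your overall instinct---strip the words away by conjugation so that a bare $\Upsilon$ configuration emerges, apply $\Upsilon$ once, and restore---is the reduction the paper carries out, but two points in your write-up are not loose ends: they are places where the argument as written breaks. You propose to push the words through the heads of the oppositely-signed arrows and then ``delete the freed arrows by Reid1 moves''. That works in Lemma \ref{lem:GenHH_1}, where there are only two oppositely-signed self-arrows and the configuration is genuinely reducible by welded moves. Here the four self-arrows $a_{\pm\varepsilon}$, $a_{\pm\eta}$ are interleaved in exactly the pattern of Definition \ref{def:HulaHoop}: they \emph{are} the $\Upsilon$ configuration, none of them becomes removable by Reid1 after the words are displaced, and if they were, both sides of the lemma would collapse under welded moves alone and no $\Upsilon$ move would be needed at all. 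Relatedly, $w_1$ should not be pushed anywhere: it plays the role of the word $w$ already allowed in the $\Upsilon$ move, so the case $w_2=w_3=\emptyset$ is, verbatim, an instance of $\Upsilon$. The paper takes this as the base case and only strips $w_2$ and $w_3$.

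The step you flag as ``the main obstacle''---verifying that the accumulated conjugations can be undone---is exactly where the real content lies, and your proposal leaves it unresolved. The paper handles it asymmetrically: stripping $w_2$ past the heads of $a_{\pm\eta}$ conjugates it by the arc $x_r$ carrying their tails, and since that arc is unaffected by the $\Upsilon$ move one simply pushes back afterwards; stripping $w_3$ past the heads of $a_{\pm\varepsilon}$ conjugates it by the top arc $x_t$, and here one \emph{cannot} immediately push back after the move. The fix is to apply Lemma \ref{lem:Star} to each $x_t$-tailed conjugating arrow (legitimate because the words separating $x_t$ from $x_b$ concatenate to the trivial element, and because by hypothesis the heads of these arrows lie outside the depicted region) so as to slide its tail down to $x_b$, after which the un-conjugation goes through. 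Without this mechanism, or an explicit common normal form in its place, the two sides of the statement are not shown to agree.
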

\begin{proof}
  We denote by $a_\alpha$
  the depicted $\alpha$--signed arrow (for
  $\alpha\in\{\pm\varepsilon,\pm\eta\}$), and by $x_t$ and $x_b$ the
  depicted vertical sub-arcs of the Gauss diagram. We also denote by $x_r$ the rightmost sub-arc of the diagram, which contains the tails of arrows $a_\eta$ and $a_{-\eta}$.

  We first note that in the case where $w_2=w_3=\emptyset$, the local deformation of the  statement is nothing but the $\Upsilon$ move; hence the case  $w_2=w_3=\emptyset$ is immediate.
  
  Now let us show how the case where $w_3=\emptyset$, reduces to this  $w_2=w_3=\emptyset$ case. 
  This is done by pushing, in the left-hand side diagram, $w_2$ and  $\overline w_2$ across the head of $a_\eta$ and $a_{-\eta}$, respectively. 
  As in the proof of Lemma \ref{lem:GenHH_1}, this deformation is  achieved at the cost of conjugating these words $w_2$ and
  $\overline w_2$ by $x_r$. 
 We can then apply the $w_2=w_3=\emptyset$ case, and then push back the $x_r$--conjugated word $w_2$, resp.  $\overline w_2$, across the head of $a_\eta$, resp. $a_{-\eta}$. Being the reverse process of the initial deformation, this simply unconjugates these words, and yields the right-hand-side diagram.
 
 Finally, we observe that the general case similarly reduces to the $w_3=\emptyset$ case. 
 The argument is the same as above, except for one additional subtlety. 
Starting with the left-hand diagram, we push $w_3$ and  $\overline w_3$ across the head of $a_\varepsilon$ and $a_{-\varepsilon}$, respectively. 
We can then apply the $w_3=\emptyset$ case of the statement. 
The point is that the initial deformation was achieved at the cost of conjugating the words $w_3$ and $\overline w_3$ by $x_t$ (and not $x_r$ as above). Hence we cannot immediately push back these conjugated words to obtain the desired move. 
But each of the $x_t$--conjugating arrows introduced at this initial
stage, has its tail on the top vertical sub-arc $x_t$, and, by assumption, its head is located away from the depicted part of
the diagram. Hence Lemma \ref{lem:Star} can be applied to each of
these arrows, whose tail can be moved down to the bottom vertical part
$x_b$. Indeed, the concatenation of the words separating $x_t$ and $x_b$ is equivalent to the empty word, so that the lemma applies.
After applying Lemma \ref{lem:Star} in this way, $w_3$ and $\overline w_3$ can be unconjugated by pushing them back in place, and the proof is complete.
\end{proof}

Before proceeding with the proof of Theorem \ref{th:Embed}, we introduce some terminology.

Each connected component of a $w$--graph of type $(2,0)$ has two marked vertices, which are ordered according to the orientation: 
the first one is called \emph{initial vertex} and the second one  is called \emph{final vertex}. There is also a unique shortest path from the initial to the final vertex: every edge which is not on this path is called an \emph{extra edge}. 
Note that the $w$--graph is linear if it has no extra edge. 
\\
Every vertex $v$ of a $w$--graph of type $(2,0)$ which is not the final vertex, has a unique shortest path to the final vertex: the first edge on this path is called the \emph{out edge} of $v$. For vertices distinct from the initial vertex, the \emph{in edge} is similarly defined as the first
edge on the shortest path to the initial vertex. 
Note that the in and out edges of a vertex are distinct if and only if the vertex is on the shortest path from the initial to the final vertex.

In the same way, each connected component of a $w$--graph of type $(0,1)$ has a unique shortest non contractible path: edges which are not on this
path are also called \emph{extra} edges. Observe that the $w$--graph is cyclic if it has no extra edge. 

\begin{proof}[Proof of Theorem \ref{th:Embed}]
  Let us  consider the string link case.
  First, we observe that the map $\psi_{SL}$ is invariant under $\Upsilon$ moves.  
  Indeed, on the one hand we have by definition that 
  \[\psi_{SL}\left(\dessin{2.275cm}{HH_3}\right)=\dessin{2.5cm}{IInvHH_1}, 
  \]
  and applying a sequence of expansion, contraction and push moves to the resulting w-graph,  shows that 
  \[
   \dessin{2.5cm}{IInvHH_1} \xrightarrow{E}\dessin{2.5cm}{IInvHH_1bis}\xrightarrow{P}\dessin{2.5cm}{IInvHH_2}\xrightarrow{C} \dessin{2.5cm}{IInvHH_2bis}\xrightarrow{C}\dessin{2.5cm}{IInvHH_3},
  \]
 On the other hand, applying the $\psi_{SL}$ map and a similar
 sequence of expansion, contraction and push moves, gives: 
  \[
\psi_{SL}\left(\dessin{2.275cm}{HH_4}\right)=\dessin{2.5cm}{IInvHH_6}\xrightarrow{E,P,C}\dessin{2.5cm}{IInvHH_3bis}=\dessin{2.5cm}{IInvHH_3}.
\]
Therefore $\psi_{SL}$ induces a well-defined map 
\begin{gather*}
  \overline \psi_{SL}: \big\{\textrm{welded string links up to $\Upsilon$ moves}\big\}\to \big\{\textrm{welded graphs of type $(2,0)$}\big\}.
\end{gather*}

We now define an inverse map $\overline\xi_{SL}$ for $\overline\psi_{SL}$. 
Let $\Gamma$ be a $w$--graph of type $(2,0)$. 
The description of $\overline\xi_{SL}(\Gamma)$ will be given in three steps, summarized in Figure \ref{fig:Linearization}. 
The first step provides an algorithm for turning $\Gamma$ into a linear $w$--graph, by a canonical sequence of push and
contraction moves; the second step specifies the edge labels on this linear $w$--graph, induced by those of $\Gamma$; 
the third step associates a  welded string link to the resulting welded graph.
\begin{figure}
  \[
    \begin{array}{ccccc}
    \dessin{2.5cm}{TreeLinear_1} \longrightarrow
      \dessin{2.5cm}{TreeLinear_2}&&\dessin{2.5cm}{TreeLinear_3} \longrightarrow
      \dessin{2.5cm}{TreeLinear_4}&&
                                     \begin{array}{c}
\dessin{.625cm}{Word_1}\\\rotatebox {270}{$\leadsto$}\\\dessin{1.41cm}{Word_2}
                                     \end{array}
\\
      \textrm{step 1}&\hspace{1cm}&\textrm{step 2}&\hspace{1cm}&\textrm{step 3}
    \end{array}
  \]
  \caption{Definition of $\overline\xi_{SL}$}
  \label{fig:Linearization}
\end{figure}

\emph{Step 1.} For every vertex of $\Gamma$, we fix an arbitrary
ordering on the adjacent edges such that the out edge is last and,
if it exists, the in edge is penultimate. The first edge of a vertex
is called the \emph{leading} edge of the vertex, and the second one is
called the \emph{co-leading} edge. Using these orderings clock-wisely,
we obtain a planar way of embedding $\Gamma$ in $\R^2$; 
we can then ``contour''  these planar trees (see step 1 in Figure
\ref{fig:Linearization}) to get a union $I$ of intervals,
connecting initial vertices to final ones. We orient these intervals 
from the initial to the final vertex. Each vertex of $\Gamma$ corresponds
to at least one, possibly more, points on $I$: we keep only the point which is closest
to the final vertex. This turns $I$ into a linear graph, whose vertices are in one-to-one correspondence with
those of $\Gamma$, and whose edges are unions of edges of $\Gamma$. 

\emph{Step 2.}  Each edge $e$ of $I$ is  a union of edges $e_1,\cdots, e_k$  of $\Gamma$ with the same
or the reversed orientation. Let $w_i$ be the decoration of $e_i$ in $\Gamma$. Then we
decorate $e$ by the concatenation $w_1^{\e_1}\cdots w_k^{\e_k}$ where $e_i\in\{\pm 1\}$ is $1$ if and only if
the orientations of $e$ and $e_i$ agree; see step 2 in Figure \ref{fig:Linearization} for an
illustration. 
\\
At this point, we obtain a linear $w$--graph $\Gamma_l$ which is equivalent to $\Gamma$. Indeed, 
$\Gamma_l$ is the result of iteratively contracting, as outlined in the proof of Lemma \ref{lem:PC}, all extra edges with a univalent unmarked endpoint, starting with the one which is closest to the final vertex of the corresponding component of $I$. 

\emph{Step 3.}  
We apply, on the linear $w$--graph $\Gamma_l$, the exact same procedure as in the latter half of the proof of Lemma \ref{lem:PC}:  
we split each edge by S moves to obtain a linear $w$--graph labeled by
letters, and we replace each of these letter labels by an arrow as
illustrated in step 3 of Figure \ref{fig:Linearization}. The result is the desired welded string link $\overline\xi_{SL}(\Gamma)$; as observed in the proof of Lemma \ref{lem:PC}, 
this is indeed a preimage for $\Gamma_l$ by the map $\psi_{SL}$. 
\saut
 
We now have to prove that $\overline\xi_{SL}(\Gamma)$ is well defined, that is, $\overline\xi_{SL}(\Gamma)$ is invariant under all moves on $w$--graphs, and does not depend on the choice of planar embedding for $\Gamma$ (that is, on the chosen orderings at each vertex). 

First, it is easily checked that OR moves do not change the resulting Gauss diagram, so that we can freely assume that all edges are oriented toward the final vertex. 
Invariance under $S$ moves (hence under push moves) is also easy to check, as they introduce pairs of arrows which can be deleted by R2 moves.

Contraction moves turn out to be the most intricate moves to check. 
Contraction/expansion of a leading edge, which we shall call \emph{leading contraction/expansion} below, has no impact, so that invariance is directly checked. 
But contraction of  a non leading edge may change  drastically the position of some vertex in $\Gamma_l$. \\
First, let us observe that contraction of an arbitrary (non leading)
edge $e$ can always be realized by a sequence of expansion and contractions on either leading edges, or co-leading edges at a trivalent vertex; this is illustrated in the following figure, where $T$ is any tree attached to the edge $e$:\\[-0.4cm] 
\[
\dessin{1.25cm}{GenHH_9}\rightarrow
\dessin{1.72cm}{GenHH_10}\rightarrow
                          \dessin{1.72cm}{GenHH_11}\rightarrow
                          \dessin{1.72cm}{GenHH_12}
\rightarrow \dessin{1.72cm}{GenHH_13}\rightarrow \dessin{1.25cm}{GenHH_14}.
  \]
  There, the first step is a leading expansion,  followed by an expansion 
  on a  co-leading edge at a trivalent vertex; this turns edge $e$ into a leading edge, so that contraction can be performed; it then only remains to perform the inverse of the first two operations. \\
Hence we are left with proving the case of a contraction of a co-leading edge $e$ at a trivalent vertex. 
Using push moves and  leading contractions, the tree attached via the leading edge can be assumed to be linear, that is, we consider a $w$--graph of the form:\\[-0.5cm]
 \[
\dessin{1.56cm}{GenHH_1},
\]
where as above, $T$ is any tree, attached to the edge $e$ that we would like to contract. 
By recursively applying push moves and leading expansions, this $w$--graph can be turned into the following: \\[-0.5cm]
 \[
\dessin{2.2cm}{GenHH_2} \leadsto \cdots \leadsto  \dessin{2.2cm}{GenHH_2.5} \leadsto  \dessin{2.2cm}{GenHH_3}, 
\]
where the final step is achieved by a leading contraction. 
Furthermore, we may freely assume that for each $i=1,\cdots,k$, the letter $a_i$ appears at most once in the word $w'_i$. 
This is done by using iteratively the following trick during the above process:\\[-0.75cm] 
\[
\dessin{1.72cm}{Trick_1}\rightarrow\dessin{1.72cm}{Trick_2}\rightarrow\dessin{1.72cm}{Trick_3}.
  \]
  Here, the first operation is a leading expansion  (which allows for replacing each label $a_i$ by either $a_i$ or $a'_i$), followed by a push move. \\
Now, we show the invariance under the contraction of $e$  by induction on $k$. 
For $k=1$, there are two cases:
\begin{itemize}
\item Suppose that $a_1$ does not appear in $w_1$. 
 The figure below gives the image by $\overline\xi_{SL}$ of the $w$--graph before (left) and after (right) contraction on the edge $e$: 
  \[
    \dessin{2.5cm}{GenHH_4}
    \hspace{1cm}\textrm{and}\hspace{1cm}\dessin{2.5cm}{GenHH_5},
  \]
  where each pair of opposed vertical arrows have adjacent tails and opposite signs. 
  Note that the bunch of right-pointing arrows represent all occurrences of $a_1$ in the edge labelings, 
  so that we know by assumption that none of these tails is connected to the vertical arrow heads of the figure. 
  Starting from the left-hand side figure above, we aim at moving all left-pointing arrow tails from the bottom to the top vertical strand. 
  This is achieved by Lemma \ref{lem:Star} for any such arrow that is not connected to the vertical arrow heads; 
  left-pointing arrows that are connected to the vertical arrow heads, come by pairs with opposite signs, so that Lemma \ref{lem:GenHH_1} can be applied to perform the desired move.
\item Suppose that $a_1$ appears once in $w_1$. 
  Concretely this means that, in the above left-hand figure, a pair of
  vertical arrows corresponds to two of the right-pointing arrows:
    \[
    \dessin{2.5cm}{GenHH_4.5}.
  \]
  This case is handled as the previous one, except that we might use Lemma \ref{lem:GenHH_2} instead of Lemma \ref{lem:GenHH_1}.
\end{itemize}
The inductive step is illustrated below:
\[
  \dessin{2.5cm}{GenHH_3}\hspace{.5cm} \raisebox{.75cm}{\rotatebox{30}{$\rightarrow$}}
  \begin{array}{c}
    \dessin{2.5cm}{GenHH_6}\\
    \downarrow\\
    \dessin{2.5cm}{GenHH_7}
  \end{array}
\raisebox{-.75cm}{\rotatebox{30}{$\rightarrow$}}\hspace{.5cm} \dessin{2.5cm}{GenHH_8}.
  \]
  The first operation is an expansion  in the ``k=1 case'';  the second step is a leading
contraction, and the final operation is a contraction using the induction hypothesis. 
Finally, we undo all the previous steps to get back to the
initial configuration with the desired co-leading edge contracted.

Invariance under a change of ordering of the edges at some vertex is
now easily handled using push, expansion and contraction moves.
First, we note that, up to expansion and contraction moves, the graph can be assumed to be uni-trivalent. Then the procedure
  is schematically illustrated as follows:
\[
 \dessin{2.15cm}{Planar_1}\leftrightarrow \dessin{2.15cm}{Planar_2}\leftrightarrow \dessin{2.15cm}{Planar_4}\leftrightarrow \dessin{2.15cm}{Planar_5}\leftrightarrow \dessin{2.15cm}{Planar_7}\leftrightarrow \dessin{2.15cm}{Planar_8}
  \]
There, $T$ denotes a tree that we would like to ``move down'' by reversing the cyclic order at the trivalent vertex. 
With contraction and push moves, we first replace $T$ by a linear graph $T_L$: this is schematically denoted by a $w$-labeled edge in the above figure.  
Next we push down all the decorations of $T_L$, so that the vertical edges are empty and can be contracted. 
The reversed procedure can then be performed with the cyclic order reversed. 

We next check the invariance under an R1 move. 
At the level of the images by $\overline\xi_{SL}$, such a move creates two or one arrow(s), depending on whether the edge where the move is performed, is an extra edge or not. 
The head $h$ and tail $t$ of such an arrow are separated by tails and sequences of heads such that the concatenation of the corresponding words is trivial as a free group element. Lemma \ref{lem:Star} can hence be applied to pull $t$ next to $h$. The  arrow can then be removed using a Reid1 move.
 
 For R3 moves, consider in some welded graph $\Gamma$, an edge between two vertices $a$ and
 $b$, and decorated by a word $w$, and suppose that $\Gamma$ contains some other edge decorated by $wb$. 
 Let $\Gamma'$ be the result of the R3 move that replaces the latter label by $wa^w=aw$.
By construction, the linear welded graph $\Gamma_l$ associated with $\Gamma$ (step 2 of the definition of $\overline\xi_{SL}(\Gamma)$) also contains a $w$--labeled edge between vertices $a$ and $b$. 
 In the welded string link $\overline\xi_{SL}(\Gamma)$, this yields bunches of tails corresponding, respectively, to all occurrences of $a$ and $b$ in the
 edge decorations, separated by a sequence of heads which correspond to the word
 $w$. Pulling one tail from the $b$--bunch through the $w$--sequence
 of heads will conjugate the associated head by $w$. The resulting welded string link is precisely $\overline\xi_{SL}(\Gamma')$.

This concludes the proof that the map $\overline\xi_{SL}$ is  well-defined.
Moreover, it clearly satisfies
$\overline\xi_{SL}\circ\overline\psi_{SL}=\textrm{Id}$. This shows that
$\overline\psi_{SL}$ is injective, hence bijective, and the proof is complete in the string link case. 
\saut

The case of welded links is handled in the same way, except that extra edges are contracted in order to get a
cyclic $w$--graph instead of a linear one. 
There is however one extra issue: the resulting cyclic $w$--graph has no canonical orientation. 
This affects the 3-step procedure defining $\overline \xi_L$, since in step 3 we need to choose an arbitrary orientation on each circle component; 
this is precisely why one needs to introduce the Global Reversal move in the link case, which identifies two Gauss diagrams that differ by this choice of orientation. A more subtle point is that reversing the orientation on some component also affects step 1, as follows: 
\[
 \dessin{2.5cm}{linear1}\quad  \longrightarrow \quad \dessin{2.5cm}{linear2}\quad \textrm{ or }\quad \dessin{2.5cm}{linear3}.
\]
As the figure illustrates, in the linearization algorithm given in
step 1, the chosen orientation is used at each vertex $v$ of a
$w$--graph, to define the corresponding vertex $v_l$ in the linear
$w$--graph. This vertex $v_l$ represents, in the resulting Gauss
diagrams, a bunch of arrow tails:  Lemma \ref{lem:Star} ensures that
the Gauss diagrams resulting from the above two situations, are actually equivalent.
This concludes the proof of Theorem \ref{th:Embed}.
\end{proof}

\section{Extra moves on welded graphs}
\label{sec:ExtraMoves}

In this section, we explain how the moves of Figure \ref{fig:ExtraWeldedMoves} on welded graphs, follow from the C, OR, S and R3 moves of Figure \ref{fig:WeldedMoves}, as claimed at the end of Section \ref{sec:WeldedGraphs}.
We also show how one version the $\Upsilon$ move (in the sense of Remark \ref{rem:versions}) is implied by usual welded moves.

\subsection{Push and split moves}\label{sec:ExtraMoves0}
We first observe that the extra push (P) and split (Split) moves are consequences of the C, OR and S moves.   
For this purpose, we first note that, up to OR moves, the S move can be generalized to the case where some edge is oriented toward the vertex  where the move is performed; in that case, the prefix is then reversed and added at the end of the corresponding decoration:
     \[
     \dessin{2.06cm}{CGM6b1}\xleftrightarrow{\ \textrm{OR,S,OR}\ }
     \dessin{2.06cm}{CGM6b2}.
 \]
 Applying this variant of the S move iteratively, provides a P move: 
     \[
     \dessin{2.06cm}{CGM6t1}\xleftrightarrow{\ \textrm{S,$\dots$,S}\ }
     \dessin{2.06cm}{CGM6t2}.
   \]
 Moreover, combining this variant of the S move with a C move, allows to ``split''  edge decorations, thus providing a Split move:
   \[
 \dessin{1.24cm}{Split_1}\xleftrightarrow{\ \textrm{E}\ }
 \dessin{1.24cm}{Split_2}\xleftrightarrow{\ \textrm{S}\ } \dessin{1.24cm}{Split_3}.
 \]
\subsection{Rephrased R3 and generalized stabilization move}\label{sec:ExtraMoves1}
Next, we show that the Split, C, S and R3 moves on welded graphs, 
realize a rephrased R3 move. 
More precisely, we illustrate below how some edge label $w_1w_2$ can be replaced by $w_1 \overline w\,\overline awb w_2$, where $(a,b,w)$ is some other edge in the graph: 
   \[
   \begin{array}{ccccc}
 \dessin{1.24cm}{Insert_1} &\!\xleftrightarrow{\textrm{\ Split,E\ }}\!&
 \dessin{1.24cm}{Insert_3} & & \\
  &\!\xleftrightarrow{\textrm{\ S\ }}\!&\dessin{1.24cm}{Insert_4}& & \\
     &\!\xleftrightarrow{\textrm{\ R3\ }}\!&
                           \dessin{1.24cm}{Insert_5}
     &\!\xleftrightarrow{\textrm{\ Split\ }}\!&\dessin{1.24cm}{Insert_6}.
   \end{array}
   \]
The other versions of this move, that insert 
any cyclic permutation of $\overline w\,\overline awb$, or any cyclic
   permutation of it, are proved in a similar way.\\
   
Finally, the figure below illustrates how the generalized stabilization move, is achieved by a sequence of C,S and R3 moves: 
   \[
   \begin{array}{ccc}
     \dessin{2.75cm}{Stab_1}
     &\!\xleftrightarrow{\textrm{\ E,S\ }}\!&
      \dessin{2.75cm}{Stab_3}\\
     &\!\xleftrightarrow{\textrm{\ R3,$\dots$,R3}}\!&   \dessin{2.75cm}{Stab_4}\\
     &\!\xleftrightarrow{\textrm{\ S\ }}\! & \dessin{2.75cm}{Stab_5}\\
                 &\!\xleftrightarrow{\textrm{\ C\ }}\!&    \dessin{2.75cm}{Stab_6}
   \end{array}
 \]

\subsection{A trivial  $\Upsilon$ move}\label{sec:ExtraMoves2}

We finally show that, in the special case where $w=1$, $\eta=-\e$ and involving only the four self-arrows shown in the figure
of Definition \ref{def:HulaHoop}, the $\Upsilon$ move follows from Reid1,2,3 moves. 
This is illustrated in the following figure: 
 \[
   \begin{array}{ccccccc}
   \dessin{3.5cm}{TrivUpsilon_1}
    &\xrightarrow{\textrm{Reid1}}&
   \dessin{3.5cm}{TrivUpsilon_2}
   & \xrightarrow{\textrm{Reid3}}&
    \dessin{3.5cm}{TrivUpsilon_3}
      & \raisebox{-.5cm}{\rotatebox{325}{$\xrightarrow{\textrm{Reid2}}$}}&
                                      \\[-1cm]
&&&&&&\dessin{3.5cm}{TrivUpsilon_4}\\[-1cm]
        \dessin{3.5cm}{TrivUpsilon_8}
    &\xleftarrow{\textrm{Reid1}}&
   \dessin{3.5cm}{TrivUpsilon_7}
   & \xleftarrow{\textrm{Reid3}}&
    \dessin{3.5cm}{TrivUpsilon_6}
      & \raisebox{.5cm}{\rotatebox{35}{$\xleftarrow{\textrm{Reid2}}$}}&\\
   \end{array}
 \]
 The strategy is to connect the two sides of the move to a common
 symmetric configuration. For both, the deformation starts by inserting two (self)
 arrows with Reid1 moves, then combines two Reid3 moves with TC moves
 to rearrange the relative positions of the arrow tails, and deletes a pair of arrows with a Reid2 move.
 \\
To the authors' knowledge, this is the only version of the $\Upsilon$ move that follows from the usual welded moves. 

\bibliographystyle{abbrv}
\bibliography{References}

\begin{thebibliography}{10}

\bibitem{cEmilletueur}
E.~{A}rtin.
\newblock {Z}ur isotopie zweidimensionalen fl\"achen im \emph{R}$_4$.
\newblock {\em Abh. Math. Sem. Univ. Hamburg}, 4:174--177, 1926.

\bibitem{IndianPaper}
B.~{A}udoux.
\newblock {O}n the welded tube map.
\newblock In {\em {K}not theory and its applications}, volume 670 of {\em
  {C}ontemp. {M}ath.}, pages 261--284. {A}mer. {M}ath. {S}oc., {P}rovidence,
  {RI}, 2016.

\bibitem{Aud_HdR}
B.~{A}udoux.
\newblock {\em {A}pplications de modèles combinatoires issus de la topologie}.
\newblock {H}abilitation, {A}ix--{M}arseille {U}niversity, 2018.

\bibitem{ABMW}
B.~{A}udoux, P.~{B}ellingeri, J.-B. {M}eilhan, and E.~{W}agner.
\newblock {H}omotopy classification of ribbon tubes and welded string links.
\newblock {\em {A}nn. {S}c. {N}orm. {S}uper. {P}isa {C}l. {S}ci.},
  17(1):713--761, 2017.

\bibitem{AM}
B.~{A}udoux and J.-B. {M}eilhan.
\newblock {C}haracterization of the reduced peripheral system of links.
\newblock {\em J. Inst. Math. Jussieu}, 23(6):2441--2459, 2024.

\bibitem{AMW}
B.~{A}udoux, J.-B. {M}eilhan, and E.~{W}agner.
\newblock {O}n codimension two embeddings up to link-homotopy.
\newblock {\em {J}. {T}opol.}, 10(4):1107--1123, 2017.

\bibitem{cutAMY}
B.~Audoux, J.-B. Meilhan, and A.~Yasuhara.
\newblock Milnor concordance invariant for surface-links and beyond, 2021.
\newblock arXiv:2109.14578.

\bibitem{kAMY}
B.~Audoux, J.-B. Meilhan, and A.~Yasuhara.
\newblock $k$--reduced groups and milnor invariants.
\newblock {\em Math. Research Letters}, 32(1):1--37, 2025.

\bibitem{BND}
D.~Bar-Natan and Z.~Dancso.
\newblock Finite type invariants of w-knotted objects {II}: tangles, foams and
  the {K}ashiwara-{V}ergne problem.
\newblock {\em Math. Ann.}, 367(3-4):1517--1586, 2017.

\bibitem{BT}
A.~{Bartels} and P.~{Teichner}.
\newblock {All two dimensions links are null homotopic.}
\newblock {\em {Geom. Topol.}}, 3:235--252, 1999.

\bibitem{BH}
T.~E. Brendle and A.~Hatcher.
\newblock Configuration spaces of rings and wickets.
\newblock {\em Comment. Math. Helv.}, 88(1):131--162, 2013.

\bibitem{CKS}
J.~S. {C}arter, S.~{K}amada, and M.~{S}aito.
\newblock {\em {S}urfaces in $4$--space}, volume 142 of {\em {E}ncyclopaedia of
  {M}athematical {S}ciences}.
\newblock {S}pringer-{V}erlag, {B}erlin, 2004.
\newblock {L}ow-{D}imensional {T}opology, {III}.

\bibitem{Colombari}
B.~Colombari.
\newblock A diagrammatical characterization of milnor invariants.
\newblock {\em Alg. Geom. Top.}, to a ppear.

\bibitem{FRR}
R.~Fenn, R.~Rim{\'a}nyi, and C.~Rourke.
\newblock The braid-permutation group.
\newblock {\em Topology}, 36(1):123--135, 1997.

\bibitem{Fox}
R.~H. Fox.
\newblock A quick trip through knot theory.
\newblock In {\em Topology of 3-manifolds and related topics ({P}roc. {T}he
  {U}niv. of {G}eorgia {I}nstitute, 1961)}, pages 120--167. Prentice-Hall,
  Inc., Englewood Cliffs, NJ, 1961.

\bibitem{HaraKoko}
F.~Harary and L.~H. Kauffman.
\newblock Knots and graphs. {I}. {A}rc graphs and colorings.
\newblock {\em Adv. in Appl. Math.}, 22(3):312--337, 1999.

\bibitem{IK}
A.~Ichimori and T.~Kanenobu.
\newblock Ribbon torus knots presented by virtual knots with up to four
  crossings.
\newblock {\em Journal of Knot Theory and Its Ramifications}, 21(13):1240005,
  2012.

\bibitem{Kauffman}
L.~H. Kauffman.
\newblock Virtual knot theory.
\newblock {\em European J. Combin.}, 20(7):663--690, 1999.

\bibitem{KM}
L.~H. Kauffman and J.~a. Faria~Martins.
\newblock Invariants of welded virtual knots via crossed module invariants of
  knotted surfaces.
\newblock {\em Compos. Math.}, 144(4):1046--1080, 2008.

\bibitem{LeDimet}
J.-Y. Le~Dimet.
\newblock Cobordisme d'enlacements de disques.
\newblock {\em M\'em. Soc. Math. France (N.S.)}, (32):ii+92, 1988.

\bibitem{Maru}
Y.~Marumoto.
\newblock Stable equivalence of ribbon presentations.
\newblock {\em J. Knot T. Ram.}, 1:241--251, 1992.

\bibitem{surveyMilnor}
J.-B. Meilhan.
\newblock Linking number and {Milnor} invariants.
\newblock In {\em Encyclopedia of knot theory}, pages 817--829. Boca Raton, FL:
  CRC Press, 2021.

\bibitem{arrow}
J.-B. Meilhan and A.~Yasuhara.
\newblock Arrow calculus for welded and classical links.
\newblock {\em {Alg. Geom. Topol.}}, 19(1):397--456, 2019.

\bibitem{MY7}
J.-B. Meilhan and A.~Yasuhara.
\newblock Link concordances as surfaces in $4$--space and the $4$--dimensional
  milnor invariants.
\newblock {\em Indiana Univ. Math. J.}, 71:2647--2669, 2022.

\bibitem{Milnor}
J.~Milnor.
\newblock Link groups.
\newblock {\em Ann. of Math. (2)}, 59:177--195, 1954.

\bibitem{Milnor2}
J.~Milnor.
\newblock Isotopy of links. {A}lgebraic geometry and topology.
\newblock In {\em A symposium in honor of {S}. {L}efschetz}, pages 280--306.
  Princeton University Press, Princeton, N. J., 1957.

\bibitem{Satoh}
S.~Satoh.
\newblock Virtual knot presentation of ribbon torus-knots.
\newblock {\em J. Knot Theory Ramifications}, 9(4):531--542, 2000.

\bibitem{Waldo}
F.~Waldhausen.
\newblock {On irreducible 3-manifolds which are sufficiently large.}
\newblock {\em {Ann. Math. (2)}}, 87:56--88, 1968.

\bibitem{Blake}
B.~{Winter}.
\newblock {The classification of spun torus knots}.
\newblock {\em {J. Knot Theory Ramifications}}, 18(9):1287--1298, 2009.

\bibitem{yajima}
T.~Yajima.
\newblock On the fundamental groups of knotted {$2$}-manifolds in the
  {$4$}-space.
\newblock {\em J. Math. Osaka City Univ.}, 13:63--71, 1962.

\bibitem{Yaji}
T.~Yajima.
\newblock On simply knotted spheres in {$R^{4}$}.
\newblock {\em Osaka J. Math.}, 1:133--152, 1964.

\bibitem{yanagawa1}
T.~Yanagawa.
\newblock On ribbon {$2$}-knot: {T}he {$3$}-manifold bounded by the
  {$2$}-knots.
\newblock {\em Osaka J. Math.}, 6:447--464, 1969.

\bibitem{yanagawa2}
T.~{Yanagawa}.
\newblock {On ribbon 2-knots II: The second homotopy group of the complementary
  domain.}
\newblock {\em {Osaka J. Math.}}, 6:465--473, 1969.

\bibitem{yanagawa3}
T.~{Yanagawa}.
\newblock {On Ribbon 2-knots III: On the unknotting Ribbon 2-knots in $S\sp
  4$.}
\newblock {\em {Osaka J. Math.}}, 7:165--172, 1970.

\bibitem{ipipipyura}
E.~Yurasovskaya.
\newblock {\em Homotopy String Links Over Surfaces}.
\newblock Phd, University of British Columbia, 2008.

\end{thebibliography}

\end{document}